\definecolor{pingreen}{rgb}{0,39,14}
\crefname{section}{§}{§§}
\Crefname{section}{§}{§§}
\newtheorem*{rep@theorem}{\rep@title}
\newcommand{\newreptheorem}[2]{%
\newenvironment{rep#1}[1]{%
 \def\rep@title{#2 \ref{##1}}%
 \begin{rep@theorem}}%
 {\end{rep@theorem}}}
\newtheorem{theorem}{Theorem}[section]
\newtheorem{lemma}[theorem]{Lemma}
\newtheorem{remark}[theorem]{Remark}
\newtheorem{corollary}[theorem]{Corollary}
\newcommand{\R}{\mathbb{R}}
\newcommand{\Z}{\mathbb{Z}}
\newcommand{\Fcal}{\mathcal{F}}
\def\T{\mathbb{T}}
\def\N{\mathbb N}
\def\eps{\varepsilon}
\def\eps{\varepsilon}
\def\loc{\mathrm{loc}}
\def\d {\,\mathrm {d}}
\def\dx{\,\mathrm {d}x}
\def\dz{\,\mathrm {d}z}
\def\dt{\,\mathrm {d}t}
\def\dy{\,\mathrm {d}y}
\def\lin{\mathrm{lin}}
\def\de0#1{\rule[3pt]{#1}{0.4pt} \hspace{-0.1pt} \rule[3.05pt]{0.05pt}{0.4pt} \hspace{-0.1pt} \rule[3.1pt]{0.05pt}{0.4pt} \hspace{-0.1pt} \rule[3.15pt]{0.05pt}{0.4pt} \hspace{-0.1pt} \rule[3.2pt]{0.05pt}{0.4pt} \hspace{-0.1pt} \rule[3.25pt]{0.05pt}{0.4pt} \hspace{-0.1pt} \rule[3.3pt]{0.05pt}{0.4pt} \hspace{-0.1pt} \rule[3.35pt]{0.05pt}{0.4pt} \hspace{-0.1pt} \rule[3.4pt]{0.05pt}{0.4pt} \hspace{-0.1pt} \rule[3.45pt]{0.05pt}{0.4pt} \hspace{-0.1pt} \rule[3.5pt]{0.05pt}{0.4pt} \hspace{-0.1pt} \rule[3.55pt]{0.05pt}{0.4pt} \hspace{-0.1pt} \rule[3.6pt]{0.05pt}{0.4pt} \hspace{-0.1pt} \rule[3.65pt]{0.05pt}{0.4pt} \hspace{-0.1pt} \rule[3.7pt]{0.05pt}{0.4pt} \hspace{-0.1pt} \rule[3.75pt]{0.05pt}{0.4pt} \hspace{-0.1pt} \rule[3.8pt]{0.05pt}{0.4pt} \hspace{-0.1pt} \rule[3.85pt]{0.05pt}{0.4pt} \hspace{-0.1pt} \rule[3.9pt]{0.05pt}{0.4pt} \hspace{-0.1pt} \rule[3.95pt]{0.05pt}{0.4pt} \hspace{-0.1pt} \rule[4.0pt]{0.05pt}{0.4pt} \hspace{-0.1pt} \rule[4.05pt]{0.05pt}{0.4pt} \hspace{-0.1pt} \rule[4.1pt]{0.05pt}{0.4pt} \hspace{-0.1pt} \rule[4.15pt]{0.05pt}{0.4pt} \hspace{-0.1pt} \rule[4.2pt]{0.05pt}{0.4pt} \hspace{-0.1pt} \rule[4.25pt]{0.05pt}{0.4pt} \hspace{-0.1pt} \rule[4.3pt]{0.05pt}{0.4pt} \hspace{-0.1pt} \rule[4.35pt]{0.05pt}{0.4pt} \hspace{-0.1pt} \rule[4.4pt]{0.05pt}{0.4pt} \hspace{-0.1pt} \rule[4.45pt]{0.05pt}{0.4pt} \hspace{-0.1pt} \rule[4.5pt]{0.05pt}{0.4pt} \hspace{-0.1pt} \rule[4.55pt]{0.05pt}{0.4pt} \hspace{-0.1pt} \rule[4.6pt]{0.05pt}{0.4pt} \hspace{-0.1pt} \rule[4.65pt]{0.05pt}{0.4pt} \hspace{-0.1pt} \rule[4.7pt]{0.05pt}{0.4pt} \hspace{-0.1pt} \rule[4.75pt]{0.05pt}{0.4pt} \hspace{-0.1pt} \rule[4.8pt]{0.05pt}{0.4pt} \hspace{-0.1pt} \rule[4.85pt]{0.05pt}{0.4pt} \hspace{-0.1pt} \rule[4.9pt]{0.05pt}{0.4pt} \hspace{-0.1pt} \rule[4.95pt]{0.05pt}{0.4pt} \hspace{-0.1pt} \rule[5.0pt]{0.05pt}{0.4pt} \hspace{-0.1pt} \rule[5.05pt]{0.05pt}{0.4pt} \hspace{-0.1pt} \rule[5.1pt]{0.05pt}{0.4pt} \hspace{-0.1pt} \rule[5.15pt]{0.05pt}{0.4pt} \hspace{-0.1pt} \rule[5.2pt]{0.05pt}{0.4pt} \hspace{-0.1pt} \rule[5.25pt]{0.05pt}{0.4pt} \hspace{-0.1pt} \rule[5.3pt]{0.05pt}{0.4pt} \hspace{-0.1pt} \rule[5.35pt]{0.05pt}{0.4pt} \hspace{-0.1pt} \rule[5.4pt]{0.05pt}{0.4pt} \hspace{-0.1pt} \rule[5.45pt]{0.05pt}{0.4pt} \hspace{-0.1pt} \rule[5.5pt]{0.05pt}{0.4pt} \hspace{-0.1pt} \rule[5.55pt]{0.05pt}{0.4pt} \hspace{-0.1pt} \rule[5.6pt]{0.05pt}{0.4pt} \hspace{-0.1pt} \rule[5.65pt]{0.05pt}{0.4pt} \hspace{-0.1pt} \rule[5.7pt]{0.05pt}{0.4pt} \hspace{-0.1pt} \rule[5.75pt]{0.05pt}{0.4pt} \hspace{-0.1pt} \rule[5.8pt]{0.05pt}{0.4pt} \hspace{-0.1pt} \rule[5.85pt]{0.05pt}{0.4pt} \hspace{-0.1pt} \rule[5.9pt]{0.05pt}{0.4pt} \hspace{-0.1pt} \rule[5.95pt]{0.05pt}{0.4pt} \hspace{-0.1pt} \rule[6.0pt]{0.05pt}{0.4pt}}	
\numberwithin{equation}{section}
\author[1]{Sara Daneri\thanks{sara.daneri@gssi.it}}
\author[2]{Emanuela Radici\thanks{emanuela.radici@epfl.ch}}
\author[3]{Eris Runa\thanks{eris.runa@gmail.com}}
\affil[1]{Gran Sasso Science Institute, L'Aquila, Italy}
\affil[2]{Ecole Polytechnique F\'ed\'erale de Lausanne, Switzerland}
\affil[3]{Deutsche Bank AG, London, UK}
  \title{Deterministic particle approximation of aggregation-diffusion  equations on unbounded domains }
\begin{document}
  	
  	\maketitle

  \begin{abstract}
  	We consider a one-dimensional  aggregation-diffusion equation, which is the gradient flow in the Wasserstein space of a functional with competing  attractive-repulsive interactions.

  	We prove that the fully deterministic particle approximations with piecewise constant densities  introduced in~\cite{difrancesco-rosini} starting from  general bounded initial densities converge strongly in $L^1$ to bounded  weak solutions of the PDE. 
  	
  	In particular, the result is achieved in unbounded domains and for arbitrary nonnegative bounded initial densities, thus extending the results in \cite{GT, MO, MS} (in which a no-vacuum condition is required) and giving an alternative approach to \cite{CCP} in the one-dimensional case, including also subquadratic and superquadratic diffusions.

  \end{abstract}

  \section{Introduction}
  In this paper we consider  the following aggregation-diffusion equation
  \begin{equation}
  \label{eq:mainPDE}
  \partial_t \rho = \partial_x\big(\rho\partial_x (K \ast \rho) +\partial_x\phi(\rho)\big), \qquad t\in[0,T],\, x\in \R.
  \end{equation}

  By  $K$ we denote an interaction kernel satisfying the following conditions
  \begin{align}
  	&K(z)=K(|z|);\label{eq:k1}\\
  	&K\in C^0(\R)\cap C^2(\R\setminus\{0\});\label{eq:k2}\\
  	&K,\,K'\,\in L^\infty(\R),\, K'' \in L^{\infty} (\R \setminus \{0\});\label{eq:k3}\\
  	&\|K'\|_{L^1(\R)}<\infty,\label{eq:k4}
  \end{align}
and by $\phi(\rho)=\rho W'(\rho)-W(\rho)$ a $C^1$ function satisfying 
\begin{align}
&{W \geq 0}\label{eq:phi1}\\
&\phi(0)=0\label{eq:phi2}\\
&\phi\text{ strictly monotone increasing }\label{eq:phi3}\\
&\phi'(\rho)\rho\leq c_0\phi(\rho)\label{eq:phi4}\\ &\phi(\rho)\leq\max\{\rho, c_0W(\rho)\}\label{eq:phi5}
\end{align}
for some constant $c_0>0$ and there exist $c_1,c_2>0$ such that
\begin{equation}\label{eq:phirhocond}
\phi(\rho)\leq c_1\rho\qquad\text{if $\rho\leq \hat{\rho}<1$},\qquad\text{and} \qquad\phi(\rho)\geq c_2\rho\qquad\text{if $\rho\geq \bar{\rho}>1$}.
\end{equation} 

The above assumptions cover in particular the case 
\begin{equation}
\phi(\rho)=\rho^m,\qquad m\geq1.
\end{equation}

 The equation~\eqref{eq:mainPDE} can be seen as the gradient flow in the Wasserstein space  of  a nonlocal interaction functional with attractive-repulsive terms in competition (see~\cite{CDFS} for potentials $K$ admitting as in our case a Lipschitz singularity at the origin)
  \begin{equation}
  \label{eq:EcalBeta}
  \Fcal(\rho) :=
  \frac12\int_{\R}\int_{\R} K(x-y) \rho(x)\rho(y)\dx\dy +\int_{\R}W(\rho)\dx. 
  \end{equation}
  
Notice our assumptions on $K$ do not require any particular attractive or repulsive behaviour. In particular, $K$ might be attractive at short range and repulsive at long range, with the nonlinear term $W$ acting as a repulsive term for large enough densities. An example for $K$ is the attractive-repulsive double Yukawa kernel considered in~\cite{DR2}, which is used in biology to model pattern formation in colloidal systems 
\begin{equation}\label{eq:Yuk}
K(z)=-\beta^2e^{-\beta |z|}+e^{-|z|},\quad \beta\gg1.
\end{equation}

In the case in which $\phi(\rho)=\rho^m$, $m>0$ and the kernel is of the form \eqref{eq:Yuk} (with no specific assumptions on the parameter $\beta$), global in time existence of bounded solutions to~\eqref{eq:mainPDE} was proved in~\cite{S}.

For equations without diffusion (i.e., $\phi=0$), the  local well-posedness with possibly finite blow-up of bounded solutions and long-time behaviour in case of global existence  was extensively studied in several papers, among which we recall~\cite{LT, L, BV, BCL, BL}. In \cite{CDFS} the authors show that in the case of $\lambda$-convex kernels, thus in particular under our assumptions, global weak measure solutions, allowing for concentration, exist and are unique.

In~\cite{kim-yao, craig, CKY, CT} in place of the diffusion, the repulsive effect is caused by an hard constraint on the density.
Global minimizers of the functional~\eqref{eq:EcalBeta} for purely attractive kernels and power law diffusions in the aggregation-dominated regime  have been studied in~\cite{bedrossian}.  
Stationary states and large-time behaviour of~\eqref{eq:mainPDE} for smooth short-range attractive kernels and quadratic diffusion have been extensively studied in~\cite{BDF, BDFF,DFY}. Uniqueness of solutions for smooth kernels was adressed in~\cite{BS}. 
In this paper the kernel $K$  might be instead only Lipschitz in the origin (in particular, $K''$ be only  a measure in the origin),  as in~\eqref{eq:Yuk}. In the case of purely attractive Yukawa kernels (also known as Morse kernels) an explicit formula for stationary solutions was derived in~\cite{DFY}.

In~\cite{CT} the authors show that the slow diffusion limit of the gradient flow of~\eqref{eq:EcalBeta} when $\phi(\rho)=\rho^m$ is given by the gradient flow of the functional~\eqref{eq:EcalBeta} with $W=0$ and an hard constraint on the density.  

For a recent review on the topic see~\cite{CCY}. 

In the case of the kernel with competing attractive-repulsive terms defined in~\eqref{eq:Yuk}, the $\Gamma$-limit of such a functional as $\beta\to+\infty$ (namely with local attractive term) has been characterized in~\cite{DR2}. 
In suitable regimes,  minimizers of the limit functionals have been proved to be given by periodic unions of stripes (i.e., intervals in one dimension,  see~\cite{DR2}) with techniques developed in~\cite{GR, DR1,  DR2, DKR, DR3}. For characterization of minimizers with power law attractive-repulsive potentials see~\cite{ChTo}.

\vskip 0.2 cm

In this paper we are interested in the well-posedness and convergence of a deterministic moving particle scheme approximation with piecewise constant densities for the aggregation-diffusion equation~\eqref{eq:mainPDE}. 

In absence of the diffusion term, in \cite{CCH} the authors give a deterministic approximation of the nonlocal PDE with empirical distributions weakly converging in the sense of measures, valid not only for our class of kernels but also for more singular kernels and in general dimension.

A deterministic approximation approach for linear and  nonlinear diffusion equations in which the diffusion operator is replaced by a nearest neighbour interaction term was introduced in~\cite{R1,R2}, mainly with numerical purpose.  In \cite{GT} the authors provide a deterministic particle scheme for \eqref{eq:mainPDE} in the case in which $K$ is convex and smooth and the initial density has no vacuum zones. In this case, they show pointwise convergence to a solution of the limit problem. In \cite{MO} and \cite{MS} the authors provide another particle approximation analogous  to the one used in our paper, with assumptions on the initial density similar to \cite{GT}. 

The deterministic particle approximation that we use in this paper was used in~\cite{difrancesco-rosini} for nonlinear scalar conservation laws of the type

\begin{equation}
   \label{eq:x1}
   \begin{split}
      \partial_t\rho+\partial_x(\rho v(\rho))=0,
   \end{split}
\end{equation}

with $v\in C^1([0,+\infty))$ and strictly decreasing.

If $v$ in~\eqref{eq:x1} is allowed to depend  both locally and non-locally on $\rho$ the derivation of such system of ODEs as microscopic Lagrangian formulation of the nonlinear scalar conservation law has been investigated in~\cite{piccoli-rossi} for Lipschitz continuous velocities. 

In~\cite{DFR} and~\cite{fagioli-radici-1} the authors use this approximation  in the setting of aggregation (resp. aggregation/diffusion) equations with nonlinear mobilities, namely for equations of the form 
\begin{equation}\label{eq:1.10}
\partial_t\rho=\partial_x(\mathcal M(\rho)\partial_x(K\ast\rho+a(\rho))).
\end{equation}
with $\mathcal M(\rho)=\rho v(\rho)$ with $v$ monotone decreasing and compactly supported. 

In presence of the diffusion term and assuming in addition that the initial density function is supported on an interval and bounded from below by a positive constant in its interior (i.e. the so-called no vacuum condition) in~\cite{fagioli-radici-1} the authors are able to prove $L^\infty$ bounds for the deterministic particle approximation and strong convergence to solutions of~\eqref{eq:1.10} on a bounded interval, with zero velocity boundary conditions.

 More recently, in~\cite{fagioli-radici-2}, the same deterministic scheme was used in the different context of opinion dynamics for \emph{space}-dependent mobilities {but still the argument is performed in bounded domains and with the no vacuum condition. }

In \cite{CCP} the authors introduce a general deterministic particle approximation (the so-called \emph{blob method}) for solutions of the multi-dimensional analogue of the PDE \eqref{eq:mainPDE} when $\phi(\rho)=\rho^m$ and $K$ is $C^1$, semiconvex and with at most quadratic growth. In particular, they provide sufficient regularity conditions on the approximation scheme that, if valid, give weak convergence of the approximations to the solutions of the PDE. Such conditions are proved to be valid if  $\phi(\rho)=\rho^m$ with $m=2$ and the kernel is $C^2$ on the whole space with uniformly bounded second derivatives, hence giving  weak convergence of the scheme.

\vskip 0.2 cm
Our approach provides a deterministic particle approximation for  arbitrary bounded and integrable initial densities in unbounded domains, extending thus the results of \cite{GT, MO, MS}. Our approximation is alternative in one space dimension to the one proposed in \cite{CCP} and moreover gives strong convergence of the scheme also for subquadratic and superquadratic  diffusions, not included in \cite{CCP}.
  
First of all we consider piecewise constant deterministic particle approximations to the PDE \eqref{eq:mainPDE} on the one-dimensional torus $\T_L=\R/L\Z$ for $L\gg1$
\begin{equation}\label{eq:mainPDETL}
 \partial_t \rho = \partial_x\big(\rho\partial_x (K \ast \rho) +\partial_x\phi(\rho)\big), \qquad x\in \T_L.
\end{equation}
 For those approximations  we prove global $L^\infty$ bounds (which are necessary so that such a scheme is well-defined)  independent on the number of particles and the size of the torus.  
 
 Basing on a discrete $W^{1,2}$ estimate on the nonlinear term we provide compactness estimates and get $L^1$-convergence to bounded weak solutions of~\eqref{eq:mainPDETL} on $\T_L$.  
 
 The advantage of our approach is that the estimates needed for strong compactness are independent of the size of the torus $L$ and can then be used to pass to the limit as $L\to\infty$ and get strong convergence to solutions to the PDE \eqref{eq:mainPDE}. Moreover, all the compactness estimates are independent of any lower bound on the density. Such a lower bound is only used to ensure that the limit densities obtained in the first limit as $N\to\infty$ on a fixed torus are indeed solutions of the PDE \eqref{eq:mainPDETL} and can be guaranteed by an approximation of the initial density with positive functions of uniformly bounded energy.

  Our main results are the following.

\begin{theorem}\label{thm:existenceapprox}[$L^\infty$ bound]\label{eq:prop:linfty}
	Let $\rho_{0,L}\in L^\infty(\T_L)$. Then, there exist constants $C(\Fcal_L(\rho_{0,L}),K, \|\rho_{0,L}\|_{L^\infty})$ and $ C(K,\Fcal_L(\rho_{0,L}))$ such that the following holds. For every $T>0$ there exists $\bar N=\bar N(L,T,\Fcal_L(\rho_{0,L}))$ s.t. the deterministic particle approximations $\{\rho^N_L\}_{N\geq\bar N}$ starting from $\rho_{0,L}$ are well-defined on $[0,T]$ and moreover
	\begin{equation}
	\sup_{N\geq\bar N}\sup_{t\in[0,T)}\|\rho^N_L(t)\|_{L^\infty(\T_L)}\leq C(\Fcal_L(\rho_{0,L}),K, \|\rho_{0,L}\|_{L^\infty},c_2)+ C(K,\Fcal_L(\rho_{0,L}),c_2)(1+T).
	\end{equation}
\end{theorem}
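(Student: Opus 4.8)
The plan is to pass to Lagrangian (mass) coordinates and track the largest of the piecewise--constant density values along the particle flow, showing it can increase at most linearly in time. Recall that the scheme fixes a mass quantum $m=M/N$ (with $M=\int\rho_{0,L}$), places particles $x_0(t)\le\cdots\le x_N(t)$ on $\T_L$ with mass $m$ between consecutive ones, sets $R_i:=x_{i+1}-x_i$ and $\rho_i:=m/R_i$ (so $\rho^N_L=\rho_i$ on $(x_i,x_{i+1})$), and lets the $x_i$ evolve by the ODE system discretizing the Lagrangian form $\partial_t X=-(K'\ast\rho)(X)-\partial_m\phi(1/\partial_m X)$ of \eqref{eq:mainPDETL}. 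Two remarks reduce the statement to an a priori estimate: the equal--mass construction gives $\|\rho^N_L(0)\|_{L^\infty(\T_L)}\le\|\rho_{0,L}\|_{L^\infty(\T_L)}$ by the mean value inequality, and the ODE system is well posed as long as $\min_i R_i>0$, i.e.\ as long as $\|\rho^N_L\|_{L^\infty}$ stays finite. Finally, since $K$ is bounded and $W\ge0$ by \eqref{eq:phi1}, the fact that the scheme does not increase the energy \eqref{eq:EcalBeta} yields, uniformly in $N$ and $L$,
\begin{equation*}
\int_{\T_L}W(\rho^N_L(t))\dx\le \Fcal_L(\rho_{0,L})+\tfrac12\|K\|_{L^\infty}M^2=:E'.
\end{equation*}

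Differentiating $R_i$ along the flow one gets
\begin{equation*}
\dot R_i=-\big[(K'\ast\rho^N_L)(x_{i+1})-(K'\ast\rho^N_L)(x_i)\big]+\frac{2\phi(\rho_i)-\phi(\rho_{i-1})-\phi(\rho_{i+1})}{m},
\end{equation*}
and I would record two facts. First, by \eqref{eq:k3}, separating the contribution of $y\in(x_i,x_{i+1})$ (total mass $m$, where $K'$ may jump) from the rest (where $K'$ is $C^1$ with $|K''|\le\|K''\|_{L^\infty(\R\setminus\{0\})}$ between $x_i-y$ and $x_{i+1}-y$), one finds that the "singular" contribution equals exactly $2\rho_i\big(K(R_i)-K(0)\big)$, so that
\begin{equation*}
\big|(K'\ast\rho^N_L)(x_{i+1})-(K'\ast\rho^N_L)(x_i)\big|\le \|K''\|_{L^\infty(\R\setminus\{0\})}\,M\,R_i+2\,\|K'\|_{L^\infty(\R)}\,m .
\end{equation*}
Second, at any index $i$ realizing $\rho_i=\max_j\rho_j$ one has $\phi(\rho_{i\pm1})\le\phi(\rho_i)$ by monotonicity \eqref{eq:phi3}, so the discrete diffusion term is $\ge0$: nonlinear diffusion never destabilizes the maximal density, a discrete maximum principle. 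Combining the two gives $\dot R_i\ge-\|K''\|_{L^\infty}MR_i-2\|K'\|_{L^\infty}m$ at such $i$, hence (using $\dot\rho_i=-\rho_i^2\dot R_i/m$) the crude estimate $\tfrac{d}{dt}\max_j\rho_j\lesssim(\max_j\rho_j)^2$. This is \emph{not enough} --- it only precludes blow--up before a finite time --- so the proof must use the diffusion term \emph{quantitatively}, and this is its heart.

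The difficulty is genuine only when the maximal density is attained on a cluster of several adjacent intervals of nearly equal (small) length, where the discrete second difference of $\phi(\rho)$ degenerates in the interior of the cluster. The natural way to deal with this is by telescoping: for a threshold $\lambda$, let $B=\{i_0,\dots,i_1\}$ be the maximal block of consecutive indices with $\rho_j>\lambda$ that contains a maximizer. Summing the identity for $\dot R_i$ over $j\in B$, the diffusion contributions telescope to $\big([\phi(\rho_{i_0})-\phi(\rho_{i_0-1})]+[\phi(\rho_{i_1})-\phi(\rho_{i_1+1})]\big)/m\ge0$ (since $\rho_{i_0}>\lambda\ge\rho_{i_0-1}$ and likewise at $i_1$), while the interaction contributions are bounded by $\|K''\|_{L^\infty}M\sum_{j\in B}R_j+2\|K'\|_{L^\infty}m$. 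To make the $\phi$--drops at $\partial B$ (possibly after summing such blocks over a range of thresholds $\lambda$) dominate the residual $O(m)$ interaction term I would use: (i) the energy bound above --- since $W$ is non--decreasing ($W$ is convex with $W(0)=0$ and $W\ge0$, forcing $W'\ge0$) and $\phi(\rho)\ge c_2\rho$ for $\rho\ge\bar\rho$ by \eqref{eq:phirhocond}, together with \eqref{eq:phi4}, the total length $\sum_{j\in B}R_j$ and the cardinality of $B$ are controlled in terms of $E'$, $c_2$ and $\max_j\rho_j$; and (ii) the superlinearity of $\phi$, via \eqref{eq:phirhocond}--\eqref{eq:phi4}, to convert those $\phi$--drops into a quantity that, once $\max_j\rho_j$ exceeds a threshold $\Lambda_0=\Lambda_0(K,\Fcal_L(\rho_{0,L}),c_2)$, outweighs the interaction for a suitable choice of $\lambda$. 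The outcome is a differential inequality $\tfrac{d}{dt}\|\rho^N_L(t)\|_{L^\infty}\le C(K,\Fcal_L(\rho_{0,L}),c_2)$ valid whenever $\|\rho^N_L(t)\|_{L^\infty}\ge\Lambda_0$; integrating it via the standard first--exceedance--time argument (using that $t\mapsto\|\rho^N_L(t)\|_{L^\infty}=m/\min_i R_i(t)$ is locally Lipschitz) gives
\begin{equation*}
\|\rho^N_L(t)\|_{L^\infty(\T_L)}\le\max\{\|\rho_{0,L}\|_{L^\infty},\Lambda_0\}+C\,t ,
\end{equation*}
which is the claimed bound. This a priori estimate forbids collisions, so the scheme is well defined on $[0,T]$; the dependence $\bar N=\bar N(L,T,\Fcal_L(\rho_{0,L}))$ enters because one runs the argument as a bootstrap --- first assume $\|\rho^N_L\|_{L^\infty}\le C(1+T)$ on $[0,T]$, so that the ODEs are defined and the $O(m)$ error terms above are negligible once $N$ is large depending on that provisional bound, then close the loop. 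The main obstacle is precisely the plateau/spike case: the crude maximum principle degenerates there, and one must combine the telescoped diffusion, the energy bound on super--level sets, and the quantitative superlinearity \eqref{eq:phirhocond}--\eqref{eq:phi4} of $\phi$ to recover a bound that only grows linearly in time.
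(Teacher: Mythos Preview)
Your attempt via a discrete maximum principle correctly identifies the obstruction --- plateaus --- but the telescoping fix does not close the gap. Summing $\dot R_j$ over the block $B=\{i_0,\dots,i_1\}$ yields a lower bound on $\tfrac{d}{dt}\sum_{j\in B}R_j$, the rate of change of the \emph{total length} of the high-density region. This is the wrong quantity: controlling $\|\rho^N_L\|_{L^\infty}$ requires controlling $\min_j R_j$, and nothing in your argument prevents a single interval inside $B$ from collapsing while its neighbours expand so that $\sum_{j\in B}R_j$ stays bounded below. The claimed differential inequality $\tfrac{d}{dt}\|\rho^N_L\|_{L^\infty}\le C$ therefore does not follow from what you wrote. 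Two smaller points: after telescoping the interaction over $B$, the singular contribution is $2\|K'\|_{L^\infty}\,|B|\,m$ (the origin of $K'$ is crossed for every $y$ in the block, total mass $|B|m$), not $2\|K'\|_{L^\infty}m$; and the scheme is \emph{not} exactly energy-decreasing --- by Lemma~\ref{lemma:decrease} the energy can grow by $O(L/\sqrt N)$ per unit time, so $\bar N$ must already depend on $L,T$ for the energy bound you invoke, not only through a bootstrap.

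The paper avoids any pointwise maximum principle and instead tracks the global discrete $W^{1,2}$-type quantity $\tfrac{N}{c_L}\sum_k|\phi(\rho_{k+1})-\phi(\rho_k)|^2$ in mass coordinates. Three ingredients: (i) this quantity bounds $\|\phi(\rho^N_L)\|_{L^\infty}$ via a one-line Poincar\'e argument (Lemma~\ref{lemma:w12linfty}: there is always some $k$ with $\rho_k\le c_L/L$, and \eqref{eq:tvtv2ineq} converts the $\ell^2$ increment sum into a total-variation bound); (ii) its time integral over $[0,T]$ is controlled by the energy dissipation formula \eqref{eq:ender}, giving Lemma~\ref{lemma:w12estint}; and (iii) it satisfies its own Gronwall inequality (Lemma~\ref{lemma:w12estder}), so that once bounded at a single time $t_0^N$ --- which (ii) guarantees by averaging over the local-existence interval $[0,T_0)$ of Lemma~\ref{lemma:mininterv} --- it grows at most linearly thereafter (Corollary~\ref{cor:w12est}). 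Chaining (iii)$\to$(i) gives the linear-in-$T$ bound on $\|\phi(\rho^N_L)\|_{L^\infty}$, and \eqref{eq:phirhocond} converts it to $\|\rho^N_L\|_{L^\infty}$. The key conceptual difference is that the $W^{1,2}$ quantity is global and its evolution is tied directly to the energy dissipation, so no control at a maximizer is ever needed.
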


  As $N\to+\infty$ we have the following
  
  \begin{theorem}
  	\label{thm:convlfixed}
  	Let $\rho_{0,L}\in L^\infty(\T_L)$ be a bounded function satisfying
  	\begin{equation}
  	\label{eq:lowboundl}
  	\inf_{\T_L}\rho_{0,L}\geq\eps_L>0.
  	\end{equation}
  	Then, for every $T>0$ the deterministic particle approximations $\{\rho^N_L\}_{N\geq\bar N}$ constructed from $\rho_{0,L}$ in Theorem~\ref{thm:existenceapprox} converge (up to subsequences) in $L^1([0,T]\times\T_L)$ to a bounded probability density $\rho_L$ satisfying~\eqref{eq:mainPDE} on $\T_L$.
  \end{theorem}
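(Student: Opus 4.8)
The plan is to pass to the limit $N\to\infty$ in the deterministic particle scheme on the fixed torus $\T_L$, using the uniform $L^\infty$ bound of Theorem~\ref{thm:existenceapprox} together with a discrete $W^{1,2}$-type estimate on the nonlinear term. First I would record the basic structure of the scheme: the particles $x_1^N(t)<\dots<x_N^N(t)$ evolve by a system of ODEs, and the associated piecewise constant density $\rho^N_L(t,\cdot)$ has mass $1/N$ on each interval $[x_i^N,x_{i+1}^N)$, so $\rho_i^N=\frac{1}{N(x_{i+1}^N-x_i^N)}$. From the energy-dissipation inequality for the scheme (the discrete analogue of the Wasserstein gradient-flow structure of $\Fcal_L$) one gets a uniform bound on $\int_0^T \sum_i (\dot x_i^N)^2 \,\frac1N\,dt$, i.e. a uniform bound on the discrete dissipation, which is the discrete $W^{1,2}$ estimate alluded to in the introduction; combined with the $L^\infty$ bound this should yield equi-integrability in space and the compactness needed to extract an $L^1([0,T]\times\T_L)$-convergent subsequence $\rho^N_L\to\rho_L$, with $0\le\rho_L\le C$. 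One standard way to make this rigorous is via an Aubin–Lions / Kružkov-type argument: space regularity from the dissipation bound (control of $\|\phi(\rho^N_L(t))\|$ in a discrete $W^{1,2}$), time regularity from the ODEs giving a uniform modulus of continuity of $t\mapsto X^N(t)$ (the pseudo-inverse / monotone rearrangement), and then upgrade to strong $L^1$ convergence of the densities.

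Next I would identify the limit as a weak solution. Here the role of the lower bound \eqref{eq:lowboundl} enters: since $\inf_{\T_L}\rho_{0,L}\ge\eps_L>0$, a comparison/minimum-principle argument at the discrete level (the closest particles cannot collide too fast because the repulsive diffusion term $\partial_x\phi(\rho)$ pushes them apart, controlled by $\|K'\|_{L^\infty}$) should give a lower bound $\rho^N_L(t,\cdot)\ge\eps_L(t)>0$ on $[0,T]$, uniform in $N$ for $N\ge\bar N$. This prevents vacuum from forming and ensures that in the discrete continuity equation the flux term behaves well, so that one can pass to the limit in the weak formulation: test \eqref{eq:mainPDETL} against $\psi\in C^\infty_c([0,T)\times\T_L)$, write the discrete identity $\frac{d}{dt}\int\rho^N_L\psi = \int \rho^N_L\,\dot x\text{-flux}\cdots$, and use the strong $L^1$ convergence of $\rho^N_L$ (hence of $K*\rho^N_L$ and, by continuity of $\phi$ and the $L^\infty$ bound, of $\phi(\rho^N_L)$ up to passing to the limit in the appropriate sense) to identify each term. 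The lower bound guarantees that the discrete velocities $v_i^N = -\partial_x(K*\rho^N_L) - \partial_x\phi(\rho^N_L)/\rho^N_L$ evaluated along the scheme converge to the continuum flux $\rho_L\partial_x(K*\rho_L)+\partial_x\phi(\rho_L)$ in the sense of distributions.

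The main obstacle I expect is the passage to the limit in the nonlinear diffusion term $\partial_x\phi(\rho^N_L)$: the scheme approximates this by discrete differences of $\phi(\rho_i^N)$ across particles, and one must show that these converge to the distributional derivative $\partial_x\phi(\rho_L)$. The discrete $W^{1,2}$ estimate should give weak compactness of the discrete gradient of $\phi(\rho^N_L)$ in $L^2$, but matching the weak limit with $\partial_x\phi(\rho_L)$ requires the strong $L^1$ (or a.e.) convergence of $\rho^N_L$ together with the continuity and monotonicity of $\phi$ — a Minty-type or div-curl argument. A secondary technical point is quantifying $\eps_L(t)$ and $\bar N$ so that the lower bound survives the limit; this is where \eqref{eq:phirhocond} (giving $\phi(\rho)\ge c_2\rho$ for large $\rho$, and linear behaviour for small $\rho$) and the $L^1$ bound $\|K'\|_{L^1}<\infty$ are used to control how fast neighbouring particles can approach. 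Once these two points are settled, collecting the limits in the weak formulation yields that $\rho_L$ is a bounded weak solution of \eqref{eq:mainPDE} on $\T_L$, which is the claim (noting $\rho_L$ remains a probability density by conservation of mass along the scheme).
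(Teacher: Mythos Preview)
Your plan is essentially the paper's approach: energy dissipation gives a discrete $W^{1,2}$ bound on $\phi(\rho^N_L)$, this plus the $L^\infty$ bound feeds a generalized Aubin--Lions argument (space regularity from TV of $\phi(\rho^N_L)$, time regularity from H\"older-$\tfrac12$ continuity of $t\mapsto \rho^N_L(t)$ in $W_1$), yielding strong $L^1$ convergence of $\phi(\rho^N_L)$ and hence of $\rho^N_L$; then one passes to the limit in the weak formulation term by term.

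Two small corrections. First, your parenthetical explanation of the lower bound is backwards: ``closest particles cannot collide'' is the mechanism for the \emph{upper} bound on $\rho^N_L$ (Theorem~\ref{thm:existenceapprox}). The lower bound $\rho^N_L\ge \eps_L(t)$---equivalently, $\max_k(x_{k+1}-x_k)\le C(\eps_L,K,T)/N$---comes from looking at the \emph{largest} gap: there $\rho_k$ is minimal, so $\phi(\rho_{k+1})+\phi(\rho_{k-1})-2\phi(\rho_k)\ge 0$, the diffusion contribution to $\dot x_{k+1}-\dot x_k$ is nonpositive, and a Gronwall argument using only $\|K'\|_{L^\infty},\|K''\|_{L^\infty}$ closes. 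Second, no Minty or div--curl is needed for the diffusion term. Once you have strong $L^1$ convergence of $\phi(\rho^N_L)$, integrate by parts once more so the main term becomes $\int\phi(\rho^N_L)\,\partial_{xx}\varphi$, which passes to the limit directly; the remainder terms are controlled by $\int_0^T TV(\phi(\rho^N_L(t)))\,dt\cdot \max_k(x_{k+1}(t)-x_k(t))$, and it is precisely here that the $O(1/N)$ bound on the maximal gap (hence the hypothesis $\rho_{0,L}\ge\eps_L>0$) is used.
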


Now identify with a slight abuse of notation functions defined on $\T_L$ with $L$-periodic functions on $\R$.
We have the following

\begin{theorem}
	\label{thm:convinl}
Consider initial data $\rho_0\in L^1(\R)\cap L^\infty(\R)$ with $\rho_0>0$ on $\R$ and $\int_{\R}|x|\rho_0(x)\dx<+\infty$ and define $\rho_{0,L}$ by cutting $\rho_0$ on $[-L/2,L/2)$ and extending it periodically. Then the functions $\rho_L\chi_{[-L/2,L/2)}$, being $\rho_L$ limits of the deterministic particle approximations starting from $\rho_{0,L}$ on $\T_L$ found in Theorem~\ref{thm:convlfixed}, converge (up to subsequences) in $L^1([0,T]\times\R)$ to a weak solution $\rho$ of the PDE~\eqref{eq:mainPDE} on $\R$.
\end{theorem}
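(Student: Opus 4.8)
\emph{Proof plan.} The aim is to pass to the limit $L\to\infty$ in the family $\{\rho_L\}$ of periodic weak solutions supplied by Theorem~\ref{thm:convlfixed}, each regarded as an $L$-periodic function on $\R$. Note first that although $K$ need not be integrable, $\partial_x(K\ast\rho_L)=K'\ast\rho_L$ is well defined for such a profile, since $K'\in L^1(\R)$ by \eqref{eq:k4} and $\rho_L$ is bounded; it is $L$-periodic and agrees with the periodized convolution. Hence, for any $\varphi\in C_c^\infty([0,T)\times\R)$ and any $L$ with $\mathrm{supp}\,\varphi\subset[0,T)\times(-L/2,L/2)$, extending $\varphi$ $L$-periodically shows that $\rho_L$ satisfies the weak formulation of \eqref{eq:mainPDE} on $\R$ tested against $\varphi$, i.e.
\[
\int_0^T\!\!\int_\R \rho_L\,\partial_t\varphi\dx\dt+\int_\R\rho_0\,\varphi(0,\cdot)\dx=\int_0^T\!\!\int_\R\big(\phi(\rho_L)\,\partial_{xx}\varphi-\rho_L\,(K'\ast\rho_L)\,\partial_x\varphi\big)\dx\dt .
\]

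\textbf{Step 1 ($L$-uniform bounds and local compactness).} By \eqref{eq:k3} and conservation of mass the interaction part of $\Fcal_L(\rho_{0,L})$ is at most $\tfrac12\|K\|_{L^\infty}\|\rho_0\|_{L^1(\R)}^2$, while the internal-energy part equals $\int_{-L/2}^{L/2}W(\rho_0)\dx\le\int_\R W(\rho_0)\dx$, finite under the standing assumptions since $\rho_0\in L^1\cap L^\infty$; hence $\sup_{L}|\Fcal_L(\rho_{0,L})|<\infty$. Inserting this into Theorem~\ref{thm:existenceapprox}, and passing the $L^\infty$ bound through the limit $N\to\infty$ of Theorem~\ref{thm:convlfixed} (a.e.\ convergence along a subsequence), gives
\[
\sup_{L\ge L_0}\ \sup_{t\in[0,T)}\ \|\rho_L(t)\|_{L^\infty(\T_L)}\ \le\ C(T,K,\rho_0).
\]
Moreover the discrete $W^{1,2}$-in-space estimate on the nonlinear term $\phi$ and the time-equicontinuity estimate underlying Theorem~\ref{thm:convlfixed} are, by construction, independent of $L$. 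Combined with the uniform $L^\infty$ bound, a standard space-time compactness argument (Aubin--Lions / Riesz--Kolmogorov type) makes $\{\rho_L\chi_{[-L/2,L/2)}\}_L$ precompact in $L^1_{\mathrm{loc}}([0,T]\times\R)$; extract $L_k\to\infty$ with $\rho_{L_k}\chi_{[-L_k/2,L_k/2)}\to\rho$ in $L^1_{\mathrm{loc}}$ and a.e., $\rho\in L^\infty([0,T]\times\R)$.

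\textbf{Step 2 (tightness).} To upgrade to convergence in the full $L^1([0,T]\times\R)$ I would test the weak formulation on $\T_L$ against a time-independent cutoff $\psi_M$ which equals $\sqrt{1+x^2}$ on $[-M,M]$, is nondecreasing in $|x|$, is flat (hence a bounded, admissible $L$-periodic test function) before $\pm L/2$, and satisfies $0\le\psi_M'\le 1$, $\psi_M\le\sqrt{1+x^2}$ and $\|\psi_M''\|_{L^1(\T_L)}\le C$ with $C$ independent of $M,L$. Then
\[
\tfrac{d}{dt}\!\int_{\T_L}\!\psi_M\rho_L=\int_{\T_L}\!\psi_M''\,\phi(\rho_L)-\int_{\T_L}\!\psi_M'\,\rho_L\,(K'\ast\rho_L),
\]
whose right-hand side is $\le\|\psi_M''\|_{L^1}\phi(\|\rho_L\|_{L^\infty})+\|K'\|_{L^\infty}\|\rho_0\|_{L^1}\!\int_{\T_L}\rho_L\le C(T,K,\rho_0)$, uniformly in $M,L,t$. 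Integrating in $t$, using $\psi_M\le\sqrt{1+x^2}$ and $\int_\R\sqrt{1+x^2}\,\rho_0\dx<\infty$, and letting $M\uparrow L/2$ yields $\sup_L\sup_{t\in[0,T]}\int_{\R}\sqrt{1+x^2}\,\rho_L(t)\chi_{[-L/2,L/2)}\dx<\infty$, hence $\sup_L\int_{|x|>R}\rho_L(t)\chi_{[-L/2,L/2)}\dx\to0$ uniformly as $R\to\infty$. With Step~1 this gives $\rho_{L_k}\chi_{[-L_k/2,L_k/2)}\to\rho$ in $L^1([0,T]\times\R)$, with $\rho(t,\cdot)\ge0$ carrying the mass of $\rho_0$ and having finite first moment; arguing likewise at $t=0$, where $\rho_{0,L}\chi_{[-L/2,L/2)}\to\rho_0$ in $L^1(\R)$, identifies the initial datum.

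\textbf{Step 3 (passage to the limit; main obstacle).} In the weak formulation, $\int\!\!\int\rho_{L_k}\partial_t\varphi\to\int\!\!\int\rho\,\partial_t\varphi$ by Step~2, and $\int\!\!\int\phi(\rho_{L_k})\partial_{xx}\varphi\to\int\!\!\int\phi(\rho)\partial_{xx}\varphi$ since $\phi$ is continuous, $\{\rho_{L_k}\}$ is uniformly bounded, and $\partial_{xx}\varphi$ has compact support. The delicate term is the nonlocal one, and the key point is to show $K'\ast\rho_{L_k}\to K'\ast\rho$ uniformly on $\mathrm{supp}\,\varphi$. Writing, for $x$ in the compact $x$-projection $[-M,M]$ of $\mathrm{supp}\,\varphi$,
\[
(K'\ast\rho_{L_k})(x)=\int_{-L_k/2}^{L_k/2}\!\!K'(x-y)\,\rho_{L_k}(y)\dy+\sum_{j\neq0}\int_{-L_k/2}^{L_k/2}\!\!K'(x-y-jL_k)\,\rho_{L_k}(y)\dy ,
\]
the first integral converges to $(K'\ast\rho)(x)$ because $\|K'\|_{L^\infty}\|\rho_{L_k}\chi_{[-L_k/2,L_k/2)}-\rho\|_{L^1}\to0$ and $\|K'\|_{L^\infty}\!\int_{|y|>L_k/2}\rho\to0$, while the periodic tails are bounded by $\sup_k\|\rho_{L_k}\|_{L^\infty}\!\int_{|z|>L_k/2-M}|K'(z)|\dz\to0$ using \eqref{eq:k4}. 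Together with the $L^1$ convergence and the uniform $L^\infty$ bound, $\rho_{L_k}(K'\ast\rho_{L_k})\partial_x\varphi\to\rho(K'\ast\rho)\partial_x\varphi$ in $L^1$, so $\rho$ is a bounded weak solution of \eqref{eq:mainPDE} on $\R$ with datum $\rho_0$. The two points I expect to be genuinely delicate are precisely the $L$-uniform tightness of Step~2 — where the finite-first-moment hypothesis on $\rho_0$ and the integrability $\|K'\|_{L^1}<\infty$ are what make the moment estimate close — and the control of the periodized convolution above; everything else is routine once the $L$-independent compactness already available on the torus is in hand.
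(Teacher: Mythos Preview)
Your proposal is correct and follows essentially the same route as the paper: exploit the $L$-uniform a priori bounds (energy, $L^\infty$, discrete $W^{1,2}$ on $\phi$, time-equicontinuity) to obtain strong $L^1$ compactness of $\{\rho_L\chi_{[-L/2,L/2)}\}_L$, then pass to the limit in the weak formulation, the only nontrivial term being the nonlocal one. Your handling of Step~3 is equivalent to the paper's: the paper splits $K'\ast(\rho_L-\hat\rho_L)$ into a far piece $J_1^L$ controlled by $\int_{|z|>L/4}|K'|$ and a near piece $J_2^L$ controlled by tightness of $\hat\rho_L$; your periodic-images decomposition is just another way to organize the same estimate, and both rely on \eqref{eq:k4}.

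The one substantive difference is Step~2. The paper does \emph{not} obtain tightness by a moment computation on $\rho_L$; instead it uses the Wasserstein--H\"older estimate of Lemma~\ref{lemma:g}, proved at the discrete level and uniform in $N,L$, which is then built directly into the coercive functional $\mathcal G$ of the Aubin--Lions argument (Remark~\ref{rem:lambda} and Theorem~\ref{thm:compactness2}). Your route---testing the limit PDE on $\T_L$ against a Lipschitz weight $\psi_M$---is a legitimate alternative, with two small points of care: (i) the weak formulation in the paper is stated for $\varphi\in C^\infty_c((0,T)\times\T_L)$, so making sense of $\tfrac{d}{dt}\int\psi_M\rho_L$ and recovering the value at $t=0$ requires the $W_1$-continuity of $t\mapsto\rho_L(t)$, which does follow from Lemma~\ref{lemma:g} in the limit $N\to\infty$; (ii) ``letting $M\uparrow L/2$'' is not quite the right way to phrase it---what your estimate actually gives is $\int_{|x|>M}\rho_L(t)\chi_{[-L/2,L/2)}\le C/\sqrt{1+M^2}$ uniformly in $L$ (using that $\psi_M\ge\sqrt{1+M^2}$ on $\{|x|>M\}$), which is exactly tightness. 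Your approach is more self-contained for this step; the paper's has the structural advantage that the same Wasserstein estimate already drives the time-compactness, so no separate argument is needed.
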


Finally, approximating a general nonnegative initial datum $\rho_0$ with strictly positive initial data, we obtain the following

\begin{theorem}\label{thm:convfinal}
Let $\rho_{0}\in L^1(\R)\cap L^\infty(\R)$ such that $\int_{\R}|x|\rho_0(x)\dx<+\infty$ and let $\{\rho_{0}^\lambda\}_{\lambda\in\N}\subset L^1(\R)\cap L^\infty(\R)$ be a uniformly bounded  $L^1$ approximating sequence for $\rho_0$ with the property that $\rho_{0}^\lambda>0$ on $\R$ for all $\lambda\in\N$ and the first moments are uniformly bounded. 
Then, the sequence of solutions $\rho^\lambda$ of the PDE~\eqref{eq:mainPDE} found in Theorem~\ref{thm:convinl} from the initial  data $\rho_{0}^\lambda$ converges (up to subsequences) in $L^1([0,T]\times\R)$ to a solution of the PDE~\eqref{eq:mainPDE} starting from $\rho_0$.
\end{theorem}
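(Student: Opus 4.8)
\textbf{Proof proposal for Theorem~\ref{thm:convfinal}.}

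The plan is to run a stability/compactness argument for the family $\{\rho^\lambda\}_{\lambda\in\N}$ of weak solutions produced by Theorem~\ref{thm:convinl}, passing to the limit $\lambda\to\infty$. The first step is to collect the uniform bounds that survive from the construction. Since the $\rho_0^\lambda$ are uniformly bounded in $L^1\cap L^\infty$ with uniformly bounded first moments and uniformly bounded energies $\Fcal(\rho_0^\lambda)$ (the energy bound follows from the $L^\infty$ and $L^1$ bounds together with $K\in L^\infty$ and the growth hypotheses \eqref{eq:phi4}--\eqref{eq:phirhocond} on $W$, which control $\int W(\rho_0^\lambda)$), the $L^\infty$ bound of Theorem~\ref{thm:existenceapprox} is uniform in $\lambda$ on every time interval $[0,T]$; this passes to the limit in $N$ and then in $L$, so that $\sup_\lambda\sup_{t\in[0,T]}\|\rho^\lambda(t)\|_{L^\infty(\R)}\le C(T)$. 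Mass is conserved, $\|\rho^\lambda(t)\|_{L^1}=\|\rho_0\|_{L^1}$ (approximately, up to the $L^1$-closeness of $\rho_0^\lambda$ to $\rho_0$), and the first-moment bound propagates in time with at most linear-in-$t$ growth because the velocity field $\partial_x(K*\rho^\lambda)+\partial_x\phi(\rho^\lambda)/\rho^\lambda$ contributes a term controlled by $\|K'\|_{L^\infty}$ and the flux estimates already used in Theorems~\ref{thm:convlfixed}--\ref{thm:convinl}; this gives tightness uniform in $\lambda$.

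The second step is to upgrade weak-$*$ compactness to strong $L^1([0,T]\times\R)$ compactness. For this I would reuse the discrete $W^{1,2}$-type estimate on the nonlinear term mentioned in the introduction: it is stated to be independent of $N$, of $L$, and of any lower bound on the density, so it should descend to a uniform-in-$\lambda$ bound $\|\partial_x \psi(\rho^\lambda)\|_{L^2([0,T]\times\R)}\le C$ for the relevant nonlinear function $\psi$ (a primitive of $\sqrt{\phi'}$, or $\phi$ itself), yielding spatial compactness via Rellich. Temporal compactness comes from the equation itself: $\partial_t\rho^\lambda=\partial_x\big(\rho^\lambda\partial_x(K*\rho^\lambda)+\partial_x\phi(\rho^\lambda)\big)$ is bounded in, say, $L^2([0,T];H^{-1}_{\loc})$ using the $L^\infty$ bound, $\|K'\|_{L^\infty}$, and the $W^{1,2}$ estimate on $\phi(\rho^\lambda)$. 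An Aubin--Lions--Simon argument on each ball $[-R,R]$, combined with the uniform tightness from the first moment, then gives a subsequence converging strongly in $L^1([0,T]\times\R)$ to some $\rho$ with $\rho(0)=\rho_0$.

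The third step is passing to the limit in the weak formulation. With $\rho^\lambda\to\rho$ strongly in $L^1$ and uniformly bounded in $L^\infty$, we get $\rho^\lambda\to\rho$ in every $L^p_{\loc}$, $p<\infty$; hence $\phi(\rho^\lambda)\to\phi(\rho)$ strongly (continuity of $\phi$ plus dominated convergence), and $K*\rho^\lambda\to K*\rho$ with $\partial_x(K*\rho^\lambda)=K'*\rho^\lambda\to K'*\rho$ strongly in $L^1_{\loc}$ and boundedly, using $K'\in L^1\cap L^\infty$. The only genuinely nonlinear term in the weak formulation is $\int\int \rho^\lambda\,(K'*\rho^\lambda)\,\partial_x\zeta$, which is a product of an $L^\infty$-bounded strongly convergent factor with a strongly convergent bounded factor, so it converges. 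Therefore $\rho$ is a weak solution of \eqref{eq:mainPDE} with initial datum $\rho_0$. The main obstacle I anticipate is the second step — propagating the $W^{1,2}$-type estimate on the nonlinear term uniformly through \emph{both} limits ($N\to\infty$ on $\T_L$, then $L\to\infty$) and then uniformly in $\lambda$, and in particular checking that it truly does not degenerate as the density approaches vacuum: the energy bound must remain uniform along the approximating sequence $\rho_0^\lambda$, which is exactly what the hypothesis on uniformly bounded first moments and uniform $L^1\cap L^\infty$ bounds is designed to guarantee, but verifying that the constants in the flux and $W^{1,2}$ estimates depend only on these quantities (and on $T$, $K$, $c_0,c_1,c_2$) is the delicate bookkeeping.
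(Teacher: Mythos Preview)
Your overall strategy is sound and arrives at the same conclusion, but the compactness mechanism you propose is genuinely different from the paper's. The paper's proof of Theorem~\ref{thm:convfinal} is essentially a two-line invocation: uniform $L^\infty$ and energy bounds on the initial data give, via the second part of Theorem~\ref{thm:compactness2}, strong $L^1$ compactness of $\{\rho^\lambda\}$; then one passes to the limit in the weak formulation exactly as you do in your third step. The work is entirely hidden in Theorem~\ref{thm:compactness2}, which uses the Rossi--Savar\'e generalized Aubin--Lions lemma with the functional $\mathcal G$ built from $TV(\phi(\rho))$, an $L^\infty$ indicator, and a $1$-Wasserstein tightness constraint, and with time equicontinuity coming from the H\"older bound on $\mathtt d_{W_1}(\rho(s),\rho(t))$ of Lemma~\ref{lemma:g}. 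All of these estimates are shown to depend only on $C_0,C_1$ (energy and $L^\infty$ of the initial data), hence are uniform in $N$, $L$, and $\lambda$, and pass to the successive limits by lower semicontinuity.

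Your route --- spatial compactness from a $W^{1,2}$ bound on $\phi(\rho^\lambda)$ in the physical variable, temporal compactness from $\partial_t\rho^\lambda\in L^2_tH^{-1}_x$ read off the equation, then classical Aubin--Lions--Simon on balls plus moment-based tightness --- is a legitimate alternative, but two points deserve care. First, the paper's ``discrete $W^{1,2}$'' estimate (Lemma~\ref{lemma:w12estint}) is in the \emph{pseudo-inverse} variable $z$, namely $\int_0^T\frac{N}{c_L}\sum_k|\phi(\rho_{k+1})-\phi(\rho_k)|^2\,dt\le C$; converting this to a bound on $\|\partial_x\phi(\rho)\|_{L^2_{t,x}}$ costs a factor of $\|\rho\|_{L^\infty}$ (since $dz=\rho\,dx$), so you must combine it with the $L^\infty$ bound --- this works, but should be said explicitly. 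The paper itself only extracts the weaker $\int_0^T TV(\phi(\rho))\,dt$ bound~\eqref{eq:bvest} from it. Second, your tightness argument via first-moment propagation is correct but again needs the $TV$ (or $W^{1,2}$) control on $\phi(\rho)$ to handle the diffusion contribution when you test with $|x|$; the paper sidesteps this by encoding tightness directly in $\mathcal G$ through the Wasserstein distance to $\rho_0$, which is controlled uniformly by Lemma~\ref{lemma:g}. Both approaches ultimately rest on the same quantitative input (the energy-dissipation estimate); yours is more classical and perhaps more transparent, while the paper's is more economical because the Rossi--Savar\'e framework handles space, time, and tightness in one stroke.
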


   What prevents the blow-up of the deterministic particle approximations in our case are the following facts: 
   \begin{itemize}
   	\item the evolution of the deterministic particle approximations inside sublevels of the energy up to some $O(1/\sqrt{N})$ proved in Corollary~\ref{cor:energybound}
   	\item the estimate of a discrete $W^{1,2}$ norm of the function $\phi(\rho^N_L)$ through the time derivative of the functional along the deterministic particle flow. 
   	\item the special structure of the deterministic particle approximation, namely the relation between the values of the approximating function and the distance between particles.
   \end{itemize}
   We notice indeed that  a mere bound on the energy values along the evolution would not be sufficient to prevent the blow-up. Thus all three points above are fundamental ingredients in the proof.  
   
  The $L^\infty$ bounds are obtained differently from \cite{DFR, fagioli-radici-1}, due to the absence  of a nonlinear mobility which ``blocks'' density values larger than some given threshold.
   What gives us convergence of the scheme are not  uniform $BV$ bounds on the deterministic particle approximations as in~\cite{DFR,fagioli-radici-1, fagioli-radici-2}, which would be dependent on the size of the torus $\T_L$, but uniform $W^{1,2}$ bounds on the composed functions $\phi(\rho^N_L)$ together with the strict monotonicity assumption on the nonlinear  function $\phi$. Such bounds are obtained using an explicit computation of the derivative of the energy functional along the flow and the fact that sublevels of the energy are almost invariant under the particles evolution. With similar estimates one obtains also an H\"older-type continuity of the $1$-Wasserstein distance along the deterministic particle flow, similar to that obtained in \cite{MS},  which is needed in order to apply a  compactness argument.
   
   This allows us to have, in the limit as $N\to+\infty$, estimates which depend only on the energy and on the $L^\infty$ bounds of the initial data, thus allowing us to pass to the limit in an unbounded domain.
   \ \\

   The main novelty  present in this paper is that we are able to prove the strong convergence of the deterministic particle approximations scheme to the solutions of the aggregation-diffusion equation \eqref{eq:mainPDE} in an unbounded setting and for general  integrable and bounded initial densities.

\section{Preliminary facts}

  \subsection{Pseudo-inverse}
  
  Fix $x_0\in\T_L$. Given a nonnegative  density function $\rho:\T_L\to[0,+\infty)$ with $\int_{\T_L}\rho=c_L$, define its pseudo-inverse $X:[0,c_L]\to\T_L$ as follows
  \begin{equation}\label{eq:pseudoinv}
  X(z)=\sup\Big\{x:\int_{x_0}^x\rho(y)\dy<z\Big\}.
  \end{equation}
  If $\rho(t,x)\in L^\infty$ is a weak solution of~\eqref{eq:mainPDE} on $\T_L$ and $\rho>0$, then $X(t,z)=X_{\rho(t,\cdot)}(z)$ solves the PDE
  \begin{equation*}
  \partial_tX(t,z)=-\int_0^{c_L}K'(X(t,z)-X(t,\xi))\d\xi-\partial_z\phi(\rho(X(t,z))).
  \end{equation*}

  \subsection{Deterministic particle approximation}\label{sec:det}
  
  Our goal is to approximate~\eqref{eq:mainPDE} with a moving particle approximation on a series of increasing tori.
  
  Let us fix $L>0$, $N\in\N$ and $x_0\in\T_L$.

Given $\rho_{0,L}\in L^\infty(\T_L)$ with $\int_{\T_L}\rho_{0,L}=c_L\leq1$ and $\rho_{0,L}\geq0$, for all $k=1,\dots,N$ define (for the ordering identify $\T_L$ with $[-L/2,L/2]$ and set $x_0=-L/2$)
  \begin{equation*}
  x_k=\sup\Big\{x:\int_{x_{k-1}}^x\rho_{0,L}(y)\dy<\frac{c_L}{N}\Big\}.
  \end{equation*}
  Notice that $x_0<\cdots<x_{N-1}$  and $x_N=L/2=x_0+L$ (resp. $x_N=x_0$ on $\T_L$).

  For every $k=0,\dots,N$ define the following system of ODEs on $\T_L$
  \begin{align}\label{eq:ode}
  \dot x_k(t)=&-\frac{c_L}{N}\sum_{j\neq k}K'(x_k(t)-x_j(t))-\frac{ N}{c_L}[\phi(\rho_k(t))-\phi(\rho_{k-1}(t))],
  \end{align}
  where
  \begin{equation*}
  \rho_k(t)=\frac{c_L}{N(x_{k+1}(t)-x_k(t))}
  \end{equation*}
  and with initial conditions
  \begin{equation*}
  x_k(0)=x_k,
  \end{equation*}
  as long as $x_0(t)<\cdots<x_{N-1}(t)$. 
In the above the dependence on $N,L$ is omitted since clear from the context.

  Then define the deterministic particle approximations starting from $\rho_{0,L}$ as the piecewise constant functions 
  \begin{equation*}
  \rho^N_L(t,x)=\sum_{k=0}^{N-1}\rho_k(t)\chi_{[x_k(t),x_{k+1}(t))}(x).
  \end{equation*}
  We say that the deterministic particle approximation $\rho^N_L$ is well defined on $[0,T)$ provided the relation $x_0(t)<\dots<x_{N-1}(t)$ holds for all $t\in[0,T)$. 
  Notice that, despite of the above definition of starting point of the deterministic particle approximation, $\rho_{0,L}\neq\rho^N_L(0)$ but $\rho^N_L(0)\to\rho_{0,L}$ in $L^1$ as $N\to\infty$.
  
  Let $M_{\rho^N_L}:\T_L\to[0,c_L]$ be the cumulative distribution function of $\rho^N_L$, namely
  \[
  M_{\rho^N_L}(x)=\int_{x_0}^x \rho^N_L(y)\dy
  \]
   and $X:[0,c_L]\to\T_L$ its pseudoinverse function defined in~\eqref{eq:pseudoinv} for $\rho=\rho^N_L$, with $x_k=X(kc_L/N)$. 
  
Thus the piecewise constant approximations $\rho^N_L$  satisfy on $\T_L$ the PDE
  \begin{align}\label{eq:pdeapprox}
\partial_t\rho^N_L = & \partial_x\Big(\rho^N_L {K^\lin}'(\rho^N_L)\ast\rho^N_L+\frac{ N}{c_L}\rho^N_L\sum_k\chi_{[x_k(t),x_{k+1}(t))}(x)\bigl[\bigl(\phi(\rho_k)-\phi(\rho_{k-1})\bigr)\notag\\
&+\Bigl(\frac{ N}{c_L}M_{\rho^N_L(t)}(x)-k\Bigr)\bigl( (\phi(\rho_{k+1})-\phi(\rho_{k})) - (\phi(\rho_{k})-\phi(\rho_{k -1}))\bigr) \bigr]\Big),
\end{align} where
\begin{align*}
{K^\lin}'&(\rho^N_L)(x-y)=\sum_{k=0}^{N-1}\chi_{[kc_L/N,(k+1)c_L/N)}(M_{\rho^N_L}(x))\sum_{j=0}^{N-1}\chi_{[jc_L/N,(j+1)c_L/N)}(M_{\rho^N_L}(y))\cdot\notag\\
&\cdot\Bigl[(1-\chi_{\{0\}}(x_j-x_k)){K}'(x_k-x_{j})\notag\\
&+\Bigl(\frac{N}{c_L}M_{\rho^N_L}(x)-k\Bigr)[(1-\chi_{\{0\}}(x_j-x_{k+1}))K'(x_{k+1}-x_j)-(1-\chi_{\{0\}}(x_j-x_k))K'(x_{k}-x_j)]\Bigr].
\end{align*}
Due to the translation invariance of the torus, we can assume w.l.o.g. that $x_0(t)=x_N(t)$ is fixed during the evolution.

  \subsection{Bounds on the nonlocal interaction term}

  Since $\int_{\T_L}\rho^N_L=c_L$, for  any $z \in [0,c_L)$ one gets
  \begin{equation}\label{eq:kerbound}
   | K' \ast \rho^N_L (t, X(t,z))  | \leq c_L\|K' \|_{L^\infty}.
  \end{equation}

  We recall that $K'' \in L^{\infty}(\R \setminus \{0\})$, with a slight abuse of notation we denote from now on the uniform bound of $K''$ on its domain with $\| K'' \|_{L^\infty}$.

  If $z \in [kc_L/N, (k+1)c_L/N)$, then 
\begin{equation}\label{eq:kerLip}
   |K'\ast \rho^N_L(t,X(t,z)) - {K^\lin}'(\rho^N_L) \ast \rho^N_L (t,X(t,z))| \leq \| K'' \|_{L^\infty} (x_{k+1} - x_k) + c_L\frac{L\| K'' \|_{L^\infty} + 3 \| K' \|_{L^\infty}}{N}.
\end{equation}
  Indeed
  \begin{align*}
  |K' &\ast \rho^N_L(t,X(t,z)) - {K^\lin}'(\rho^N_L) \ast \rho^N_L (t,X(t,z))| \leq  \sum_{i \neq k} \rho_i(t) \int_{x_i}^{x_{i+1}} |K' (X(t,z) - y) - K' (x_k - x_i)| \dy \\
   &+ \sum_{i \neq k, k+1} \rho_i(t) \int_{x_i}^{x_{i+1}} \Bigl(\frac{N}{c_L}M_{\rho^N_L(t)}(X(t,z)) - k\Bigr) |K' (x_{k+1} - x_i) - K' (x_k - x_i)|  \dy \\
   &+ \frac{3c_L}{N}\| K'\|_{L^\infty} \\
\leq &  c_L \frac{\| K'' \|_{L^\infty}}{N} \sum_{i \neq k, k+1} [(X(z) - x_k) + (x_{i+1} - x_i)] + c_L\| K'' \|_{L^\infty}(x_{k+1} - x_k) + \frac{3c_L}{N}\| K'\|_{L^\infty},
  \end{align*}
and~\eqref{eq:kerLip} follows immediately since $\sum_i (x_{i+1} - x_i) = L$.

 \section{Estimates on the energy of the particle approximations}
 
  The aim of this section is to provide explicit computations and estimates on the energy of the discrete particle approximations. These will be crucial both in providing a uniform $L^\infty$ bound for the approximations and in finding the limit PDE~\eqref{eq:mainPDE}.
  
   In this section we will sometimes denote with $\rho^N_L(t)$ the function $\rho^N_L(t,\cdot):\T_L\to[0,+\infty)$. 
   
  We have the following estimate on the time derivative of the energy $\Fcal_L$ along the discrete particle approximations $\rho^N_L$, where
    \begin{equation}
  \label{eq:FcalL}
  \begin{split}
  \Fcal_L(\rho) :=\frac12\int_{\T_L}\int_{\T_L} K(x-y) \rho(x)\rho(y)\dx\dy +\int_{\T_L}W(\rho)\dx.
  \end{split}
  \end{equation}
  
 \begin{lemma}\label{lemma:decrease}
	Assume $\rho^N_L(t)$ is well defined on $[0,T)$. Then, for all $t\in[0,T)$ the functional $\Fcal_L(\rho^N_L(t))$ satisfies
	\begin{equation}\label{eq:gradient}
	\frac{d}{dt}\Fcal_L(\rho^N_L(t))\leq  \bar C(\| K'\|_{L^\infty}, \| K''\|_{L^\infty}) \frac{L}{\sqrt{N}}.
	\end{equation}
\end{lemma}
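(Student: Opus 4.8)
\emph{Plan.} The idea is that the system~\eqref{eq:ode} is, up to a quadrature error in the convolution term, the gradient flow of $\Fcal_L$ evaluated along piecewise constant densities; the non-positive dissipation furnished by this gradient-flow structure will absorb the error and leave exactly the remainder in~\eqref{eq:gradient}. First I would record that $\rho^N_L(t,\cdot)$ is determined by the ordered positions $x_0(t)<\dots<x_{N-1}(t)$ alone (its pseudo-inverse $X(t,\cdot)$ is the piecewise affine interpolant of the nodes $(kc_L/N,x_k(t))$), so $\Fcal_L(\rho^N_L(t))=G(x_0(t),\dots,x_{N-1}(t))$ with $G$ of class $C^1$ on the set of distinct configurations, and $\frac{d}{dt}\Fcal_L(\rho^N_L(t))=\sum_k\partial_{x_k}G\,\dot x_k$.

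Next I would compute $\partial_{x_k}G$. For the internal-energy part, $\int_{\T_L}W(\rho^N_L)=\frac{c_L}{N}\sum_j W(\rho_j)/\rho_j$, and $x_k$ enters only through $\rho_{k-1},\rho_k$, with $\partial_{x_k}\rho_{k-1}=-\frac{N}{c_L}\rho_{k-1}^2$ and $\partial_{x_k}\rho_k=\frac{N}{c_L}\rho_k^2$; using $\rho^2\frac{d}{d\rho}\big(W(\rho)/\rho\big)=\rho W'(\rho)-W(\rho)=\phi(\rho)$ this part equals \emph{exactly} $\phi(\rho_k)-\phi(\rho_{k-1})$. For the interaction part, written as $\frac12\int_0^{c_L}\!\!\int_0^{c_L}K(X(z)-X(\zeta))\,\mathrm{d}z\,\mathrm{d}\zeta$, only the product cells $I_i\times I_j$ with $i\in\{k-1,k\}$ (or $j\in\{k-1,k\}$) contribute; differentiating there with the affine weights of $X$ and using that $K'$ is odd gives
\[
\partial_{x_k}\Big(\tfrac12\!\int_0^{c_L}\!\!\int_0^{c_L}\!K(X(z)-X(\zeta))\,\mathrm{d}z\,\mathrm{d}\zeta\Big)=\Big(\tfrac{c_L}{N}\Big)^2\sum_{j\ne k}K'(x_k-x_j)+e_k,
\]
where $e_k$ is purely the quadrature error of replacing, on each cell $I_i\times I_j$ with $i\in\{k-1,k\}$, the integrand $K'(X(z)-X(\zeta))$ by the nodal value $K'(x_k-x_j)$, together with the contribution of the discarded diagonal cell. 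Comparing with~\eqref{eq:ode} — which states exactly $-\frac{c_L}{N}\dot x_k=\big(\frac{c_L}{N}\big)^2\sum_{j\ne k}K'(x_k-x_j)+\big(\phi(\rho_k)-\phi(\rho_{k-1})\big)$ — I obtain $\partial_{x_k}G=-\frac{c_L}{N}\dot x_k+e_k$, hence
\[
\frac{d}{dt}\Fcal_L(\rho^N_L(t))=-\frac{c_L}{N}\sum_k\dot x_k^2+\sum_k e_k\dot x_k.
\]

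The first sum is $\le 0$ (the dissipation of the gradient-flow part), so Young's inequality gives $\frac{d}{dt}\Fcal_L(\rho^N_L(t))\le\frac{N}{2c_L}\sum_k e_k^2$, and it remains only to estimate the quadrature remainders. Since $K''\in L^\infty(\R\setminus\{0\})$, on cells with $x_k-x_j$ bounded away from $0$ the map $K'$ is Lipschitz and the cell contribution is $\lesssim\big(\frac{c_L}{N}\big)^2\|K''\|_{L^\infty}\big((x_{k+1}-x_k)+(x_k-x_{k-1})+(x_{j+1}-x_j)\big)$; for the $O(1)$ indices $j$ adjacent to $k$, where $K'$ is only bounded because of the Lipschitz singularity of $K$ at the origin, and for the discarded diagonal cell, the crude bound $\big(\frac{c_L}{N}\big)^2\|K'\|_{L^\infty}$ suffices (it matches, to this order, the $j=k$ term omitted in~\eqref{eq:ode}). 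Summing in $j$ with $\sum_j(x_{j+1}-x_j)=L$ gives
\[
|e_k|\le C\big(\|K'\|_{L^\infty},\|K''\|_{L^\infty}\big)\Big[\tfrac{c_L^2}{N}\big((x_{k+1}-x_k)+(x_k-x_{k-1})\big)+\tfrac{c_L^2(L+1)}{N^2}\Big],
\]
and then squaring, summing in $k$, and using $\sum_k(x_{k+1}-x_k)=L$ once more (so that $\sum_k\big((x_{k+1}-x_k)+(x_k-x_{k-1})\big)^2\le 4\max_k(x_{k+1}-x_k)\sum_k(x_{k+1}-x_k)\le 4L^2$) yields $\sum_k e_k^2\le C\big(\|K'\|_{L^\infty},\|K''\|_{L^\infty}\big)\,c_L^4L^2/N^2$. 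Therefore $\frac{d}{dt}\Fcal_L(\rho^N_L(t))\le\bar C\big(\|K'\|_{L^\infty},\|K''\|_{L^\infty}\big)\,L^2/N$ (recalling $c_L\le1$), which gives~\eqref{eq:gradient} — in particular it is $\le\bar C\,L/\sqrt N$ for the range of $N$ at which the scheme is used, where $N\gtrsim L^2$.

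The main obstacle is the careful accounting of $e_k$: isolating the off-diagonal cells, where $\|K''\|_{L^\infty}$ together with a gain in the cell widths is available, from the finitely many cells near the diagonal, where $K$ is merely Lipschitz and only $\|K'\|_{L^\infty}$ can be used, and checking that dropping the $j=k$ term in~\eqref{eq:ode} is consistent with discarding the diagonal cell to the required order. Equally important — and easy to overlook — is verifying that the internal-energy part of $\partial_{x_k}G$ is \emph{exactly} $\phi(\rho_k)-\phi(\rho_{k-1})$, with no error term: this exactness is what leaves the full dissipation $-\frac{c_L}{N}\sum_k\dot x_k^2$ available to absorb $\sum_k e_k\dot x_k$, so that $\frac{d}{dt}\Fcal_L(\rho^N_L(t))$ is controlled through the mesh alone and not through any (a priori unavailable) bound on the $\dot x_k$'s or on the discrete $W^{1,2}$ norm of $\phi(\rho^N_L)$.
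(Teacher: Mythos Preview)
Your argument is correct and in fact somewhat cleaner than the paper's. Both exploit that the scheme is an approximate gradient flow of $\Fcal_L$, but the organization differs. You compute $\partial_{x_k}G$ directly in particle coordinates, obtain the exact identity $\partial_{x_k}G=-\tfrac{c_L}{N}\dot x_k+e_k$, and then use Young's inequality to absorb $\sum_k e_k\dot x_k$ into the dissipation $-\tfrac{c_L}{N}\sum_k\dot x_k^2$, yielding $\tfrac{d}{dt}\Fcal_L\le \tfrac{N}{4c_L}\sum_k e_k^2\le \bar C\,L^2/N$. The paper instead differentiates via the approximate PDE~\eqref{eq:pdeapprox}, arrives at the inequality~\eqref{eq:ender} containing the two negative complete squares, and controls the cross terms by Cauchy--Schwarz combined with the case split $|a_k|\lessgtr 2|b_k|$ (see~\eqref{eq:casoI}--\eqref{eq:casoIII}); this gives the stated rate $L/\sqrt{N}$.

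Two points worth recording. First, your bound $L^2/N$ is formally different from the stated $L/\sqrt{N}$, but the paper's own absorption step~\eqref{eq:casoIII} also requires $N$ large enough that $\tfrac{4L\sqrt{c_L}\,C}{\sqrt N}<\tfrac12$, i.e.\ $N\gtrsim L^2$; in that regime $L^2/N\le L/\sqrt N$, so your conclusion is actually stronger, and your closing remark about the range of $N$ is exactly right. Second, your identification that the internal-energy contribution to $\partial_{x_k}G$ equals \emph{exactly} $\phi(\rho_k)-\phi(\rho_{k-1})$ is the key structural fact (the paper uses it implicitly when computing $\tfrac{d}{dt}\int_{\T_L}W(\rho^N_L)$); without it the dissipation would not be available in full and neither argument would close. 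What your route buys is brevity and a single estimate on $\sum_k e_k^2$; what the paper's case split buys is the $L/\sqrt N$ form as written, at the cost of a longer argument.
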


Estimate~\eqref{eq:gradient} is natural since the discrete particle approximations $\rho^N_L$ satisfy the PDE~\eqref{eq:pdeapprox}, which is an approximate version of the gradient flow in the Wasserstein space of the functional $\Fcal_L$.

A straightforward consequence of Lemma~\ref{lemma:decrease} is the following 
\begin{corollary}\label{cor:energybound}
 Let $T>0$, $\eps>0$, $L>0$. Then there exists $\bar N=\bar N(T,\eps,L)$ such that for all $N\geq \bar N$ if $\rho^N_L$ are the discrete particle approximations with initial datum $\rho_{0,L}$ and they are well defined on $[0,T)$, then 
 \begin{equation*}
	\Fcal_L(\rho^N_L(t))\leq \Fcal_L(\rho_{0,L})+2\eps \quad\forall\,t\in[0,T).
	\end{equation*}
\end{corollary}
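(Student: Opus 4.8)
\textbf{Proof proposal for Corollary~\ref{cor:energybound}.}

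The plan is to integrate the differential inequality from Lemma~\ref{lemma:decrease} in time. Fix $T>0$, $\eps>0$, $L>0$, and suppose $\rho^N_L$ is well defined on $[0,T)$. By Lemma~\ref{lemma:decrease}, for all $t\in[0,T)$ we have
\begin{equation*}
\frac{d}{dt}\Fcal_L(\rho^N_L(t))\leq \bar C(\|K'\|_{L^\infty},\|K''\|_{L^\infty})\,\frac{L}{\sqrt N}.
\end{equation*}
Since $t\mapsto\Fcal_L(\rho^N_L(t))$ is absolutely continuous on compact subintervals of $[0,T)$ (the particle positions evolve by the ODE system~\eqref{eq:ode} and stay strictly ordered, so all quantities entering $\Fcal_L$ are $C^1$ in $t$), we may integrate from $0$ to $t$ to obtain
\begin{equation*}
\Fcal_L(\rho^N_L(t))\leq \Fcal_L(\rho^N_L(0))+\bar C\,\frac{L}{\sqrt N}\,t\leq \Fcal_L(\rho^N_L(0))+\bar C\,\frac{LT}{\sqrt N}.
\end{equation*}

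It then remains to compare $\Fcal_L(\rho^N_L(0))$ with $\Fcal_L(\rho_{0,L})$. Here I would use that $\rho^N_L(0)\to\rho_{0,L}$ in $L^1(\T_L)$ as $N\to\infty$ (noted right after the definition of the scheme in §\ref{sec:det}), together with the uniform bound $\|\rho^N_L(0)\|_{L^\infty(\T_L)}\leq\|\rho_{0,L}\|_{L^\infty(\T_L)}$ which is immediate from the construction of the $x_k$: on each interval $[x_k,x_{k+1})$ the value $\rho_k=\frac{c_L}{N(x_{k+1}-x_k)}$ equals the average of $\rho_{0,L}$ over that interval, hence is bounded by $\|\rho_{0,L}\|_{L^\infty}$. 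Consequently $\rho^N_L(0)\to\rho_{0,L}$ in every $L^p(\T_L)$, $p<\infty$, and in particular $\int_{\T_L}\int_{\T_L}K(x-y)\rho^N_L(0,x)\rho^N_L(0,y)\to\int_{\T_L}\int_{\T_L}K(x-y)\rho_{0,L}(x)\rho_{0,L}(y)$ since $K$ is bounded and continuous. For the nonlinear part $\int_{\T_L}W(\rho^N_L(0))$, continuity of $W$ on the compact range $[0,\|\rho_{0,L}\|_{L^\infty}]$ (hence uniform continuity there) together with $L^1$ (equivalently, a.e.\ along a subsequence) convergence and dominated convergence gives $\int_{\T_L}W(\rho^N_L(0))\to\int_{\T_L}W(\rho_{0,L})$. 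Therefore $\Fcal_L(\rho^N_L(0))\to\Fcal_L(\rho_{0,L})$, so there is $N_1=N_1(\eps,L)$ with $\Fcal_L(\rho^N_L(0))\leq\Fcal_L(\rho_{0,L})+\eps$ for all $N\geq N_1$.

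Finally choose $N_2=N_2(T,\eps,L)$ so that $\bar C\,LT/\sqrt N\leq\eps$ for $N\geq N_2$, and set $\bar N=\bar N(T,\eps,L):=\max\{N_1,N_2\}$. Combining the two displays, for every $N\geq\bar N$, whenever $\rho^N_L$ is well defined on $[0,T)$,
\begin{equation*}
\Fcal_L(\rho^N_L(t))\leq \Fcal_L(\rho^N_L(0))+\eps\leq \Fcal_L(\rho_{0,L})+2\eps\qquad\forall\,t\in[0,T),
\end{equation*}
which is the claim. The only genuinely non-routine point is the convergence $\Fcal_L(\rho^N_L(0))\to\Fcal_L(\rho_{0,L})$ of the discretized initial energy, and that reduces cleanly to the a priori $L^\infty$ bound on $\rho^N_L(0)$ plus continuity of $K$ and $W$; everything else is integrating the inequality of Lemma~\ref{lemma:decrease}.
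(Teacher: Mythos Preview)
Your proof is correct and follows exactly the route the paper intends: the paper gives no explicit proof for this corollary, simply calling it ``a straightforward consequence of Lemma~\ref{lemma:decrease}'', i.e.\ integrate the differential inequality in time and take $N$ large. You in fact do a bit more than the paper, correctly noticing that the statement is phrased with $\Fcal_L(\rho_{0,L})$ rather than $\Fcal_L(\rho^N_L(0))$ and supplying the missing step $\Fcal_L(\rho^N_L(0))\to\Fcal_L(\rho_{0,L})$ via the averaging bound $\|\rho^N_L(0)\|_{L^\infty}\le\|\rho_{0,L}\|_{L^\infty}$ together with continuity of $K$ and $W$ on the relevant compact range.
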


\begin{proof}
	[Proof of Lemma~\ref{lemma:decrease}:]  
	
	For simplicity, in this proof we introduce the notation 
   \begin{equation}
      \label{eq:Fk}
      \begin{split}
	  F_k(t):= \phi (\rho_k(t)) -  \phi (\rho_{k-1}(t))
      \end{split}
   \end{equation}
	where 
	\[ \rho_k(t) = \rho^N_{L}(t,x_k(t)) = \frac{c_L}{N (x_{k+1}(t) - x_k(t))}.  \]
	
	From  the symmetry of the kernel, it is easy to see that 
	\begin{align}\label{eq:derform}
	 \frac{d}{dt}\Fcal_{L}(\rho^N_L(t)) =  \frac{d}{dt}\int_{\T_L}   W(\rho^N_L(t,x)) \dx +   \int_{\T_L}\int_{\T_L} K(x-y) \rho^N_L(t,y) \partial_t \rho^N_L(t,x) \dx\dy. 
	\end{align}

	Using~\eqref{eq:pdeapprox}, integration by parts and applying the pseudoinverse change of variables, one can rewrite the second term of the r.h.s. of~\eqref{eq:derform} as follows
	\begin{align*}
	  \int_{\T_L}\int_{\T_L} K(x-y) & \rho^N_L(t,y) \partial_t \rho^N_L(t,x) \dx\dy =  - \int_0^{c_L} \left( K' \ast \rho^N_L (t,X(t,z)) \right)^2 \dz \\
	 & + \int_0^{c_L} K'\ast \rho^N_L (t,X(t,z)) \big[K' \ast \rho^N_L (t,X(t,z)) -  {K^\lin}'(\rho^N_L) \ast \rho^N_L (t,X(t,z)) \big] \dz \\
	 & - \frac{N}{c_L} \sum_{k=0}^{N-1} \int_{kc_L/N}^{(k+1)c_L/N} K'\ast \rho^N_L (t,X(t,z))\Bigl[ F_k(t) + \Bigl(\frac{N}{c_L}z - k\Bigr)(F_{k+1}(t) - F_k(t)) \Bigr] \dz. 
	\end{align*}	
	
On the other hand, observing that the ODE~\eqref{eq:ode} can be rewritten as
\begin{equation*}
\dot x_k(t) =-\int_{\T_L} {K^\lin}'(\rho^N_L)(x_k-y)\rho^N_L(t,y)\dy-\frac{N}{c_L}F_k(t),
\end{equation*}
the first term of the r.h.s. of~\eqref{eq:derform} can be explicitly computed obtaining
\begin{align*}
 \frac{d}{dt}\int_{\T_L}&  W(\rho^N_L(t,x)) \dx =\frac{d}{dt}\sum_{k=0}^{N-1}(x_{k+1}-x_k) W(\rho_k)\\
 &= -\frac{N}{c_L} \sum_{k=0}^{N-1}  (F_k(t))^2 -  \sum_{k=0}^{N-1}  F_k(t)  {K^\lin}'(\rho^N_L)\ast \rho^N_L (t,x_k)   \\
&= -\frac{N^2}{c_L^2}  \sum_{k=0}^{N-1} \int_{kc_L/N}^{(k+1)c_L/N} \big( F_k(t)\big)^2 \dz -\frac{N}{c_L} \sum_{k=0}^{N-1} \int_{kc_L/N}^{(k+1)c_L/N}  F_k(t) K' \ast \rho^N_L (t,X(t,z)) \dz\\
 & - \frac{N}{c_L} \sum_{k=0}^{N-1} \int_{kc_L/N}^{(k+1)c_L/N}  F_k(t) [ K' \ast \rho^N_L (t,X(t,z))  -  {K^\lin}'(\rho^N_L) \ast \rho^N_L (t,x_k) ] \dz.
\end{align*}

Once here, we observe that the periodicity of the torus ensures that $\sum_kF_k^2=\sum_kF_{k+1}^2$ and as a consequence by standard computations
\[\sum_{k=0}^{N-1} \int_{kc_L/N}^{(k+1)c_L/N} (F_k(t))^2 \dz \geq \sum_{k=0}^{N-1} \int_{kc_L/N}^{(k+1)c_L/N}  \big(F_k(t) + \Bigl(\frac{N}{c_L}z-k\Bigr)(F_{k+1}(t) - F_k(t)) \big)^2 \dz.   \]
Then,
\begin{align}\label{eq:ender}
\notag
\frac{d}{dt}\Fcal_{L}(\rho^N_L(t)) \leq&  -\frac{1}{2} \sum_{k=0}^{N-1} \int_{kc_L/N}^{(k+1)c_L/N} \Bigl[ \frac{N}{c_L} F_k(t) + K'\ast \rho^N_L (t,X(t,z)) \Bigr]^2 \dz \\
\notag
&  -\frac{1}{2} \sum_{k=0}^{N-1} \int_{kc_L/N}^{(k+1)c_L/N} \Big[\frac{N}{c_L} \big(F_k(t) + \Bigl(\frac{N}{c_L}z -k\Bigr)(F_{k+1}(t) - F_k(t))\big)  + K' \ast \rho^N_L (t,X(t,z))\Big]^2 \dz \\
\notag
& -  \sum_{k=0}^{N-1} \int_{kc_L/N}^{(k+1)c_L/N} \frac{N}{c_L} F_k(t) [ K'_{} \ast \rho^N_L (t,X(t,z))  -  {K^\lin}'(\rho^N_L) \ast \rho^N_L (t,x_k) ]\dz \\
&+  \sum_{k=0}^{N-1} \int_{kc_L/N}^{(k+1)c_L/N} K' \ast \rho^N_L (t,X(t,z)) \big[K'\ast \rho^N_L (t,X(t,z)) -  {K^\lin}'(\rho^N_L) \ast \rho^N_L (t,X(t,z)) \big] \dz.
\end{align}
From now on, our aim is to show that the r.h.s. of~\eqref{eq:ender} is negative, up to a term which tends to zero as the number of particles goes to infinity (see~\eqref{eq:gradient}). Notice that the first two terms of the r.h.s. of~\eqref{eq:ender} are negative.

Let us now deal with the third term of the r.h.s. of~\eqref{eq:ender}.

From the Cauchy-Schwarz inequality and~\eqref{eq:kerLip} it is easy to see that 
\begin{align}\label{eq:us1}
\notag
 &\sum_{k=0}^{N-1} \int_{kc_L/N}^{(k+1)c_L/N}   \frac{N}{c_L}|F_k(t)| | K'\ast \rho^N_L (t,X(t,z))  -  {K^\lin}'(\rho^N_L) \ast \rho^N_L (t,x_k) | \dz \\
 \leq & \| K'' \|_{L^\infty} \left( \sum_{k=0}^{N-1} \frac{(x_{k+1} - x_k)\sqrt{c_L}}{\sqrt{N}} \right) \left( \sum_{k=0}^{N-1} \int_{kc_L/N}^{(k+1)c_L/N}  \Bigl|\frac{N}{c_L}F_k(t)\Bigr|^2 \dz \right)^{1/2} \notag \\
 & +\frac{ L\sqrt{c_L}C(\|K''\|_{L^\infty}, \|K'\|_{L^\infty})}{\sqrt{N}}\left( \sum_{k=0}^{N-1} \int_{kc_L/N}^{(k+1)c_L/N}  \Bigl|\frac{N}{c_L}F_k(t)\Bigr|^2 \dz \right)^{1/2}\notag\\
 \leq &  \frac{ L\sqrt{c_L} C(\|K''\|_{L^\infty}, \|K'\|_{L^\infty}) }{\sqrt{N}} \left( \sum_{k=0}^{N-1} \int_{kc_L/N}^{(k+1)c_L/N}  \Bigl|\frac{N}{c_L}F_k(t)\Bigr|^2 \dz \right)^{1/2}
\end{align}

Denoting for simplicity
\[  a_k = \frac{N}{c_L} F_k(t) , \qquad b_k = K' \ast \rho^N_L (t,X(t,z)) \chi_{[kc_L/N, (k+1)c_L/N)}(z),   \]
 we can distinguish two different cases: either 
\begin{equation}\label{eq:case2}
 |a_k|<2|b_k|
\end{equation}
or 
\begin{equation}   \label{eq:case1}
 |a_k|\geq2|b_k|.
\end{equation}

Let us now estimates the terms of the sum in~\eqref{eq:us1} where~\eqref{eq:case2} holds.

From~\eqref{eq:us1},~\eqref{eq:case2} and~\eqref{eq:kerbound} we deduce that 
\begin{align}
\label{eq:casoI}
 \sum_{|a_k|<2|b_k|} \int_{kc_L/N}^{(k+1)c_L/N} & \frac{N}{c_L}|F_k(t)| | K'\ast \rho^N_L (t,X(t,z))  -  {K^\lin}'(\rho^N_L) \ast \rho^N_L (t,x_k) | \dz\notag\\
 &\leq  \frac{ L\sqrt{c_L}C(\| K'' \|_{L^\infty},\|K'\|_{L^\infty})}{\sqrt{N}} \left( \sum_{|a_k|<2|b_k|} \int_{kc_L/N}^{(k+1)c_L/N}  |a_k|^2 \dz \right)^{1/2}\notag\\
 &\leq \frac{L\sqrt{c_L} C(\| K'' \|_{L^\infty},\|K'\|_{L^\infty})}{\sqrt{N}} \left( \sum_{|a_k|<2|b_k|} \int_{kc_L/N}^{(k+1)c_L/N}  (c_L)^2\|K'\|^2_{L^\infty} \dz \right)^{1/2}\notag\\
 &\leq \frac{ L(c_L)^{2}C(\| K'' \|_{L^\infty},\|K'\|_{L^\infty})}{\sqrt{N}}. 
\end{align}

Let us now deal with the terms of the sum in~\eqref{eq:us1} satisfying~\eqref{eq:case1}. By~\eqref{eq:us1},
\begin{align*}
 \sum_{|a_k|\geq2|b_k|} \int_{kc_L/N}^{(k+1)c_L/N} & \frac{N}{c_L}|F_k(t)| | K'\ast \rho^N_L (t,X(t,z))  -  {K^\lin}'(\rho^N_L) \ast \rho^N_L (t,x_k) | \dz\notag\\
&\leq  \frac{ L\sqrt{c_L} C(\|K''\|_{L^\infty}, \|K'\|_{L^\infty})}{\sqrt{N}} \left( \sum_{|a_k|\geq2|b_k|} \int_{kc_L/N}^{(k+1)c_L/N}  |a_k|^2 \dz \right)^{1/2}.
\end{align*} 
To estimate the above there are two cases. Either 
\[
\left(\sum_{|a_k|\geq2|b_k|} \int_{kc_L/N}^{(k+1)c_L/N}  |a_k|^2 \dz \right)^{1/2}\leq1
\]
and then 
\begin{align}\label{eq:casoII}
 &\sum_{|a_k|\geq2|b_k|} \int_{kc_L/N}^{(k+1)c_L/N}  \frac{N}{c_L}| F_k(t)| | K' \ast \rho^N_L (t,X(t,z))  -  {K^\lin}'(\rho^N_L) \ast \rho^N_L (t,x_k) | \dz\leq \frac{ L\sqrt{c_L}C(\|K''\|_{L^\infty}, \|K'\|_{L^\infty})}{\sqrt{N}}
\end{align}
or 
\[
\left(\sum_{|a_k|\geq2|b_k|} \int_{kc_L/N}^{(k+1)c_L/N}  |a_k|^2 \dz \right)^{1/2}>1
\]
and then using~\eqref{eq:case1}
\begin{align*}
 \sum_{|a_k|\geq2|b_k|} \int_{kc_L/N}^{(k+1)c_L/N} &  \frac{N}{c_L} |F_k(t)| | K' \ast \rho^N_L (t,X(t,z))  -  {K^\lin}'(\rho^N_L) \ast \rho^N_L (t,x_k) | \dz\leq\notag\\
 &\leq \frac{ L\sqrt{c_L}C(\|K''\|_{L^\infty}, \|K'\|_{L^\infty})}{\sqrt{N}} \sum_{|a_k|\geq2|b_k|} \int_{kc_L/N}^{(k+1)c_L/N}  |a_k|^2 \dz\notag\\
&\leq \frac{4  L\sqrt{c_L}C(\|K''\|_{L^\infty}, \|K'\|_{L^\infty})}{\sqrt{N}}\sum_{|a_k|\geq2|b_k|} \int_{kc_L/N}^{(k+1)c_L/N}  |a_k+b_k|^2 \dz
\end{align*}
Hence, in this case, if $N$ is large enough
\begin{align}\label{eq:casoIII}
- \sum_{|a_k|\geq2|b_k|} \int_{kc_L/N}^{(k+1)c_L/N} \Bigl\{\frac{1}{2}&\Bigl[ \frac{N}{c_L} F_k(t) + K' \ast \rho^N_L (t,X(z)) \Bigr]^2 \notag\\
&- \frac{N}{c_L}
F_k(t) [ K'\ast \rho^N_L (t,X(z))  -  {K^\lin}'(\rho^N_L) \ast \rho^N_L (t,x_k) ]\Bigr\}\dz\leq\notag\\
&\leq\sum_{|a_k|\geq2|b_k|}\int_{kc_L/N}^{(k+1)c_L/N}|a_k+b_k|^2 \Big[-\frac12+\frac{4  L\sqrt{c_L}C(\|K''\|_{L^\infty}, \|K'\|_{L^\infty})}{\sqrt{N}}\Big]<0.
\end{align}

Finally, by estimates~\eqref{eq:kerbound} and~\eqref{eq:kerLip}, and by the fact that
\[
 \sum_{k=0}^{N-1}(x_{k+1}-~x_k)=~L,
\] one has that
\begin{align}\label{eq:d/dtE}
\notag
&\sum_{k=0}^{N-1} \int_{kc_L/N}^{(k+1)c_L/N} |K' \ast \rho^N_L (t,X(t,z))| \bigl|K' \ast \rho^N_L (t,X(t,z)) -  {K^\lin}'(\rho^N_L) \ast \rho^N_L (t,X(t,z)) \bigr| \dz \\
\notag
&\leq \| K'\|_{L^\infty} \Bigl[ \frac{ \| K''\|_{L^\infty}L}{{N}} + \frac{L \| K''\|_{L^\infty} + 3\| K'\|_{L^\infty}}{N} \Bigr] \\
&\leq \bar C(\| K'\|_{L^\infty}, \| K''\|_{L^\infty}) \frac{L}{{N}}.
\end{align}

We conclude the proof starting from~\eqref{eq:ender} and gathering together~\eqref{eq:casoI},~\eqref{eq:casoII},~\eqref{eq:casoIII} and~\eqref{eq:d/dtE}. 
\end{proof}

\section{$L^\infty$ bound}

The aim of this section is to prove Theorem~\ref{thm:existenceapprox}.

We will need a series of preliminary lemmas. The main idea is to estimate the $L^\infty$ norm of the discrete particle approximations via an estimate on their discrete $W^{1,2}$ norm in the pseudo-inverse variables. In turn, estimates on such $W^{1,2}$ norm will be provided by the explicit formula for the derivative of the functional $\Fcal_L$ along the deterministic particle approximations computed in the previous section, together with the fact that the energy is essentially decreasing along the flow (see Corollary~\ref{cor:energybound}). By discrete $W^{1,2}$ norm of $\phi(\rho^N_L)$ in the pseudo-inverse variables we mean the quantity

\begin{equation}\label{eq:w12normdisc}
\frac{N}{c_L}\sum_{k=0}^{N-1}|\phi(\rho_{k+1})-\phi(\rho_k)|^2=\sum_{k=1}^{N-1}\int_{kc_L/N}^{(k+1)c_L/N}\frac{N^2}{c_L^2}|\phi(\rho_{k+1})-\phi(\rho_k(t))|^2\dz.
\end{equation}

One has the following Gronwall-type estimate on the time  derivative of the quantity~\eqref{eq:w12normdisc}.
\begin{lemma}\label{lemma:w12estder}
 Let $\rho_{0,L}\in L^1(\T_L)$ such that $\Fcal_L(\rho_{0,L})\leq C_0$. Let $T>0$ be such that  $\{\rho^N_L\}_N$, i.e. the discrete particle approximation starting from $\rho_{0,L}$, is well defined on $[0,T]$ and let $c_0$ the constant appearing in~\eqref{eq:phi4} and~\eqref{eq:phi5}. Then,  there exists $C=C(c_0, C_0, c_2,\bar{\rho}, K)$ and $\bar N=\bar N(L,T,C_0)$ such that for all $N\geq \bar N$ and  for all $t\in[0,T)$  one has that
 \begin{align}\label{eq:gronwder}
 \frac{d}{dt}&\sum_{k=1}^{N-1}\int_{kc_L/N}^{(k+1)c_L/N}\frac{N^2}{c_L^2}|\phi(\rho_{k+1}(t))-\phi(\rho_k(t))|^2\dz\leq\notag\\
 &\leq C(c_0, C_0,c_2,\bar{\rho}, K)+\frac{(2c_L\|K'\|_{L^\infty})^2}{c_2}\sum_{k=1}^{N-1}\int_{kc_L/N}^{(k+1)c_L/N}\frac{N^2}{c_L^2}|\phi(\rho_{k+1}(t))-\phi(\rho_k(t))|^2\dz.
 \end{align}

\end{lemma}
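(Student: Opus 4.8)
The plan is to differentiate the discrete $W^{1,2}$ quantity
$\sum_k \frac{N}{c_L}|\phi(\rho_{k+1})-\phi(\rho_k)|^2$ directly, using the ODE system~\eqref{eq:ode}. Writing $\psi_k := \phi(\rho_k(t))$ and recalling $\rho_k = \tfrac{c_L}{N(x_{k+1}-x_k)}$, the chain rule gives $\dot\psi_k = \phi'(\rho_k)\dot\rho_k = -\phi'(\rho_k)\rho_k^2 \tfrac{N}{c_L}(\dot x_{k+1}-\dot x_k)$. Substituting the right-hand side of~\eqref{eq:ode} for $\dot x_{k+1}-\dot x_k$ splits $\dot\psi_k$ into a ``kernel part'' coming from $-\tfrac{c_L}{N}\sum_j K'(x_k-x_j)$ and a ``diffusion part'' coming from $-\tfrac{N}{c_L}(\psi_k-\psi_{k-1})=-\tfrac{N}{c_L}F_k$. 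Then
\[
\frac{d}{dt}\frac{N}{c_L}\sum_k (\psi_{k+1}-\psi_k)^2
= \frac{2N}{c_L}\sum_k (\psi_{k+1}-\psi_k)(\dot\psi_{k+1}-\dot\psi_k).
\]

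First I would isolate the leading diffusion contribution. The term in $\dot\psi_{k+1}-\dot\psi_k$ proportional to $F_{k+1}-2F_k+F_{k-1}$ (a discrete second difference of $\psi$) produces, after discrete summation by parts, a manifestly nonpositive quadratic form — this is the discrete analogue of the dissipative structure that makes $\phi(\rho)$ gain a spatial derivative, and it is the reason the estimate closes rather than blowing up. The remaining pieces I would control by absolute values: the factor $\phi'(\rho_k)\rho_k^2$ is the obstacle and must be handled using the structural assumptions. By~\eqref{eq:phi4}, $\phi'(\rho)\rho \le c_0\phi(\rho)$, so $\phi'(\rho_k)\rho_k^2 \le c_0\rho_k\phi(\rho_k)$; for small $\rho_k$ this is bounded by $c_0 c_1\rho_k^2 \le c_0 c_1$ using~\eqref{eq:phirhocond} (since one may assume the relevant $\rho_k\le 1$ in the small regime), while in the large regime $\rho_k\le \tfrac{1}{c_2}\phi(\rho_k)$ turns a factor $\rho_k$ into $\phi(\rho_k)$, which is what generates the constant $c_2$ in the denominator of~\eqref{eq:gronwder}. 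The cross terms between the kernel part and $(\psi_{k+1}-\psi_k)$ are estimated by~\eqref{eq:kerbound}: $|K'\ast\rho^N_L|\le c_L\|K'\|_{L^\infty}$, and after a Cauchy–Schwarz (or Young) split one obtains a bound of the form $C + \tfrac{(2c_L\|K'\|_{L^\infty})^2}{c_2}\sum_k\tfrac{N}{c_L}(\psi_{k+1}-\psi_k)^2$, which is exactly the Gronwall term on the right of~\eqref{eq:gronwder}. The energy bound enters here: by Corollary~\ref{cor:energybound}, for $N\ge\bar N(L,T,C_0)$ one has $\Fcal_L(\rho^N_L(t))\le C_0+2\eps$, and since $W\ge 0$ this gives $\int W(\rho^N_L(t))\le C_0 + C(K) + 2\eps$ after absorbing the (bounded) kernel energy; via~\eqref{eq:phi5}, $\phi(\rho)\le\max\{\rho,c_0 W(\rho)\}$, this converts into an $L^1$-type bound $\sum_k (x_{k+1}-x_k)\phi(\rho_k)\lesssim C(c_0,C_0,K)$, which is precisely what is needed to bound the ``constant'' terms $C(c_0,C_0,c_2,\bar\rho,K)$ coming from the many places where a bare $\phi(\rho_k)$ or $\rho_k\phi(\rho_k)$ appears weighted by $(x_{k+1}-x_k)$.

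The main obstacle I expect is bookkeeping the numerous error terms produced by replacing $K'\ast\rho^N_L$ evaluated at interpolated points by its values at the particle nodes, i.e.\ controlling everywhere the difference $K'\ast\rho^N_L - {K^\lin}'(\rho^N_L)\ast\rho^N_L$; but~\eqref{eq:kerLip} bounds this by $\|K''\|_{L^\infty}(x_{k+1}-x_k) + c_L\tfrac{L\|K''\|_{L^\infty}+3\|K'\|_{L^\infty}}{N}$, and after one more Cauchy–Schwarz against $\tfrac{N}{c_L}|F_k|$ and summing (using $\sum_k(x_{k+1}-x_k)=L$) these contribute either to the constant $C$ or, for $N$ large, to a vanishing correction absorbed into it. The genuinely delicate point is matching the sign in the discrete dissipation term: one must verify that the discrete second-difference term really does have the right sign after summation by parts (using periodicity of the torus, as in the proof of Lemma~\ref{lemma:decrease} where $\sum_k F_k^2=\sum_k F_{k+1}^2$), so that it can be discarded rather than needing to be estimated — otherwise the linear-in-$\sum(\psi_{k+1}-\psi_k)^2$ term would have too large a coefficient for the subsequent Gronwall step (used later to prove Theorem~\ref{thm:existenceapprox}) to give a $T$-dependent but finite bound.
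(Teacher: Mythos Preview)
Your overall strategy matches the paper's: differentiate, perform a discrete summation by parts to bring out the second difference $D_k:=\psi_{k+1}+\psi_{k-1}-2\psi_k$, identify the negative term $-\sum_k(\tfrac{N^2}{c_L^2}D_k)^2\phi'(\rho_k)\rho_k^2$, and control the kernel error terms by absorbing half of them into this dissipation (your Young split is equivalent to the paper's splitting on the sets $A$, $B$). The $K''$ contribution is indeed handled exactly as you describe, via $\phi'(\rho)\rho\le c_0\phi(\rho)$, then~\eqref{eq:phi5}, then the energy bound from Corollary~\ref{cor:energybound}.

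There is, however, a genuine gap in your treatment of the $K'$ contribution. After Young and the structural inequalities $\phi'(\rho)\rho\le c_0\phi(\rho)$ and $\rho\le\max\{\bar\rho,\phi(\rho)/c_2\}$, what remains is a term of size $(2c_L\|K'\|_{L^\infty})^2\tfrac{c_0}{c_2}\int_0^{c_L}\phi(\rho^N_L)^2\,dz$ in the pseudo-inverse variable. You then claim this gives directly the Gronwall term $\tfrac{(2c_L\|K'\|_{L^\infty})^2}{c_2}\sum_k\tfrac{N}{c_L}(\psi_{k+1}-\psi_k)^2$, but there is no immediate reason why $\int_0^{c_L}\phi^2\,dz$ should be controlled by the discrete $W^{1,2}$ norm of $\phi$. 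The $L^1$ bound $\int_{\T_L}\phi(\rho)\,dx\lesssim C(c_0,C_0,K)$ you invoke from the energy is the wrong quantity here: in pseudo-inverse variables that is $\int_0^{c_L}\phi(\rho)\rho^{-1}\,dz$, not $\int_0^{c_L}\phi^2\,dz$. The missing step is a \emph{discrete Poincar\'e inequality}: since $\sum_k(x_{k+1}-x_k)=L$, there always exists an index $k(N,t)$ with $\rho_k< c_L/L$, and anchoring at this index gives
\[
\int_0^{c_L}\phi(\rho^N_L)^2\,dz\;\le\;c_2\bar\rho^2+\frac{1}{c_2}\phi\!\Bigl(\frac{c_L}{L}\Bigr)^{\!2}+\frac{1}{c_2}\sum_{k}\frac{N}{c_L}\bigl|\phi(\rho_{k+1})-\phi(\rho_k)\bigr|^2.
\]
This is what produces the right-hand side of~\eqref{eq:gronwder}. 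Without it the argument does not close, because at this stage you have no a~priori $L^\infty$ bound on $\rho^N_L$ (that is precisely what this lemma is used to prove), so $\int\phi^2\,dz$ cannot be bounded by any quantity already under control.

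A minor remark: the bookkeeping you anticipate concerning $K'\ast\rho^N_L-{K^\lin}'(\rho^N_L)\ast\rho^N_L$ is unnecessary here. The paper works directly with the ODE~\eqref{eq:ode}, and the only kernel estimate needed is the elementary bound
\[
\Bigl|\dot x_{k+1}-\dot x_k+\frac{N}{c_L}D_k\Bigr|\le \frac{c_L\|K'\|_{L^\infty}}{N}+\|K''\|_{L^\infty}(x_{k+1}-x_k),
\]
which follows immediately from~\eqref{eq:ode} without passing through the PDE formulation~\eqref{eq:pdeapprox}.
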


\begin{proof}
	One has that, using discrete integration by parts,
	\begin{align}
	 \frac{d}{dt}\sum_{k=1}^{N-1}\int_{kc_L/N}^{(k+1)c_L/N}&\frac{N^2}{c_L^2}|\phi(\rho_{k+1}(t))-\phi(\rho_k(t))|^2\dz=\notag\\
	 =&2\sum_{k=1}^{N-1}\int_{kc_L/N}^{(k+1)c_L/N}\frac{N^2}{c_L^2}[\phi(\rho_{k+1}(t))-\phi(\rho_k(t))][\phi'(\rho_{k+1}(t))\partial_t\rho_{k+1}(t)-\phi'(\rho_k(t))\partial_t\rho_k(t)]\dz\notag\\
	 =&-2\sum_{k=1}^{N-1}\int_{kc_L/N}^{(k+1)c_L/N}\frac{N^2}{c_L^2}(\phi(\rho_{k+1}(t))+\phi(\rho_{k-1}(t))-2\phi(\rho_k(t)))\phi'(\rho_k(t))\partial_t\rho_k(t)\dz.\label{eq:derest1}
	\end{align}
	By definition of $\rho_k$ one sees immediately that 
	\begin{equation}\label{eq:dtrhok}
	\partial_t\rho_k=-\rho_k^2\frac{N}{c_L}(\dot{x}_{k+1}-\dot{x}_k)
	\end{equation}
	Insert the above expression in~\eqref{eq:derest1} and   use the estimate 
	 \begin{equation*}
	\Bigl|\dot{x}_{k+1}-\dot{x}_k+\frac{N}{c_L}(\phi(\rho_{k+1})+\phi(\rho_{k-1})-2\phi(\rho_k))\Bigr|\leq\frac{c_L\|K'\|_{L^\infty}}{N}+\|K''\|_{L^\infty}(x_{k+1}-x_k),
	\end{equation*}
	which follows easily from the ODE~\eqref{eq:ode}.
	Then recalling that $\rho_k=\frac{c_L}{N(x_{k+1}-x_k)}$,
	 one obtains that
	\begin{align}
	 \frac{d}{dt}&\sum_{k=1}^{N-1}\int_{kc_L/N}^{(k+1)c_L/N}\frac{N^2}{c_L^2}|\phi(\rho_{k+1}(t))-\phi(\rho_k(t))|^2\dz\leq\notag\\
	  &\leq-2\sum_{k=1}^{N-1}\int_{kc_L/N}^{(k+1)c_L/N}\Bigl(\frac{N^2}{c_L^2}(\phi(\rho_{k+1}(t))+\phi(\rho_{k-1}(t))-2\phi(\rho_k(t)))\Bigr)^2\phi'(\rho_k(t))\rho_k^2(t)\dz\label{eq:derest2}\\
	  &+2c_L\|K'\|_{L^\infty}\sum_{k=1}^{N-1}\int_{kc_L/N}^{(k+1)c_L/N}\Bigl(\frac{N^2}{c_L^2}|\phi(\rho_{k+1}(t))+\phi(\rho_{k-1}(t))-2\phi(\rho_k(t))|\Bigr)\phi'(\rho_k(t))\rho_k^2(t)\dz\label{eq:derest3}\\
	  &+2\|K''\|_{L^\infty}\sum_{k=1}^{N-1}\int_{kc_L/N}^{(k+1)c_L/N}\Bigl(\frac{N^2}{c_L^2}|\phi(\rho_{k+1}(t))+\phi(\rho_{k-1}(t))-2\phi(\rho_k(t))|\Bigr)\phi'(\rho_k(t))\rho_k(t)\dz.\label{eq:derest4}
	  \end{align}
	  
	  Notice that by monotonicity of $\phi$ the terms in~\eqref{eq:derest2} are negative. These will be used to bound the terms in~\eqref{eq:derest3} and~\eqref{eq:derest4} up to a Gronwall-type inequality.

     Let us consider the term in~\eqref{eq:derest4}. Define
	  \begin{equation*}
	  A:=\Bigl\{z:\,\frac{N^2}{c_L^2}|\phi(\rho_{k+1}(t))+\phi(\rho_{k-1}(t))-2\phi(\rho_k(t))|\rho_k(t)\leq 2\|K''\|_{L^\infty}\Bigr\}.
	  \end{equation*} One has that
	  \begin{align}
	  &-\sum_{k=1}^{N-1}\int_{kc_L/N}^{(k+1)c_L/N}\Bigl(\frac{N^2}{c_L^2}(\phi(\rho_{k+1}(t))+\phi(\rho_{k-1}(t))-2\phi(\rho_k(t)))\Bigr)^2\phi'(\rho_k(t))\rho_k^2(t)\dz+\notag\\
	  &+2\|K''\|_{L^\infty}\sum_{k=1}^{N-1}\int_{kc_L/N}^{(k+1)c_L/N}\Bigl(\frac{N^2}{c_L^2}|\phi(\rho_{k+1}(t))+\phi(\rho_{k-1}(t))-2\phi(\rho_k(t))|\Bigr)\phi'(\rho_k(t))\rho_k(t)\dz\leq\notag\\
	  &\leq\sum_{k=1}^{N-1}\int_{[kc_L/N,(k+1)c_L/N]\cap A}(2\|K''\|_{L^\infty})^2\phi'(\rho_k(t))\dz\label{eq:derest5}\\
	  &\leq(2\|K''\|_{L^\infty})^2\int_{\T_L}\phi'(\rho^N_L(t,x))\rho^N_L(t,x)\dx\label{eq:derest6}\\
	  &\leq c_0(2\|K''\|_{L^\infty})^2\int_{\T_L}\phi(\rho^N_L(t,x))\dx\label{eq:derest7}\\
	  &\leq(2\|K''\|_{L^\infty})^2 \max \Bigl\{c_0,\,c_0^2\int_{\T_L}W(\rho^N_L(t,x))\dx\Bigr\}\label{eq:derest8}\\
	  &\leq c_0(2\|K''\|_{L^\infty})^2+ c_0^2(2\|K''\|_{L^\infty})^2\Bigl(\Fcal_L(\rho^N_L(0))+\frac{C(\|K'\|_{L^\infty}, \|K''\|_{L^\infty})L}{\sqrt{N}}T+\|K\|_{L^\infty}\Bigr)\label{eq:derest9}\\
	  &\leq \bar C(c_0, C_0, K)  
	  \end{align}
	  as soon as $N\geq\bar N(L,T,C_0)$. 
	  In the above inequalities we used the following: the monotonicity of $\phi$ from~\eqref{eq:derest5} to~\eqref{eq:derest6}; inequality~\eqref{eq:phi4} from~\eqref{eq:derest6} to~\eqref{eq:derest7}; inequality~\eqref{eq:phi5} from~\eqref{eq:derest7} to~\eqref{eq:derest8}; the estimate~\eqref{eq:gradient} from~\eqref{eq:derest8} to~\eqref{eq:derest9}; in the last estimate we used the bound on $\Fcal_L(\rho^N_L(0))$ by the constant $C_0$.

	  Let us now consider the term~\eqref{eq:derest3}. Define 
	
	 \begin{equation*}
	B:=\Bigl\{z:\,\frac{N^2}{c_L^2}|\phi(\rho_{k+1}(t))+\phi(\rho_{k-1}(t))-2\phi(\rho_k(t))|\leq 2c_L\|K'\|_{L^\infty}\Bigr\}.
	\end{equation*}
	
	One has that~
	\begin{align*}
 &-\sum_{k=1}^{N-1}\int_{kc_L/N}^{(k+1)c_L/N}\Bigl(\frac{N^2}{c_L^2}(\phi(\rho_{k+1}(t))+\phi(\rho_{k-1}(t))-2\phi(\rho_k(t)))\Bigr)^2\phi'(\rho_k(t))\rho_k^2(t)\dz+\notag\\
 &+2c_L\|K'\|_{L^\infty}\sum_{k=1}^{N-1}\int_{kc_L/N}^{(k+1)c_L/N}\Bigl(\frac{N^2}{c_L^2}|\phi(\rho_{k+1}(t))+\phi(\rho_{k-1}(t))-2\phi(\rho_k(t))|\Bigr)\phi'(\rho_k(t))\rho_k^2(t)\dz\notag\\
 &\leq\sum_{k=1}^{N-1}\int_{[kc_L/N,(k+1)c_L/N]\cap B}(2c_L\|K'\|_{L^\infty})^2\phi'(\rho_k(t))\rho_k^2(t)\dz.
 \end{align*}.
 By positivity of $\phi'$, \eqref{eq:phi4} and \eqref{eq:phirhocond} we get
 \begin{align}
 \sum_{k=1}^{N-1}\int_{[kc_L/N,(k+1)c_L/N]\cap B}&(2c_L\|K'\|_{L^\infty})^2\phi'(\rho_k(t))\rho_k^2(t)\dz\leq\notag\\
 &\leq\sum_{k=1}^{N-1}\int_{[kc_L/N,(k+1)c_L/N]}(2c_L\|K'\|_{L^\infty})^2\phi'(\rho_k(t))\rho_k^2(t)\dz\label{eq:c01}\\
 &\leq c_0\int_0^{c_L}(2c_L\|K'\|_{L^\infty})^2\phi(\rho^N_L(t, X(t,z)))\max\Bigl\{\bar{\rho}, \frac{\phi(\rho^N_L(t, X(t,z)))}{c_2}\Bigr\}\dz.\label{eq:c02}
\end{align}
Now we use the fact that due to the relation
\[
\sum_{k=0}^{N-1}(x_{k+1}-x_k)=L
\]
there exists always a $k=k(N,t)$ such that $\rho_k<\frac{c_L}{L}$ and we apply the following discrete Poincar\'e inequality
\begin{align*}
\int_0^{c_L}\phi(\rho^N_L(t, X(t,z)))&\max\Bigl\{\bar{\rho}, \frac{\phi(\rho^N_L(t, X(t,z)))}{c_2}\Bigr\}\dz\leq c_2\bar{\rho}^2+\frac{1}{c_2} \phi^2\Bigl(\frac{c_L}{L}\Bigr) \notag\\&+\frac{1}{c_2} \sum_{k=1}^{N-1}\int_{kc_L/N}^{(k+1)c_L/N}\frac{N^2}{c_L^2}|\phi(\rho_{k+1}(t))-\phi(\rho_k(t))|^2\dz.
\end{align*}

	\end{proof}

The following lemma relates the discrete $W^{1,2}$ norm of $\phi(\rho^N_L)$ in the pseudo-inverse variables with the values of the energy along the flow. 

\begin{lemma}\label{lemma:w12estint} For any $C_0>0$, $T>0$ there exist a constant $C(K, C_0)$ and $\bar N=\bar N(T, C_0,L)$ such that the following holds. Let $\rho_{0,L}\in L^1(\T_L)$ with $\Fcal_L(\rho_{0,L})\leq C_0$ and assume that  $\{\rho^N_L\}_{N\geq\bar N}$, i.e. the deterministic particle approximation starting from $\rho_{0,L}$, is well defined on $[0,T)$. Then, the following holds
		\begin{equation*}
	\sup_{N\geq \bar N}\int_0^T\sum_{k=0}^{N-1}N|\phi (\rho_k(t))-\phi(\rho_{k-1}(t))|^2\dt\leq C(K,C_0)(1+T).
	\end{equation*}

\end{lemma}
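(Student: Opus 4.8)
The plan is to deduce Lemma~\ref{lemma:w12estint} from the differential inequality of Lemma~\ref{lemma:w12estder} by a Gronwall argument, being careful about the coefficient $(2c_L\|K'\|_{L^\infty})^2/c_2$ in front of the $W^{1,2}$ quantity. Write
\[
g(t):=\sum_{k=1}^{N-1}\int_{kc_L/N}^{(k+1)c_L/N}\frac{N^2}{c_L^2}|\phi(\rho_{k+1}(t))-\phi(\rho_k(t))|^2\dz
=\frac{N}{c_L}\sum_{k=1}^{N-1}|\phi(\rho_{k+1}(t))-\phi(\rho_k(t))|^2,
\]
so that the quantity to be bounded in the lemma is (up to the harmless factor $c_L\le 1$ and the one missing endpoint term, which is controlled by the $L^\infty$ estimates or simply absorbed) essentially $\int_0^T g(t)\dt$. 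Lemma~\ref{lemma:w12estder} gives $g'(t)\le C(c_0,C_0,c_2,\bar\rho,K)+\alpha\, g(t)$ with $\alpha=(2c_L\|K'\|_{L^\infty})^2/c_2$, for $N\ge\bar N(L,T,C_0)$ and $t\in[0,T)$. A direct application of the integral form of Gronwall's lemma then yields $g(t)\le (g(0)+Ct)e^{\alpha t}$, hence $\int_0^T g(t)\dt\le (g(0)+CT)\,\frac{e^{\alpha T}-1}{\alpha}$, which is of the form $C(K,C_0)(1+T)$ provided $g(0)$ is controlled by $C_0$.

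The first genuine step is therefore to bound the initial value $g(0)$. Here I would exploit the definition of the deterministic particle approximation: $\rho_k(0)$ is the value of the piecewise constant interpolation of $\rho_{0,L}$, and since each cell carries mass $c_L/N$, one has $\frac{c_L}{N}\sum_k W(\rho_k(0))=\int_{\T_L}W(\rho^N_L(0))\dx\le \Fcal_L(\rho^N_L(0))+\|K\|_{L^\infty}\le C_0+\|K\|_{L^\infty}+o(1)$ since $\rho^N_L(0)\to\rho_{0,L}$ in $L^1$ and the energy is lower semicontinuous / converges. Combining this with~\eqref{eq:phi5} (to pass from $W$ to $\phi$) and the elementary inequality $|\phi(\rho_{k+1})-\phi(\rho_k)|^2\le 2(\phi(\rho_{k+1})^2+\phi(\rho_k)^2)$ together with~\eqref{eq:phi4}, one gets $g(0)\le C(c_0,K,C_0)N/c_L\cdot(\text{something})$ — which is \emph{not} bounded uniformly in $N$. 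This is the subtle point: the naive bound on $g(0)$ blows up.

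The remedy, which I expect is the intended one, is \textbf{not} to start the Gronwall estimate at $t=0$ but to use an averaging/mean-value argument together with Corollary~\ref{cor:energybound}. Indeed, integrating the trivial bound is the wrong order of operations; instead one should note that it suffices to prove the time-integrated bound, and integrate the differential inequality differently. Concretely, rewrite Lemma~\ref{lemma:w12estder} as $g'(t)-\alpha g(t)\le C$, multiply by $e^{-\alpha t}$, and integrate on $[s,T]$ to get $\int_s^T g(t)\dt\le \frac{1}{\alpha}\big(g(s)+C(T-s)\big)e^{\alpha(T-s)}$ — still needs $g(s)$ for some $s$. The clean way is the following: the true input is that $g(t)$ itself appears, up to lower order, inside the time derivative of the \emph{energy} via~\eqref{eq:ender}. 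Going back to Lemma~\ref{lemma:decrease}, the terms $-\frac12\sum_k\int[\tfrac{N}{c_L}F_k+K'\ast\rho^N_L]^2\dz$ in~\eqref{eq:ender} control, after expanding the square and using~\eqref{eq:kerbound} to bound the cross term and $\int(K'\ast\rho^N_L)^2\le c_L^2\|K'\|_\infty^2$, a positive multiple of $\frac{N^2}{c_L^2}\sum_k\int F_k^2\dz$ minus a constant; integrating~\eqref{eq:ender} in time and using Corollary~\ref{cor:energybound} (which bounds $\Fcal_L(\rho^N_L(T))-\Fcal_L(\rho^N_L(0))$ from below by $-2\eps$ for $N$ large) gives directly $\int_0^T\frac{N^2}{c_L^2}\sum_k\int_{kc_L/N}^{(k+1)c_L/N}F_k(t)^2\dz\,\dt\le C(K,C_0)(1+T)$. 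Since $\frac{N^2}{c_L^2}\sum_k\int F_k^2\dz=\frac{N}{c_L}\sum_k|\phi(\rho_k)-\phi(\rho_{k-1})|^2=\frac{N}{c_L}\sum_k|\phi(\rho_{k+1})-\phi(\rho_k)|^2$ by the periodic relabelling already used in the proof of Lemma~\ref{lemma:decrease}, and $c_L\le1$, this is exactly the claimed estimate $\int_0^T\sum_k N|\phi(\rho_k)-\phi(\rho_{k-1})|^2\dt\le C(K,C_0)(1+T)$.

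In summary: (i) expand the first negative term of~\eqref{eq:ender}, absorb the nonlocal cross-term with Cauchy–Schwarz and~\eqref{eq:kerbound}–\eqref{eq:kerLip}, to get a lower bound of the form $\frac{d}{dt}\Fcal_L(\rho^N_L(t))\le -c\,\frac{N}{c_L}\sum_k|F_k(t)|^2+C\frac{L}{\sqrt N}$ for some $c=c(c_L,\|K'\|_\infty)>0$; (ii) integrate over $[0,T]$; (iii) invoke Corollary~\ref{cor:energybound} to bound $\Fcal_L(\rho^N_L(T))$ from below by $\Fcal_L(\rho_{0,L})-2\eps\ge -\|K\|_\infty-2\eps$ and $\Fcal_L(\rho^N_L(0))$ from above by $C_0+o(1)$, for $N\ge\bar N(T,C_0,L)$; (iv) conclude. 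The main obstacle is step~(i): one must check that after subtracting the cross term $2\cdot\frac{N}{c_L}F_k\cdot(K'\ast\rho^N_L)$, whose sum over $k$ is $O(c_L\|K'\|_\infty)\cdot(\frac{N}{c_L}\sum_k|F_k|^2)^{1/2}$ after integration, what survives is still a \emph{definite negative} multiple of the $W^{1,2}$ quantity plus an $O(1)$ error — i.e. Young's inequality with the right split, exactly the $|a_k|\ge 2|b_k|$ versus $|a_k|<2|b_k|$ dichotomy already carried out in the proof of Lemma~\ref{lemma:decrease}, reused here with the opposite sign bookkeeping. This dichotomy handles the borderline cells where the local term does not dominate, and on the rest one genuinely gains coercivity.
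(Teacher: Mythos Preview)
Your final approach is essentially the paper's: one integrates the energy-dissipation inequality coming from~\eqref{eq:ender} in the form $\frac{d}{dt}\Fcal_L(\rho^N_L)\le -\frac{c_L}{N}\sum_k|\dot x_k|^2+O(L/\sqrt N)$, bounds the energy drop using Corollary~\ref{cor:energybound} together with $\min\Fcal_L\ge-\|K\|_{L^\infty}$, and then uses the dichotomy $|a_k|\gtrless 2|b_k|$ (with $\dot x_k=a_k+b_k$, $a_k=-\tfrac{N}{c_L}F_k$, $b_k$ the nonlocal part) to extract $\int_0^T N\sum_k F_k^2\,\dt$ from $\int_0^T\tfrac{c_L}{N}\sum_k|\dot x_k|^2\,\dt$, the ``bad'' cells $|a_k|\le 2|b_k|$ contributing only $Tc_L^4\|K'\|_{L^\infty}^2$. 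Your initial detour through Gronwall on Lemma~\ref{lemma:w12estder} is indeed a dead end for exactly the reason you identify ($g(0)$ is not controlled uniformly in $N$); in the paper that lemma is used only \emph{after} the present one, to upgrade the time-integrated bound to a pointwise-in-time bound (Corollary~\ref{cor:w12est}).
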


\begin{proof}
	
	Denoting for simplicity 
	\[a_k(t)= -\frac{N}{c_L}\big(\phi(\rho_{k}) - \phi(\rho_{k-1})\big)  \quad \,\,\mbox{ and } \quad\,\, b_k(t) = -\frac{c_L}{N} \sum_{i\neq k} K' (x_k - x_i),\]
	we observe that 
	\begin{align*}
	\frac{c_L}{N} \sum_{k=0}^{N-1} \int_0^T  |\dot{x}_k(t)|^2 \dt &= \int_0^T \frac{c_L}{N} \left[\sum_{k : \,|a_k(t)| > 2|b_k(t)|}  (a_k(t) + b_k(t))^2 +  \sum_{k: \,|a_k(t)| \leq 2|b_k(t)|} (a_k(t) + b_k(t))^2  \right]  \dt \notag \\
	&\geq \int_0^T \frac{ N}{4 c_L} \sum_{k : \,|a_k(t)| > 2|b_k(t)|}  |\phi(\rho_{k-1}) - \phi(\rho_k)|^2 \dt.
	\end{align*}
	
	From the calculation~\eqref{eq:ender} in  Lemma~\ref{lemma:decrease}, the time derivative of the discrete energy $\Fcal_L(\rho^N_L)(t)$ can be estimated from above by  
	\begin{equation*}
	\frac{d}{dt} \Fcal_L (\rho^N_L)(t) \leq - \frac{c_L}{N}\sum_{k=0}^{N-1} |\dot{x}_k|^2 +\int_0^{c_L} g_{N,L} (t, X(t,z)) \dz, 
	\end{equation*}
	where, thanks to the proof of Lemma~\ref{lemma:decrease}, we have that 
	\begin{equation}\label{eq:4.23}
	\int_0^{c_L} |g_{N,L} (t, X(t,z))| \dz \leq \bar{C}(\| K'\|_{L^\infty}, \| K''\|_{L^\infty}) \frac{L }{\sqrt{N}}.
	\end{equation}

	Thanks to Corollary~\ref{cor:energybound} with $\eps=C_0$ and the above estimate, when $N\geq\bar N$ and $\bar N$ is large enough depending on $L, T$ and $C_0$ we deduce the following bound 
	\begin{align}
	\frac{c_L}{N}\sum_{k=0}^{N-1} \int_s^t |\dot{x}_k|^2 \dt &\leq \Fcal_L(\rho^N_{L}(s)) - \Fcal_L(\rho^N_{L}(t)) + |s-t| \frac{{C}(\| K'\|_{L^\infty}, \| K''\|_{L^\infty})L }{\sqrt{N}}. \notag\\
	&\leq \Fcal_L((\rho_{0})_L) + 2C_0 - {\min \Fcal_L} + |s-t|\frac{{C}(\| K'\|_{L^\infty}, \| K''\|_{L^\infty})L }{\sqrt{N}}\notag\\
	&\leq C_0+2C_0-\min\mathcal F_L+|s-t|\frac{{C}(\| K'\|_{L^\infty}, \| K''\|_{L^\infty})L }{\sqrt{N}}\label{eq:new1}
	\end{align}

	Hence, by \eqref{eq:4.23} and \eqref{eq:new1} we have that
	\begin{equation}\label{eq:ab}
	N \int_0^T \sum_{k :\, |a_k(t)| > 2|b_k(t)|}  |\phi(\rho_{k}) - \phi(\rho_{k-1})|^2 \dt \leq  4 c_L\Big(3C_0 - {\min \Fcal_L} +T\frac{{C}(\| K'\|_{L^\infty}, \| K''\|_{L^\infty})L }{\sqrt{N}} \Big)
	\end{equation}
	On the other hand, $|a_k(t)| \leq 2 |b_k(t)|$ implies 
	\[   N^2| \phi(\rho_{k}) - \phi(\rho_{k-1}) |^2 < c_L^4 \| K' \|^2_{L^\infty}  \]
	thus also
	\begin{equation}\label{eq:ba}
	 N \int_0^T \sum_{k :\, |a_k(t)| \leq 2|b_k(t)|}  | \phi(\rho_{k}) - \phi(\rho_{k-1}) |^2 \dt  \leq T c_L^4 \| K' \|^2_{L^\infty}. 
	\end{equation}
	Now observe that 
	\begin{equation}\label{eq:minfnew}
	\inf_L\min\Fcal_L=\inf_L\,\,\inf_{\|\rho\|_{L^1(\R)}\leq c_L}\mathcal F_L(\rho)\geq-\|K\|_{L^\infty}>-\infty
	\end{equation}
	 where we used the fact that $W$ is nonnegative and  $K$ is uniformly bounded.
	Gathering together~\eqref{eq:ab},~\eqref{eq:ba} and~\eqref{eq:minfnew} one has that if $N\geq\bar N(T, C_0,L)$ is sufficiently large
	\[   
	\sup_N\int_0^T \sum_kN|\phi (\rho_k)-\phi(\rho_{k-1})|^2\dt\leq C(K,C_0)(1+T).
	\]
\end{proof}

As a corollary of Lemma~\ref{lemma:w12estder} and Lemma~\ref{lemma:w12estint} we have the following linear bound on the discrete $W^{1,2}$ norm of $\phi(\rho^N_L)$.

\begin{corollary}
	\label{cor:w12est}
	Let $C_0>0$, $T>0$. Then, there exist constants $C(K, C_0)$ and $C(c_0,C_0,K)$ and there exists $\bar N=\bar N(T, C_0,L)$ such that the following holds. Let $\rho_{0,L}\in L^1(\T_L)$ with $\Fcal_L(\rho_{0,L})\leq C_0$ and assume that  $\{\rho^N_L\}_{N\geq\bar N}$, i.e. the deterministic particle approximation starting from $\rho_{0,L}$, is well defined on $[0,T)$. Then, for all $0\leq t_0<t<T$
	\begin{align}\label{eqalin}
	N\sum_{k=0}^{N-1}|\phi(\rho_{k}(t))-\phi(\rho_{k-1}(t))|^2&\leq N\sum_{k=0}^{N-1}|\phi(\rho_{k}(t_0))-\phi(\rho_{k-1}(t_0))|^2 \notag\\
	&+ C(c_0,C_0,c_2,\bar{\rho}, K)(t-t_0)+C(K, C_0)\frac{(2c_L\|K'\|_{L^\infty})^2}{c_2}(1+(t-t_0)).
	\end{align}
\end{corollary}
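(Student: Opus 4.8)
The plan is to combine the two preceding lemmas in the only way that yields a \emph{linear} (rather than exponential) bound: Lemma~\ref{lemma:w12estder} is a Gronwall-type differential inequality for the discrete $W^{1,2}$ quantity, and Lemma~\ref{lemma:w12estint} gives an a priori $L^1$-in-time control of the same quantity; feeding the latter into the integrated form of the former converts the feedback term into a harmless additive term. Throughout I would write
\[
G(t):=\frac{N}{c_L}\sum_{k=0}^{N-1}|\phi(\rho_{k}(t))-\phi(\rho_{k-1}(t))|^2,
\]
and first record that, by periodicity of $\T_L$, this coincides (up to the factor $c_L\le1$) with the normalizations used in~\eqref{eq:w12normdisc} and in the statement, namely $c_L\,G(t)=N\sum_{k}|\phi(\rho_{k}(t))-\phi(\rho_{k-1}(t))|^2$.

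I would then fix $\bar N=\bar N(T,C_0,L)$ larger than the thresholds furnished by Lemma~\ref{lemma:w12estder} and Lemma~\ref{lemma:w12estint} and take $N\ge\bar N$. Integrating the inequality of Lemma~\ref{lemma:w12estder} on $[t_0,t]\subset[0,T)$ gives
\[
G(t)\le G(t_0)+C(c_0,C_0,c_2,\bar\rho,K)(t-t_0)+\frac{(2c_L\|K'\|_{L^\infty})^2}{c_2}\int_{t_0}^{t}G(s)\ds .
\]
The key step is to bound $\int_{t_0}^{t}G(s)\ds$ by $\tfrac{1}{c_L}C(K,C_0)\bigl(1+(t-t_0)\bigr)$. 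For this I would rerun the proof of Lemma~\ref{lemma:w12estint} on the subinterval $[t_0,t]$ in place of $[0,T]$: the only way the initial time entered there was through the almost-monotonicity of the energy, and Corollary~\ref{cor:energybound} (with $\eps=C_0$) bounds $\Fcal_L(\rho^N_L(s))\le 3C_0$ for \emph{every} $s\in[0,T)$, so estimates~\eqref{eq:new1},~\eqref{eq:ab},~\eqref{eq:ba} together with the uniform lower bound~\eqref{eq:minfnew} on $\min\Fcal_L$ carry over verbatim with $T$ replaced by $t-t_0$. Substituting this bound into the integrated inequality above and multiplying through by $c_L\le1$ yields exactly~\eqref{eqalin}, after absorbing the factor $c_L$ into the first constant.

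The only real subtlety — and the step I would be most careful about — is precisely this last maneuver: a naive application of Gronwall's lemma to Lemma~\ref{lemma:w12estder} would produce a bound growing like $e^{(2c_L\|K'\|_{L^\infty})^2(t-t_0)/c_2}$, which is useless for the subsequent $L\to\infty$ passage since the coefficient $(2c_L\|K'\|_{L^\infty})^2/c_2$ does not vanish. One must instead view the feedback term $\int_{t_0}^t G$ as already controlled via Lemma~\ref{lemma:w12estint}; verifying that Lemma~\ref{lemma:w12estint} genuinely holds on an arbitrary subinterval (not only from the prescribed initial datum) is the one place where a short argument, rather than a pure citation, is required, and everything else is bookkeeping of constants.
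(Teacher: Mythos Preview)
Your proposal is correct and takes essentially the same approach as the paper: the paper states the result as an immediate corollary of Lemma~\ref{lemma:w12estder} and Lemma~\ref{lemma:w12estint} without writing out a proof, and the subsequent Remark explicitly contrasts the linear bound with the exponential one that naive Gronwall would give. Your observation that Lemma~\ref{lemma:w12estint} must be applied on the subinterval $[t_0,t]$ (and your verification that its proof, via Corollary~\ref{cor:energybound} and~\eqref{eq:minfnew}, goes through there unchanged) is exactly the detail the paper leaves implicit.
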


\begin{remark}
	Notice that an exponential bound for the l.h.s. of~\eqref{eqalin} is given directly by Lemma~\ref{lemma:w12estder}. Indeed, the Gronwall-type inequality~\eqref{eq:gronwder} implies the following: for all $0\leq t_0<t<T$ it holds
	 \begin{align}
	\sum_{k=1}^{N-1}\int_{kc_L/N}^{(k+1)c_L/N}&\frac{N^2}{c_L^2}|\phi(\rho_{k+1}(t))-\phi(\rho_k(t))|^2\dz\leq\notag\\ \leq&\Bigl( \sum_{k=1}^{N-1}\int_{kc_L/N}^{(k+1)c_L/N}\frac{N^2}{c_L^2}|\phi(\rho_{k+1}(t_0))-\phi(\rho_k(t_0))|^2\dz+ \frac{C(c_0, C_0,c_2,\bar \rho, K)}{c(c_2,K)}\Bigl)e^{c(c_2,K)(t-t_0)},\label{eq:derestmain}
	\end{align}
	where we set $c(c_2,K)=(2\|K'\|_{L^\infty})^2/(c_2)$.
	
\end{remark}

In the following lemma we relate the $L^\infty$ norm of $\phi(\rho^N_L)$ to its discrete $W^{1,2}$ norm.

\begin{lemma}
	\label{lemma:w12linfty}
	Let $\rho^N_L(t)$ be the deterministic particle approximation at time $t$ defined starting from $\rho_{0,L}$ with $\int_{\T_L}\rho_{0,L}(x)\dx=c_L$. Then one has that 
	\begin{equation*}
	\|\phi(\rho^N_L(t))\|_{L^\infty(\T_L)}\leq\Bigl(\phi\Bigl(\frac{c_L}{L}\Bigr)+1\Bigr)+N\sum_{k=0}^{N-1}|\phi(\rho_{k+1}(t))-\phi(\rho_{k}(t))|^2.
	\end{equation*}
\end{lemma}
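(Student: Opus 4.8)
The statement is a discrete Gagliardo–Nirenberg/Sobolev-type inequality on the one-dimensional torus. I would exploit the fact that $\phi(\rho^N_L)$ is a piecewise constant function whose values $\phi(\rho_k)$ are indexed by $k=0,\dots,N-1$, and that there is a "low point": since $\sum_{k=0}^{N-1}(x_{k+1}-x_k)=L$, at least one index $k^*$ satisfies $x_{k^*+1}-x_{k^*}\geq L/N$, hence $\rho_{k^*}\leq c_L/L$ and, by monotonicity of $\phi$, $\phi(\rho_{k^*})\leq\phi(c_L/L)$. The goal is then to bound $\max_k\phi(\rho_k)$ in terms of $\phi(\rho_{k^*})$ plus the discrete $W^{1,2}$ seminorm.

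\textbf{Key steps.} First I would fix an arbitrary index $m$ and write, telescoping from $k^*$ to $m$ (going around the torus in whichever direction is shorter, so the number of steps is at most $N$),
\[
\phi(\rho_m)-\phi(\rho_{k^*})=\sum_{j}\bigl(\phi(\rho_{j+1})-\phi(\rho_j)\bigr),
\]
the sum running over the relevant consecutive indices. By Cauchy–Schwarz applied to this sum of at most $N$ terms,
\[
\bigl|\phi(\rho_m)-\phi(\rho_{k^*})\bigr|^2\leq N\sum_{k=0}^{N-1}\bigl|\phi(\rho_{k+1})-\phi(\rho_k)\bigr|^2.
\]
Then I would bound $|\phi(\rho_m)|\leq|\phi(\rho_{k^*})|+\bigl(N\sum_k|\phi(\rho_{k+1})-\phi(\rho_k)|^2\bigr)^{1/2}$ and use the elementary inequality $a\leq 1+a^2$ (or $\sqrt S\leq 1+S$) to replace the square root of the seminorm by $1$ plus the seminorm itself, which is exactly the form in the statement. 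Finally, since $m$ was arbitrary and $\phi(\rho^N_L(t))$ takes only the values $\phi(\rho_m)$, taking the supremum over $m$ gives the claimed $L^\infty$ bound with the constant term $\phi(c_L/L)+1$.

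\textbf{Main obstacle.} There is essentially no analytic difficulty here; the only point requiring a little care is the bookkeeping on the torus — ensuring that the telescoping path from $k^*$ to any $m$ uses each edge $\phi(\rho_{j+1})-\phi(\rho_j)$ at most once and has length at most $N$, so that the Cauchy–Schwarz constant is genuinely $N$ and the right-hand side is the \emph{full} cyclic sum $\sum_{k=0}^{N-1}|\phi(\rho_{k+1})-\phi(\rho_k)|^2$ (with indices read modulo $N$, consistent with the periodic convention $x_N=x_0$ used throughout). Once this is set up, the rest is the two-line Cauchy–Schwarz plus the trivial inequality, and one should double-check only that the additive constant is $\phi(c_L/L)+1$ rather than something larger, which follows from monotonicity of $\phi$ and the choice of $k^*$.
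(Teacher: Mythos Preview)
Your proposal is correct and follows essentially the same route as the paper. The paper's proof also isolates a ``low point'' index $k^*$ via $\sum_k(x_{k+1}-x_k)=L$, telescopes, and then converts the $\ell^1$ sum of increments into the discrete $W^{1,2}$ quantity by Cauchy--Schwarz together with the elementary bound $\sqrt S\le\max\{1,S\}\le 1+S$; the only cosmetic difference is that the paper passes through the full total variation $\sum_j|\phi(\rho_{j+1})-\phi(\rho_j)|$ and records the inequality $\sum_k|a_{k+1}-a_k|\le\max\{1,N\sum_k|a_{k+1}-a_k|^2\}$ as a separate fact, whereas you apply Cauchy--Schwarz directly to the telescoping path.
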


\begin{proof}
	The claim of the lemma follows immediately from the following facts: 
	\begin{enumerate}
	\item\[
		\|\phi(\rho^N_L(t))\|_{L^\infty(\T_L)}\leq |\phi(\rho_k)|+\sum_{j=0}^{N-1}|\phi(\rho_{k+j+1})-\phi(\rho_{k+j})|,\qquad\forall\,k\in\{0,\dots, N-1\}
	\]
	\item due to the relation
	\[
	\sum_{k=0}^{N-1}(x_{k+1}-x_k)=L
	\]
	there exists $k=k(N,t)$ such that $\rho_k(t)\leq\frac{c_L}{L}$
	\item it holds 
	\begin{equation}\label{eq:tvtv2ineq}
	\sum_{k=0}^{N-1}|a_{k+1}-a_k|\leq\max\Bigl\{1,N\sum_{k=0}^{N-1}|a_{k+1}-a_k|^2\Bigr\}.
	\end{equation}
		\end{enumerate}
\end{proof}

The following lemma guarantees the local existence of the discrete particle approximations on a time interval depending only on the $L^\infty$ norm of the initial datum.

\begin{lemma}\label{lemma:mininterv}
	Let $\rho_{0,L}\in L^\infty(\T_L)$. Then there exists $T_0=T_0(\|\rho_{0,L}\|_{L^\infty},\|K'\|_{L^\infty})>0$ such that the deterministic particle approximation $\{\rho^N_L\}_{N\in\N}$ defined starting from $\rho_{0,L}$ is well-defined on $[0,T_0)$.
\end{lemma}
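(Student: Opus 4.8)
\textbf{Plan of proof for Lemma~\ref{lemma:mininterv}.}
The plan is to set up a short-time existence argument for the ODE system~\eqref{eq:ode} that exploits the elementary fact that, as long as the particles stay ordered, the right-hand side is locally Lipschitz, and to control \emph{how long} the ordering is preserved by a quantitative lower bound on the minimal gap $\delta(t):=\min_k (x_{k+1}(t)-x_k(t))$ that depends only on $\|\rho_{0,L}\|_{L^\infty}$ (hence on the initial minimal gap) and on $\|K'\|_{L^\infty}$. The key observation is that the diffusive part of~\eqref{eq:ode}, namely $-\frac{N}{c_L}[\phi(\rho_k)-\phi(\rho_{k-1})]$, is the one that could in principle make two particles collide, but it always acts with the \emph{right sign}: if $x_{k+1}-x_k$ is the smallest gap (so $\rho_k$ is the largest density, near the collision) then $\phi(\rho_k)\geq\phi(\rho_{k-1})$ and $\phi(\rho_k)\geq\phi(\rho_{k+1})$ by monotonicity of $\phi$, so the diffusive terms in $\dot x_{k+1}-\dot x_k$ \emph{push the two particles apart}. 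Thus the only mechanism that can shrink the minimal gap is the nonlocal term, whose contribution to $\dot x_{k+1}-\dot x_k$ is bounded in absolute value by $2c_L\|K'\|_{L^\infty}$ uniformly in $k,N$ by~\eqref{eq:kerbound}.

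Concretely, first I would note that on any interval on which $x_0(t)<\dots<x_{N-1}(t)$ holds strictly, the vector field in~\eqref{eq:ode} is $C^1$ in the positions (since $\phi\in C^1$ and $\rho_k=\frac{c_L}{N(x_{k+1}-x_k)}$ is smooth away from collisions, and $K'$ is $C^1$ away from the diagonal while the on-diagonal term $K'(0)$ is excluded), so Picard--Lindelöf gives a unique local solution and a maximal existence time $T_{\max}$, with the standard alternative that either $T_{\max}=+\infty$ or $\delta(t)\to0$ as $t\uparrow T_{\max}$. Next, let $k(t)$ be an index realizing the minimal gap at time $t$; at a point where $\delta$ is differentiable one has $\dot\delta(t)=\dot x_{k(t)+1}(t)-\dot x_{k(t)}(t)$, and by the sign discussion above together with the nonlocal bound,
\[
\dot\delta(t)\ \geq\ -\,2c_L\|K'\|_{L^\infty}\ \geq\ -\,2\|K'\|_{L^\infty},
\]
so that $\delta(t)\geq\delta(0)-2\|K'\|_{L^\infty}\,t$. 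Since the construction of the $x_k$ from $\rho_{0,L}$ gives $x_{k+1}-x_k\geq \frac{c_L}{N\|\rho_{0,L}\|_{L^\infty}}$, wait --- this lower bound degenerates as $N\to\infty$, so I instead use the \emph{piecewise-linear interpolant} viewpoint: set $Y^N(z)=x_k+\frac{N}{c_L}(z-kc_L/N)(x_{k+1}-x_k)$ for $z\in[kc_L/N,(k+1)c_L/N]$, so that $\partial_z Y^N=\frac{N}{c_L}(x_{k+1}-x_k)=1/\rho_k$, and observe that $\|\partial_z Y^N(0,\cdot)\|_{L^\infty}=1/\inf\rho_k(0)$ may be large but what matters for non-collision is $\inf_z\partial_z Y^N>0$, which is exactly $\delta(0)N/c_L>0$. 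The cleaner statement is: define $T_0:=\frac{c_L}{2N\|\rho_{0,L}\|_{L^\infty}\|K'\|_{L^\infty}}$; then $\delta(t)>0$ on $[0,T_0)$. But this still depends on $N$.

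\textbf{The honest fix and the main obstacle.} The dependence on $N$ above is the real difficulty, and resolving it is where the genuine content lies: one should not bound $\dot\delta$ additively but rather \emph{multiplicatively}, estimating $\frac{d}{dt}\log\rho_{k(t)}$ instead of $\dot\delta$. From~\eqref{eq:dtrhok} and the sign of the diffusive terms, on the maximal-density index one gets $\partial_t\rho_k\leq \rho_k^2\frac{N}{c_L}|\dot x_{k+1}-\dot x_k|_{\mathrm{nonlocal}}\leq \rho_k^2\frac{N}{c_L}\cdot 2c_L\|K'\|_{L^\infty}\cdot\frac{c_L}{N}\cdot\frac1{\rho_k}$ --- i.e. one rewrites $\frac{N}{c_L}(x_{k+1}-x_k)=1/\rho_k$ so that $\rho_k^2\cdot\frac{N}{c_L}\cdot(\text{nonlocal increment of }x_{k+1}-x_k)$, and the nonlocal increment of the \emph{gap} is bounded by $2\|K'\|_{L^\infty}c_L/N$ times nothing dangerous, giving $\partial_t\rho_k\leq 2\|K'\|_{L^\infty}\rho_k$. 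Hence $\|\rho^N_L(t)\|_{L^\infty}\leq \|\rho_{0,L}\|_{L^\infty}\,e^{2\|K'\|_{L^\infty}t}$ on the maximal interval, which is finite for finite $t$, so $\delta(t)=\frac{c_L}{N\|\rho^N_L(t)\|_{L^\infty}}$ cannot go to zero in finite time, ruling out the blow-up alternative and yielding well-posedness on $[0,T_0)$ with $T_0$ depending only on $\|\rho_{0,L}\|_{L^\infty}$ and $\|K'\|_{L^\infty}$ (in fact on any finite $T_0$, but a clean explicit one suffices for the statement). The main obstacle is precisely the care needed in the differential-inequality step at indices where the minimum/maximum is attained by several particles (so $\delta$ or $\max_k\rho_k$ is only Lipschitz, not $C^1$): this is handled by working with upper Dini derivatives and using that at such a time the inequality $\dot x_{k+1}-\dot x_k\geq -2c_L\|K'\|_{L^\infty}$ (equivalently $\partial_t\rho_k\leq 2\|K'\|_{L^\infty}\rho_k$) holds simultaneously for every index realizing the extremum, so the Dini derivative of the max still obeys the bound and Gronwall applies.
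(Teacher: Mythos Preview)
Your strategy is the same as the paper's: work at the index of maximal density, discard the diffusive contribution to $\dot x_{k+1}-\dot x_k$ by its sign, and control the nonlocal contribution. The error is in your ``honest fix''. You claim $\partial_t\rho_k\leq 2\|K'\|_{L^\infty}\rho_k$, but the chain of inequalities inserts an unjustified factor of $(x_{k+1}-x_k)=c_L/(N\rho_k)$: the nonlocal part of $\dot x_{k+1}-\dot x_k$ is \emph{not} bounded by a constant times the gap. Writing it out, the two ``self'' terms combine to $-\tfrac{2c_L}{N}K'(x_{k+1}-x_k)$, which is $O(1/N)$ with no extra factor of $(x_{k+1}-x_k)$, while the cross terms $\sum_{j\neq k,k+1}[K'(x_{k+1}-x_j)-K'(x_k-x_j)]$ are what give $O(\|K''\|_{L^\infty}(x_{k+1}-x_k))$. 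Plugging the correct bound into $\partial_t\rho_k=-\rho_k^2\tfrac{N}{c_L}(\dot x_{k+1}-\dot x_k)$ yields, at the maximal index,
\[
\partial_t\rho_k\ \leq\ 2\|K'\|_{L^\infty}\,\rho_k^2\;+\;c_L\|K''\|_{L^\infty}\,\rho_k,
\]
which is quadratic, not linear. Your Gronwall conclusion (``finite for finite $t$, ruling out the blow-up alternative'') is therefore too strong: the Riccati inequality above does blow up in finite time, and only yields a lower bound $T_0\gtrsim 1/(\|K'\|_{L^\infty}\|\rho_{0,L}\|_{L^\infty})$ on that time --- which happens to be exactly what is needed, but is weaker than what you assert.

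The paper avoids the detour through $\rho_k$ and works directly with the gap. Using the same splitting as above, at the maximal-density index (and once $\rho_k$ exceeds the harmless threshold $\|K''\|_{L^\infty}/(c_L\|K'\|_{L^\infty})$, so that the $K''$ term $\tfrac{c_L^2\|K''\|_{L^\infty}}{N\rho_k}$ is dominated by the $K'$ term) one gets $\dot x_{k+1}-\dot x_k\geq -\tfrac{2c_L\|K'\|_{L^\infty}}{N}$. Since the minimal gap starts at $\tfrac{c_L}{N\|\rho_{0,L}\|_{L^\infty}}$ and shrinks at rate at most $\tfrac{2c_L\|K'\|_{L^\infty}}{N}$, the factors of $N$ cancel and one reads off $T_0\geq \tfrac{1}{2\|K'\|_{L^\infty}\|\rho_{0,L}\|_{L^\infty}}$. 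The point you were missing is precisely this $O(1/N)$ bound on the nonlocal gap velocity, which needs $K''\in L^\infty(\R\setminus\{0\})$ for the cross terms; neither the crude $O(1)$ bound of your first attempt nor the spurious $O(x_{k+1}-x_k)$ bound of your second is correct.
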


\begin{proof}
	By definition of the deterministic particle approximations one has that
	\begin{equation*}
	x_{k+1}(0)-x_{k}(0)\geq\frac{c_L}{N\|\rho_{0,L}\|_{L^\infty}},\qquad\forall\,k=0,\dots,N-1.
	\end{equation*}
	Now we want to estimate from below the minimum time $T_0$ which is necessary in order to have $x_{k+1}(T_0)-x_{k}(T_0)=0$ for some $k\in\{0,\dots,N-1\}$.
	Observe that whenever $\rho_k(t)$ is maximum among all $\{\rho_{j}(t)\}_{j=1}^N$, then $\phi(\rho_{k+1})+\phi(\rho_{k-1})-2\phi(\rho_k)\leq0$ and therefore, assuming w.l.o.g. $\rho_{k}(t)>\|K''\|_{L^\infty}/(c_L\|K'\|_{L^\infty})$ one has that
	\begin{equation*}
	\dot{x}_{k+1}(t)-\dot{x}_k(t)\geq-c_L\frac{\|K'\|_{L^\infty}}{N}-\|K''\|_{L^\infty}\frac{c_L}{N\rho_k}\geq-\frac{2c_L\|K'\|_{L^\infty}}{N}.
	\end{equation*} 
	From this estimate, one has that $T_0\geq\frac{1}{2\|K'\|_{L^\infty}\|\rho_{0,L}\|_{L^\infty}}$.

\end{proof}

We conclude this section with the proof of Theorem~\ref{thm:existenceapprox}.

\begin{proof}[Proof of Theorem~\ref{thm:existenceapprox}:]
	By Lemma~\ref{lemma:mininterv} there exists $T_0=T_0(\|\rho_{0,L}\|_{L^\infty},\|K'\|_{L^\infty})$ such that the discrete particle approximations $\{\rho^N_L\}_{N\in\N}$ are well defined on $[0,T_0)$. Without loss of generality we can then assume that $T\geq T_0$. 
	By Lemma~\ref{lemma:w12estint} applied to the interval $[0,T_0)$ one has that there exists a constant $C(K,\Fcal_L(\rho_{0,L}))$ and $\bar N=\bar N(T_0,\Fcal_L(\rho_{0,L}), L)$ such that for all $N\geq\bar N$
	\begin{equation*}
	\sup_{N\geq\bar N}\int_{0}^{T_0}\sum_{k=0}^{N-1}N|\phi(\rho_k(t))-\phi(\rho_{k-1}(t))|^2\dt\leq C(K,\Fcal_L(\rho_{0,L}))(1+T_0).
	\end{equation*} 
	In particular, for all $N\geq \bar N$ there exists $t^N_{0}\in[0,T_0)$ such that
	\begin{equation*}
	\sum_{k=0}^{N-1}N|\phi(\rho_k(t^N_0))-\phi(\rho_{k-1}(t^N_0))|^2\leq \frac{C(K,\Fcal_L(\rho_{0,L}))(1+T_0)}{T_0}.
	\end{equation*}
	Assume now by contradiction that there exists $\bar T\in[T_0,T]$ such that the discrete particle approximations are well defined on $[0,\bar T)$ for sufficiently large $N$ but blow up at $\bar T$ for a sequence of arbitrarily large $N$.

	By Corollary~\ref{cor:w12est}, one has that for all $t\in[t^N_0,\bar T)$ and for all $N\geq\bar N$ with $\bar N$ eventually larger depending on $T$ it holds
	\begin{align*}
		N\sum_{k=0}^{N-1}|\phi(\rho_{k}(t))-\phi(\rho_{k-1}(t))|^2&\leq \frac{C(K,\Fcal_L(\rho_{0,L}))(1+T_0)}{T_0} \notag\\
	&+ C(c_0,C_0,c_2,\bar{\rho}, K)T+C(K, C_0)\frac{(2c_L\|K'\|_{L^\infty})^2}{c_2}(1+T).
	\end{align*}
	By Lemma~\ref{lemma:w12linfty}, for all $t\in[t^N_0,\bar T)$ one gets the $L^\infty$ bound
	\begin{align}
		\|\phi(\rho^N_L(t))\|_{L^\infty(\T_L)}&\leq\Bigl(\phi\Bigl(\frac{c_L}{L}\Bigr)+1\Bigr)+\frac{C(K,\Fcal_L(\rho_{0,L}))(1+T_0)}{T_0} \notag\\
		&+ C(c_0,C_0,c_2,\bar{\rho}, K)T+C(K,  C_0)\frac{(2c_L\|K'\|_{L^\infty})^2}{c_2}(1+T).\label{eq:phibound}
	\end{align}
	Thanks to the assumption~\eqref{eq:phirhocond} the bound~\eqref{eq:phibound} extends (up to a constant) to a similar bound for $\|\rho^N_L\|_{L^\infty(\T_L)}$ on $[0,\bar T)$. In particular, applying again Lemma~\ref{lemma:mininterv} to $\rho^N_L(\bar T-\eps)$ for some $\eps=\eps(\sup_{t<\bar T}\|\rho^N_L(t)\|_{L^\infty})$ sufficiently small, the discrete particle approximations  can be extended for any $N\geq \bar N$ up to the time $\bar T$ and even further, with the same bound. Thus a contradiction is reached and the statement of the theorem is proved. 
\end{proof}

\section{Convergence of the deterministic particle scheme}

The main goal of this section is to prove Theorem~\ref{thm:convlfixed},Theorem~\ref{thm:convinl} and~Theorem\ref{thm:convfinal}.

\subsection{$L^1$-Compactness}

In this paragraph we discuss the strong $L^1$-compactness in space and time of the following functions:
\begin{itemize}
	\item $\{\rho^N_L\}_{N\geq\bar N}$  (in order to prove Theorem~\ref{thm:convlfixed});
	\item $\{\rho_L\}_{L>0}$  (in order to prove Theorem~\ref{thm:convinl});
	\item $\{\rho^\lambda\}_{\lambda\in\N}$ (in order to prove Theorem~\ref{thm:convfinal}).
\end{itemize}

The proof of the various compactness results will not depend on the strict positivity of the initial densities, which will be  instead necessary to prove that the limit densities of the deterministic particle approximations $\rho^N_L$ on $N\to\infty$ are solutions  of the PDE.

In order to show compactness of the approximate solutions we will use the following generalized Aubin-Lions Lemma given in Theorem 2 of~\cite{RossiSav}. Before recalling it, we need to introduce the following definitions. 

Let $X$ be a separable Banach space.  We recall that a functional $\mathcal G: X\to[0,+\infty]$ is a \emph{normal integrand} if it is l.s.c. with respect to the Borel $\sigma$-algebra $\mathcal B(X)$.
$\mathcal G$ is also \emph{coercive} if the sublevels $\{v \in X : \mathcal G (v) \leq c\}$ are compact for any $c\geq0$. 

A pseudo-distance $g : X \times X \to [0, +\infty]$ is compatible with $\mathcal G$  if for every $v,w$ such that $g(v,w)=0$ and $\mathcal G(v)<+\infty$, $\mathcal G(w)<+\infty$ then $v=w$.

We are ready to recall Theorem 2 of~\cite{RossiSav} in a simplified form which is sufficient for our purposes.

\begin{theorem}\cite{RossiSav}\label{thm:aubinlions}
	Let $X$ be a separable Banach space. Let $\mathcal U$ be a set  of measurable functions $v:(0,T)\to X$, let $\mathcal G : X \to [0, +\infty]$ be a normal coercive integrand  and 	$g : X \times X \to [0, +\infty]$ be a l.s.c. pseudo-distance compatible with $\mathcal G$. Assume moreover that
	\begin{equation}\label{eq:tight}
	\sup_{v\in\mathcal U}\int_0^T\mathcal G(v(t))\dt<+\infty
	\end{equation}
	and
	\begin{equation}\label{eq:gholder}
	\lim_{h\to0}\sup_{v\in\mathcal U}\int_0^{T-h}g(v(t+h),v(t))\dt=0.
	\end{equation}
	Then $\mathcal U$ contains a sequence $v_n$ which converges in measure (w.r.t. $t\in(0,T)$ and with values in $X$) to a limit $v:~(0,T)\to~X$.
	
\end{theorem}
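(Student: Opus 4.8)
I would prove this abstract Aubin--Lions--Simon-type statement (it is cited from \cite{RossiSav}, so what follows is only a sketch of how one reaches it) by decoupling a \emph{spatial} compactness, coming from the coercivity of $\mathcal G$ and the time-integral bound \eqref{eq:tight}, from a \emph{temporal} equicontinuity encoded in \eqref{eq:gholder}, and then gluing the two via the compatibility of $g$ with $\mathcal G$. Write $M:=\sup_{v\in\mathcal U}\int_0^T\mathcal G(v(t))\dt<+\infty$ and, for $m\in\N$, set $K_m:=\{u\in X:\mathcal G(u)\le m\}$, which is compact in $X$ because $\mathcal G$ is a normal coercive integrand. By Chebyshev's inequality, $\big|\{t\in(0,T):v(t)\notin K_m\}\big|\le M/m$ for every $v\in\mathcal U$, so the values of the functions in $\mathcal U$ concentrate, uniformly in $v$, on a fixed $\sigma$-compact subset of $X$ up to time-sets of arbitrarily small measure.

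The second ingredient I would establish is that $g$ controls the norm on each of these compacts: for every $m$ and every $\sigma>0$ there is $\eta>0$ such that $u,w\in K_m$ and $g(u,w)<\eta$ imply $\|u-w\|_X<\sigma$. Indeed, were this false there would be sequences $u_n,w_n\in K_m$ with $g(u_n,w_n)\to0$ but $\|u_n-w_n\|_X\ge\sigma$; by compactness of $K_m$ one passes to limits $u_n\to u$, $w_n\to w$, so that $\|u-w\|_X\ge\sigma$ and in particular $u\neq w$, while lower semicontinuity of $g$ gives $g(u,w)=0$ and, since $\mathcal G(u),\mathcal G(w)\le m<+\infty$, the compatibility of $g$ with $\mathcal G$ forces $u=w$, a contradiction.

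For the temporal part I would avoid mollifying the $v$'s directly (they need not even be Bochner integrable) and instead first truncate onto the compacts: fix, for each $m$, a Borel retraction $\Pi_m$ of $X$ onto $K_m$ (e.g.\ $\Pi_m=\mathrm{id}$ on $K_m$ and constant elsewhere) and set $v^{(m)}:=\Pi_m\circ v$, which is $K_m$-valued and coincides with $v$ off a time-set of measure $\le M/m$. Combining \eqref{eq:gholder} with the previous step and Chebyshev yields, for every $\sigma>0$,
\[
\sup_{v\in\mathcal U}\Big|\big\{t:\|v^{(m)}(t+h)-v^{(m)}(t)\|_X\ge\sigma\big\}\Big|\ \le\ \frac1\eta\,\sup_{v\in\mathcal U}\!\int_0^{T-h}\! g(v(t+h),v(t))\dt\ +\ \frac{2M}{m}\ \xrightarrow[\,h\to0,\ m\to\infty\,]{}\ 0,
\]
i.e.\ an integral equicontinuity \emph{in measure} for the $K_m$-valued family $\{v^{(m)}\}_{v\in\mathcal U}$. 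Since $K_m$ is compact, Mazur's theorem makes its closed convex hull compact as well, so for each fixed $h$ the time-averages $t\mapsto\frac1h\int_t^{t+h}v^{(m)}(s)\ds$ form an equi-Lipschitz-in-$t$ family with values in that compact set, hence relatively compact in $C([0,T-h];X)$ by Arzel\`a--Ascoli; together with the displayed equicontinuity in measure and the boundedness of $K_m$, a diagonal argument over $h=1/j$ produces, for each $m$, a subsequence along which $v^{(m)}$ converges in measure. A further diagonalization over $m$, and then $m\to\infty$ (using that $v^{(m)}$ and $v$ differ only on a set of measure $\le M/m$ and absorbing the resulting $O(1/m)$ defects), gives a subsequence of $\mathcal U$ converging in measure on $(0,T)$ to some $v:(0,T)\to X$, which is the assertion; convergence in measure is metrized, e.g.\ by $\int_0^T\min\{1,\|u(t)-w(t)\|_X\}\dt$.

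The main obstacle, and the reason the statement is nontrivial, is exactly that $g$ is only a lower semicontinuous \emph{pseudo}-distance with no a priori relation to the Banach norm and no integrability built in: one cannot mollify in time directly, and the passage from ``$g$ small'' to ``norm small'' is available only on the compact sublevels of $\mathcal G$ and only through the compactness/compatibility argument above. Making the three reductions fit together — truncation onto $K_m$, the $g$-to-norm comparison, and the careful bookkeeping of the exceptional time-sets of measure $O(1/m)$ and $\tfrac1\eta\,\omega(h)$ — is the technical heart of Theorem~2 of \cite{RossiSav}, which we invoke in the sequel.
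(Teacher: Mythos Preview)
The paper does not prove this theorem: it is simply recalled, with attribution to \cite{RossiSav} (Theorem~2 there), as a black-box tool for the compactness arguments in \S5.1. There is therefore no proof in the paper to compare your sketch against.

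That said, your outline captures the essential mechanism of the Rossi--Savar\'e argument: Chebyshev on $\int_0^T\mathcal G(v(t))\dt$ to force values into the compact sublevels $K_m$ up to small time-sets, the compatibility/l.s.c.\ argument upgrading $g$-smallness to norm-smallness on each $K_m$, and a truncation/time-averaging/diagonalization scheme to extract a subsequence converging in measure. A couple of points would need tightening in a full proof: the constant $\eta$ in your displayed estimate depends on $m$ (and on $\sigma$), so the joint limit $h\to0$, $m\to\infty$ has to be taken in the right order (first fix $m$ large, then send $h\to0$); and the measurability of the truncation $\Pi_m\circ v$ and of $t\mapsto g(v(t+h),v(t))$ should be checked. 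These are exactly the bookkeeping issues handled in \cite{RossiSav}, which is why the present paper cites rather than reproves the result.
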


In order to state the various compactness results of this section, let us then fix $C_0,C_1>0$ and let us consider any $\rho_0:\R\to[0,+\infty)$ such that $\|\rho_0\|_{L^\infty}\leq C_1/4$, $\Fcal(\rho_0)\leq C_0/4$ and $\int_{\R}\rho_0=1$.

For any measurable function $g:\R\to[0,+\infty)$ we define 
 \begin{equation}\label{eq:gL}
g_L:[-L/2,L/2]\to[0,+\infty),\qquad g_L(x)=g(x)\chi_{[-L/2,L/2]}(x).
\end{equation}
 With a slight abuse of notation in this subsection we will still denote with $g_L$ the corresponding periodic extension function on $\T_L$. 
 
 Let $\rho^N_L$ be the deterministic particle approximation on $\T_L$ starting from the density $(\rho_{0})_L$, with $(\rho_0)_L$ defined as~\eqref{eq:gL} from $\rho_0$. W.l.o.g., we can assume that $N$ and $L$ are sufficiently large so that
 \begin{align}\label{eq:lambdacond}
 \sup_{N,L}\bigl\|\rho^N_L(0)\bigr\|_{L^\infty}\leq C_1,\qquad \sup_{N,L}\Fcal_L(\rho^N_L(0))\leq C_0.
 \end{align}
 
 Theorem~\ref{thm:existenceapprox} guarantees that for every $T>0$ the functions $\rho^N_L$ are well defined on $[0,T]$ as soon as $N\geq\bar N$ with $\bar N=\bar N(T, C_0,L)$ and that they enjoy the following bound
 
 \begin{equation}\label{eq:gamma1gamma2}
 \sup_{N\geq\bar N}\sup_{t\in[0,T]}\|\rho^N_L(t)\|_{L^\infty(\T_L)}\leq\gamma_1+\gamma_2T,
 \end{equation}
where $\gamma_1=\gamma_1(K,C_0,C_1,c_0,c_2)$ and $\gamma_2=\gamma_2(K,C_0,c_2)$.

Our first aim is to show the following
\begin{theorem}\label{thm:compactness}
	Let $\rho_0\in L^1(\R)\cap L^\infty(\R)$. Then, for all $T>0$ the deterministic particle approximation $\{\rho_L^N\}_{N\in\N}$ defined on $[0,T]\times\T_L$ starting from $(\rho_0)_L$ converges up to subsequences as $N\to+\infty$ to a function $\rho_L$ in $L^1([0,T] \times \T_L)$. 
\end{theorem}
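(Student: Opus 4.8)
The plan is to apply the generalized Aubin--Lions Lemma (Theorem~\ref{thm:aubinlions}) with $X=L^1(\T_L)$, with the family $\mathcal U=\{\rho^N_L\}_{N\geq\bar N}$ viewed as curves $t\mapsto\rho^N_L(t)\in L^1(\T_L)$ on $(0,T)$. First I would choose the coercive normal integrand $\mathcal G$. A natural candidate is
\[
\mathcal G(v)=\|v\|_{L^\infty(\T_L)}+\mathrm{TV}_{\mathrm{pi}}(\phi(v))+\chi_{\{\int v=c_L,\,v\geq0\}}(v),
\]
i.e. the $L^\infty$ norm plus the (pseudo-inverse) total variation of $\phi(v)$, restricted to nonnegative densities of the correct mass; more precisely for the piecewise constant competitors $\rho^N_L(t)$ one controls the discrete $W^{1,2}$ quantity \eqref{eq:w12normdisc} and hence, via \eqref{eq:tvtv2ineq}, the discrete total variation of $\phi(\rho^N_L)$. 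Since $\phi$ is strictly increasing and continuous, a uniform $L^\infty$ bound plus a uniform $BV$ bound on $\phi(\rho^N_L)$ gives, by Helly and the continuity of $\phi^{-1}$ on the relevant compact interval, compactness in $L^1(\T_L)$; so sublevels of $\mathcal G$ are relatively compact in $L^1(\T_L)$, and lower semicontinuity is routine. Condition \eqref{eq:tight}, namely $\sup_N\int_0^T\mathcal G(\rho^N_L(t))\,\dt<\infty$, then follows by combining the uniform-in-$t$ $L^\infty$ bound \eqref{eq:gamma1gamma2} coming from Theorem~\ref{thm:existenceapprox} with the integrated $W^{1,2}$ estimate of Lemma~\ref{lemma:w12estint} (and \eqref{eq:tvtv2ineq} to pass from the square estimate to the $BV$ estimate).

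Next I would choose the compatible pseudo-distance $g$ to be (essentially) the $1$-Wasserstein distance $W_1$ on $\T_L$, or equivalently the $L^1$ distance between pseudo-inverses / cumulative distribution functions, which is lower semicontinuous on $L^1(\T_L)$ and compatible with $\mathcal G$ (if $W_1(v,w)=0$ then $v=w$ as measures, hence as $L^1$ functions). The remaining hypothesis \eqref{eq:gholder} is the time-equicontinuity estimate
\[
\lim_{h\to0}\ \sup_{N\geq\bar N}\int_0^{T-h}W_1\big(\rho^N_L(t+h),\rho^N_L(t)\big)\,\dt=0 .
\]
To prove this I would use $W_1(\rho^N_L(t+h),\rho^N_L(t))\le\|X^N_L(t+h)-X^N_L(t)\|_{L^1(0,c_L)}\le\int_t^{t+h}\|\dot X^N_L(s)\|_{L^1}\,\ds$, where $X^N_L$ is the pseudo-inverse; equivalently, in the particle picture, $W_1(\rho^N_L(t+h),\rho^N_L(t))\le\frac{c_L}{N}\sum_k\int_t^{t+h}|\dot x_k(s)|\,\ds$. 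By Cauchy--Schwarz this is bounded by $\sqrt{h}\,\big(\frac{c_L}{N}\sum_k\int_0^T|\dot x_k(s)|^2\,\ds\big)^{1/2}$, and the time-integral of $\frac{c_L}{N}\sum_k|\dot x_k|^2$ is controlled uniformly in $N$ exactly as in the proof of Lemma~\ref{lemma:w12estint} (it is dominated by the total energy drop plus the $O(L/\sqrt N)$ error, using Corollary~\ref{cor:energybound} and \eqref{eq:minfnew}). Hence the left side is $O(\sqrt h)$ uniformly in $N$, which gives \eqref{eq:gholder}; this Hölder-$\tfrac12$ continuity in $W_1$ along the flow is the analogue of the estimate in~\cite{MS}.

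With \eqref{eq:tight} and \eqref{eq:gholder} verified, Theorem~\ref{thm:aubinlions} yields a subsequence of $\{\rho^N_L\}$ converging in measure in $t\in(0,T)$ with values in $L^1(\T_L)$ to some $\rho_L:(0,T)\to L^1(\T_L)$. To upgrade convergence in measure to strong $L^1([0,T]\times\T_L)$ convergence I would invoke a Vitali/dominated convergence argument: the functions $\rho^N_L(t)$ are, along the converging subsequence, uniformly bounded in $L^\infty([0,T]\times\T_L)$ by \eqref{eq:gamma1gamma2} and supported in $\T_L$ of finite measure, so $t\mapsto\|\rho^N_L(t)\|_{L^1(\T_L)}=c_L$ is uniformly integrable in $t$; combined with convergence in measure this gives $\rho^N_L\to\rho_L$ in $L^1((0,T);L^1(\T_L))=L^1([0,T]\times\T_L)$ along the subsequence, completing the proof. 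The main obstacle is the verification of \eqref{eq:gholder}: one must recognize that the right object controlling the time-modulus is the $L^2$-in-time bound on the particle velocities, and that this bound is furnished by the energy dissipation inequality \eqref{eq:ender} rather than by any $BV$-in-space estimate — this is precisely the point where the argument is made $L$-robust (the $W_1$ modulus and the energy-dissipation bound are uniform in the torus size), so care is needed to track that no constant secretly depends badly on $L$.
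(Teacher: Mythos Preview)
Your proposal is correct and follows essentially the same strategy as the paper: apply the Rossi--Savar\'e Aubin--Lions lemma with spatial compactness coming from the integrated $BV$ bound on $\phi(\rho^N_L)$ (Lemma~\ref{lemma:w12estint} plus \eqref{eq:tvtv2ineq}) together with the uniform $L^\infty$ bound of Theorem~\ref{thm:existenceapprox}, and time equicontinuity in $W_1$ coming from the energy--dissipation inequality \eqref{eq:ender} exactly as in the paper's Lemma~\ref{lemma:g}. The only noteworthy difference is organizational: the paper takes $\mathcal U=\{\phi(\rho^N_L)\}$, obtains strong $L^1$ convergence of $\phi(\rho^N_L)$ first, and then uses the strict monotonicity of $\phi$ together with a short Young--measure argument to deduce strong convergence of $\rho^N_L$ itself; you instead build the composition into the integrand by setting $\mathcal G(v)\sim\|v\|_{L^\infty}+TV(\phi(v))$ and apply the lemma directly to $\mathcal U=\{\rho^N_L\}$, which spares you the Young--measure step at the end. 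Both routes are valid, and the time--continuity ingredient (the $\sqrt{h}$ bound on $W_1$ via $\frac{c_L}{N}\sum_k\int|\dot x_k|^2$) is identical.
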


In order to do so,  let us   define, for a function $v\chi_{[-L/2,L/2]} \in L^1(\R)$, the quantity
\begin{align*}
TV(v) &: = \sup_{I\in\N\,\,}\sup_{-L/2=x_0<x_1<\dots<x_I=L/2}\sum_{i=0}^{I-1} |v(x_{i+1})-v(x_i)|.
\end{align*}

Let us observe, that $TV(v)$ corresponds to the standard total variation of the function $v$.

Let us then define  $\mathcal G$ as follows
\begin{equation}\label{eq:Gdef}
\mathcal G(v)= TV(v)+\|v\|_{L^1(\R)}+\mathbb 1_{\{\|v\|_{L^\infty}\leq \phi(\gamma_1+\gamma_2T)\}}+\mathbb 1_{\Big\{\mathtt d_{W_1}\Big(\frac{\phi^{-1}(v)}{\|\phi^{-1}(v)\|_{L^1}}, \rho_0\Big)\leq \Lambda\Big\}},
\end{equation}
where 
\begin{equation*}
\mathbb 1_A(x)=\left\{\begin{aligned}
&0 && &\text{if $x\in A$}\\
&+\infty && &\text{if $x\in A^c$},
\end{aligned}\right.
\end{equation*}
 $\mathtt d_{W_1}$ is the standard 1-Wasserstein distance between probability measures and $\Lambda$ is a suitable positive constant to be defined later (see Remark~\ref{rem:lambda}).

Moreover, define
\[
g(v,w)=\mathtt d_{W_1} \left(\frac{\phi^{-1}(v)}{\|\phi^{-1}(v)\|_{L^1}},\frac{\phi^{-1}(w)}{\|\phi^{-1}(w)\|_{L^1}}\right).
\] 
 
It is fairly easy to see that $\mathcal G$ is a normal coercive integrand. Indeed, the compactness of the sublevels in $L^1_{\loc}$ comes from the first three terms of $\mathcal G$ and gets upgraded to compactness in $L^1$ due the tightness given by the last term in the definition of $\mathcal G$ and to the condition~\eqref{eq:phirhocond}. Moreover, $g$ is a l.s.c. pseudo-distance compatible with $\mathcal G$.

Let us now show that Theorem~\ref{thm:aubinlions} can be applied to the set of functions $\mathcal U=\{\phi(\rho^N_L)\}_{N\in\N}$ and the functionals $\mathcal G$ and $g$ defined above.

Concerning property~\eqref{eq:gholder}, we have the following

\begin{lemma}\label{lemma:g}
	Let $\rho_L^N : [0,T] \times \T_L\to(0,\infty)$, $N\geq\bar N(T,L,C_0)$ be the deterministic particle approximations starting from $(\rho_{0})_L$ and $t,s\in[0,T]$. Then there exists a constant $C(C_0,K)$ independent of $N,L$ such that 
	\begin{equation}\label{eq:timecont}
	\mathtt d_{W_1} \left(\frac{\rho_L^N(s)}{\|\rho_L^N(s)\|_{L^1}}, \frac{\rho_L^N(t)}{\|\rho_L^N(t)\|_{L^1}} \right)\leq C(C_0,K) |t-s|^{1/2}.
	\end{equation} 
\end{lemma}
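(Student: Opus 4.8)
The plan is to control the $1$-Wasserstein distance between the (normalized) particle approximations at times $s<t$ by the displacement of the particles $x_k(\cdot)$, and then to bound that displacement using the dissipation estimate coming from the energy. Since $\rho^N_L(t)$ has total mass $c_L$ for all $t$ and its pseudo-inverse is the piecewise-linear interpolation of the points $(kc_L/N, x_k(t))$, the normalized density $\rho^N_L(t)/c_L$ is a probability measure whose pseudo-inverse on $[0,1]$ is $z\mapsto X(t, c_L z)$. The standard identity $\mathtt d_{W_1}(\mu,\nu)=\int_0^1 |X_\mu(u)-X_\nu(u)|\du$ for probability measures on $\R$ (here on $\T_L$ one uses the natural lift, which is fine since the particle trajectories are continuous and we only compare nearby times) then gives
\begin{equation*}
\mathtt d_{W_1}\Bigl(\tfrac{\rho^N_L(s)}{c_L},\tfrac{\rho^N_L(t)}{c_L}\Bigr)\leq \frac1{c_L}\int_0^{c_L}|X(t,z)-X(s,z)|\dz=\frac1N\sum_{k=0}^{N-1}\Bigl|\int_s^t\dot x_k(r)\,\mathrm dr\Bigr|\cdot\frac{1}{\,\,}\,,
\end{equation*}
more precisely $\frac1{c_L}\int_0^{c_L}|X(t,z)-X(s,z)|\dz \le \frac{c_L}{N}\sum_{k}\frac1{c_L}|x_k(t)-x_k(s)|$, and by Cauchy--Schwarz in $k$ and then in the time integral,
\begin{equation*}
\frac1N\sum_{k=0}^{N-1}|x_k(t)-x_k(s)|\leq \Bigl(\frac1N\sum_{k=0}^{N-1}|x_k(t)-x_k(s)|^2\Bigr)^{1/2}\leq \Bigl(|t-s|\int_s^t\frac1N\sum_{k=0}^{N-1}|\dot x_k(r)|^2\,\mathrm dr\Bigr)^{1/2}.
\end{equation*}

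The heart of the proof is then to bound $\int_s^t\frac{c_L}{N}\sum_k|\dot x_k(r)|^2\,\mathrm dr$ by a constant depending only on $C_0$ and $K$, uniformly in $N$ and $L$. This is exactly the estimate already extracted in the proof of Lemma~\ref{lemma:w12estint}: from the explicit formula~\eqref{eq:ender} for $\frac{d}{dt}\Fcal_L(\rho^N_L(t))$ one reads off
\begin{equation*}
\frac{d}{dt}\Fcal_L(\rho^N_L(t))\leq -\frac{c_L}{N}\sum_{k=0}^{N-1}|\dot x_k(t)|^2+\int_0^{c_L}g_{N,L}(t,X(t,z))\dz,\qquad \int_0^{c_L}|g_{N,L}|\dz\leq \bar C(\|K'\|_{L^\infty},\|K''\|_{L^\infty})\frac{L}{\sqrt N},
\end{equation*}
so integrating on $[s,t]$ and using Corollary~\ref{cor:energybound} (with $\eps=C_0$, valid for $N\geq\bar N(T,C_0,L)$) together with the lower bound $\inf_L\min\Fcal_L\geq-\|K\|_{L^\infty}$ from~\eqref{eq:minfnew}, one gets $\frac{c_L}{N}\sum_k\int_s^t|\dot x_k|^2\,\mathrm dr\leq 3C_0+\|K\|_{L^\infty}+|t-s|\,C(\|K'\|_{L^\infty},\|K''\|_{L^\infty})L/\sqrt N\leq C(C_0,K)$ once $N$ is large enough depending on $L,T$. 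Since $\|\rho^N_L(t)\|_{L^1}=c_L$ is constant in time, the normalization in the statement is just division by $c_L$, so combining the two displays above yields
\begin{equation*}
\mathtt d_{W_1}\Bigl(\tfrac{\rho^N_L(s)}{\|\rho^N_L(s)\|_{L^1}},\tfrac{\rho^N_L(t)}{\|\rho^N_L(t)\|_{L^1}}\Bigr)\leq \frac1{c_L}\Bigl(\frac{N}{c_L}\cdot\frac{c_L}{N}\sum_k\int_s^t|\dot x_k|^2\,\mathrm dr\cdot|t-s|\cdot\frac{c_L}{N}\Bigr)^{1/2}\cdot\frac1{\,\,}
\end{equation*}
— i.e., after collecting the $c_L$-powers, a bound of the form $C(C_0,K)\,|t-s|^{1/2}$, which is~\eqref{eq:timecont}.

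The main obstacle, and the point that needs care rather than just bookkeeping, is the passage from the Wasserstein distance to the pointwise displacement of the particles on the \emph{torus} $\T_L$: the identity $\mathtt d_{W_1}=\int|X_\mu-X_\nu|$ and the monotone-rearrangement optimality of the pseudo-inverse coupling are statements about measures on $\R$, and on $\T_L$ one must argue that, since $s$ and $t$ can be assumed close (it suffices to prove the estimate for $|t-s|$ small, the general case following by the triangle inequality and subdivision of $[s,t]$) and the trajectories $x_k(\cdot)$ are continuous with a uniform-in-$N$ speed bound on $[0,T]$ coming from the $L^\infty$ bound of Theorem~\ref{thm:existenceapprox} plus~\eqref{eq:kerbound}, the coupling that moves each $x_k(s)$ to $x_k(s)+\int_s^t\dot x_k$ is an admissible (and in fact the order-preserving) transport plan on $\T_L$ whose cost is $\frac1N\sum_k|x_k(t)-x_k(s)|$; this gives the required upper bound on $\mathtt d_{W_1}$ without needing it to be optimal. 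Everything else — Cauchy--Schwarz, the dissipation inequality, Corollary~\ref{cor:energybound} — is already in hand from the previous sections, so the lemma follows.
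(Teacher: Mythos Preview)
Your proposal is correct and follows essentially the same route as the paper: bound $\mathtt d_{W_1}$ by the $L^1$-norm of the difference of pseudo-inverses, reduce this to $\frac1N\sum_k|x_k(t)-x_k(s)|$, apply Cauchy--Schwarz in $k$ and then in time, and control $\frac{c_L}{N}\sum_k\int_s^t|\dot x_k|^2$ via the dissipation inequality extracted from~\eqref{eq:ender} together with Corollary~\ref{cor:energybound} and the uniform lower bound $\inf_L\min\Fcal_L\geq-\|K\|_{L^\infty}$. The only cosmetic difference is that the paper passes through $\|X(s)-X(t)\|_{L^2([0,c_L])}^2$ before invoking Cauchy--Schwarz in time, whereas you apply Cauchy--Schwarz directly on the discrete sum; the two are equivalent. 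Your remark on the torus (using the order-preserving particle coupling as an admissible plan, which suffices for an upper bound) makes explicit a point the paper handles implicitly by fixing $x_0(t)$.
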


\begin{proof} 
		Denote by $X(\tau)=X(\tau,\cdot)$ the pseudo-inverse of $\rho^N_L(\tau)$ and let $s<t$.  Let $c_L=\int_{\T_L}(\rho_{0})_L=\int_{\T_L}\rho^N_L(\tau)$.
	In order to estimate the $1$-Wasserstein distance of the deterministic particle approximations at different times we use the well-known identity
	\[
		\mathtt d_{W_1} \left({\rho_L^N(s)}, {\rho_L^N(t)} \right)=\|X(s)-X(t)\|_{L^1([0,c_L])}
	\]
 One has that
		\begin{align*}
	\|X(s)&-X(t)\|_{L^1([0,c_L])}^2	\leq c_L \|X(s)-X(t)\|^2_{L^2([0,c_L])}\notag\\
	&\leq c_L\sum_{k=0}^{N-1}\int_{k c_L/N}^{(k+1) c_L/N}|x_k(t)-x_k(s)+(2Nz-k)[x_{k+1}(t)-x_{k}(t)-x_{k+1}(s)+x_k(s)]|^2\dz\notag\\
	&\leq\sum_{k=0}^{N-1}\frac{c_L}{N}|x_k(t)-x_k(s)|^2+\frac{c_L^2}{N^2}(|x_{k+1}(t)-x_k(t)|^2+|x_{k+1}(s)-x_k(s)|^2).
	\end{align*}
	Moreover,
	\begin{align*}
	|x_{k}(t)-x_k(s)|^2=\Big|\int_s^t\dot x_k(\tau)\d\tau\Big|^2\leq|t-s|\int_s^t|\dot x_k(\tau)|^2\d\tau,
	\end{align*}
	hence 
	\begin{align}\label{eq:ftauest}
		\|X(s)-X(t)\|_{L^1}^2&\leq 2|t-s|\int_s^t\frac{c_L}{N}\sum_{k=0}^{N-1}|\dot x_k(\tau)|^2\d\tau
	\end{align}
	provided $N\gg L$. 
	
From the calculation~\eqref{eq:ender} in  Lemma~\ref{lemma:decrease}, the time derivative of the discrete energy $\Fcal_L(\rho^N_L)(t)$ can be estimated from above by  
\begin{equation*}
\frac{d}{dt} \Fcal_L (\rho^N_L)(t) \leq - \frac{c_L}{N}\sum_{k=0}^{N-1} |\dot{x}_k|^2 +\int_0^{c_L} g_{N,L} (t, X(t,z)) \dz, 
\end{equation*}
where, thanks to the proof of Lemma~\ref{lemma:decrease}, we have that 
\begin{equation*}
\int_0^{c_L} |g_{N,L} (t, X(t,z))| \dz \leq \bar{C}(\| K'\|_{L^\infty}, \| K''\|_{L^\infty}) \frac{L }{\sqrt{N}}.
\end{equation*}

Thanks to Corollary~\ref{cor:energybound} with $\eps=C_0$ and the above estimate, when $N$ is large enough depending on $L$ we deduce the following bound 
\begin{align*}
\frac{c_L}{N}\sum_{k=0}^{N-1} \int_s^t |\dot{x}_k|^2 \dt &\leq \Fcal_L(\rho^N_{L}(s)) - \Fcal_L(\rho^N_{L}(t)) + |s-t|\frac{{C}(\| K'\|_{L^\infty}, \| K''\|_{L^\infty})L }{\sqrt{N}} \notag\\
&\leq \Fcal_L((\rho_{0})_L) + 2C_0 - {\min \Fcal_L} + |s-t|\frac{{C}(\| K'\|_{L^\infty}, \| K''\|_{L^\infty})L }{\sqrt{N}}\notag\\
&\leq C_0+2C_0-\min\mathcal F_L+|s-t|\frac{{C}(\| K'\|_{L^\infty}, \| K''\|_{L^\infty})L }{\sqrt{N}}
\end{align*}	

which, applied to~\eqref{eq:ftauest}, implies 
\[
\|X(s)-X(t)\|_{L^1}^2\leq2c_L|t-s|\Big(3C_0 -{\min \Fcal_L} +T \frac{{C}(\| K'\|_{L^\infty}, \| K''\|_{L^\infty})L }{\sqrt{N}}\Big).
\]
and~\eqref{eq:timecont} follows from the fact that 
\begin{equation*}
\inf_L\min\Fcal_L=\inf_L\,\,\inf_{\|\rho\|_{L^1(\R)}\leq c_L}\mathcal F_L(\rho)\geq-\|K\|_{L^\infty}>-\infty
\end{equation*}
{where the last lower bound holds true since $W$ is non negative and  $K$ is uniformly bounded.}
\end{proof}

\begin{remark}
	\label{rem:lambda} Let us assume that $\rho_0$ has finite first moments. 
	Since the functions $\rho^N_L(0)$ converge in $L^1(\T_L)$ to $(\rho_{0})_L$, which in turn as $L\to+\infty$ converge to $\rho_0$ in $L^1(\R)$,  by Lemma~\ref{lemma:g} 
	\[
	\mathtt d_{W_1}\Big(\frac{\rho^N_L(t)}{\|\rho^N_L(t)\|_{L^1}}, \rho_0\Big)\leq C(C_0,K)T^{1/2}+\bar C,\quad\forall\,t\in[0,T]
	\] 
with $\bar C$ independent of $N,L$ as soon as $N\geq \bar N(L)$ is sufficiently large. Therefore the constant $\Lambda$ in definition~\eqref{eq:Gdef} of $\mathcal G$  can be chosen such that 
	\begin{equation*}
	\Lambda\geq 2(C(C_0,K)T^{1/2}+\bar C),
	\end{equation*}
	so that all the deterministic particle approximations (and their limits as $N\to\infty$ on $[0,T]$, by lower semicontinuity of $\mathtt d_{W_1}$) satisfy the condition 
	\[
		\mathtt d_{W_1}\Big(\frac{\rho^N_L(t)}{\|\rho^N_L(t)\|_{L^1}}, \rho_0\Big)\leq \Lambda.
	\]
	The above condition  is necessary for $\mathcal G$ to be finite along the deterministic particle evolution. Moreover, it gives tightness of the deterministic particle approximations and of their limits. Thanks to~\eqref{eq:phirhocond}, this converts into tightness for the functions $\phi(\rho^N_L)$ and their limits.
	
\end{remark}

\begin{remark}\label{rem:linfty} Observe that, by the assumptions~\eqref{eq:lambdacond} and~\eqref{eq:gamma1gamma2} (see Theorem~\ref{thm:existenceapprox}), the functions $\phi(\rho^N_L)$ satisfy the upper bound 
	\begin{equation}\label{eq:linftyboundphi}
	\sup_{N\geq\bar N}\sup_{t\in[0,T]}\|\phi(\rho^N_L(t))\|_{L^\infty}\leq \phi(\gamma_1+\gamma_2T)
	\end{equation}
	and therefore, also in view of Remark~\ref{rem:lambda}, they  lie in the domain of the functional $\mathcal G$.
\end{remark}

Let us now prove~\eqref{eq:tight} for the functional $\mathcal G$ defined in~\eqref{eq:Gdef} on the functions $\phi(\rho^N_L)$. First of all, one has that by~\eqref{eq:phi5}, the boundedness of $K$ and Corollary~\ref{cor:energybound}

\begin{align}
\sup_{N\geq\bar N}\int_0^T\int_{\T_L}\phi(\rho^N_L(t,x))\dx\dt&\leq\sup_{N\geq\bar N}\int_0^T\int_{\T_L}c_0W(\rho^N_L(t,x))\dx\dt+c_LT\notag\\
&\leq c_0C_0T+c_0\|K\|_{L^\infty}T+c_LT\notag\\
&\leq C(c_0, C_0, K)T<+\infty.\label{eq:phil1bound}
\end{align}

Moreover, one has the following

\begin{equation}\label{eq:bvest}
\sup_{N\geq\bar N}\int_0^T TV(\phi(\rho^N_L)(t))\dt \leq 2C(K, C_0)(1+T).
\end{equation}

Indeed, by Lemma~\ref{lemma:w12estint} one has that

\begin{equation*}
\sup_{N\geq\bar N}\int_0^T \sum_{k=0}^{N-1}N|\phi(\rho_{k+1}(t))-\phi(\rho_k(t))|^2\dt \leq C(K, C_0)(1+T).
\end{equation*}

Thus, applying~\eqref{eq:tvtv2ineq} we deduce~\eqref{eq:bvest}.

From Lemma~\ref{lemma:g}, Remarks~\ref{rem:lambda} and~\ref{rem:linfty} and the bounds~\eqref{eq:phil1bound} and~\eqref{eq:bvest} we deduce that for every $L$ the set
\[
\mathcal U=\{\phi(\rho^N_L)\}_{N\geq\bar N(T,L, C_0)}
\]
satisfies the assumptions of Theorem~\eqref{thm:aubinlions} on $X=L^1(\T_L)$. Hence Theorem~\ref{thm:aubinlions} can be applied, implying the convergence in measure (w.r.t. $t$ with values in $L^1(\T_L)$ and up to subsequences) of the functions $\phi(\rho^N_L):(0,T)\times \T_L\to\R$ to a function $\bar{\phi}_L$. 

By the $L^\infty$ bound~\eqref{eq:linftyboundphi}, the above convergence can be upgraded to convergence in $L^1([0,T]\times\T_L)$. 

By Theorem~\ref{thm:existenceapprox}, also the sequence  $\{\rho^N_L\}_{N\geq \bar N}$ is uniformly bounded on $[0,T]$. In particular, the functions  $\rho^N_L$ converge weakly* in $L^\infty([0,T]\times\T_L)$ (up to subsequences) to some bounded function $\rho_L$. 

By the strict monotonicity and continuity of $\phi$ and $\phi^{-1}$ it is not difficult to deduce (for example looking at the Young measures generated by subsequences of $\rho^N_L$ and $\phi(\rho^N_L)$) that $\bar{\phi}_L=\phi(\rho_L)$ and that $\rho^N_L$ converges strongly to  $\rho_L$ in $L^1([0,T]\times\T_L)$.

This concludes the proof of Theorem~\ref{thm:compactness}.

\vskip 0.3 cm

In order to conclude this section we show that also the following compactness result holds.

\begin{theorem}
	\label{thm:compactness2} Let $\rho_0\in L^1(\R)\cap L^\infty(\R)$ with finite first moments and let  $\rho_L:[0,T]\times[-L/2,L/2]\to[0,+\infty)$ be the functions obtained in Theorem~\ref{thm:compactness}. Then, as $L\to+\infty$, $\rho_L$ converge (up to subsequences) strongly in $L^1([0,T]\times\R)$ to a bounded function $\rho:[0,T]\times\R\to[0,+\infty)$. 
	
	Moreover, let 
	$\{\rho_0^\lambda\}_{\lambda\in\N}$ with $\rho_{0}^\lambda:\R\to(0,+\infty)$ such that $\|\rho_0^\lambda\|_{L^\infty}\leq 2\|\rho_0\|_{L^\infty}$, $\Fcal(\rho_0^\lambda)\leq 2\Fcal(\rho_0)$, $\int_{\R}\rho_0^\lambda=1$, the first moments are uniformly bounded  and 
	\begin{equation*}
	\rho^\lambda_0\longrightarrow\rho_0\quad\text{in $L^1$ as $\lambda\to\infty$}.
	 \end{equation*} Then, as $\lambda\to+\infty$, the densities  $\rho^\lambda$ found as the $\rho$ above but starting from $\rho_0^\lambda$ instead of $\rho_0$  converge (up to subsequences) strongly in $L^1([0,T]\times\R)$ to a bounded  density $\bar \rho$.
\end{theorem}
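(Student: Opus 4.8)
The plan is to run the Aubin--Lions scheme of Theorem~\ref{thm:aubinlions} two more times, keeping the functional $\mathcal G$ of~\eqref{eq:Gdef} and the pseudo-distance $g$ essentially unchanged, but now taking as ambient space $X=L^1(\R)$ and as family either $\mathcal U=\{\phi(\rho_L)\}_{L>0}$ (first part) or $\mathcal U=\{\phi(\rho^\lambda)\}_{\lambda\in\N}$ (second part), where $TV$ in~\eqref{eq:Gdef} is from now on read as the total variation over all of $\R$. The key point, already advertised in the introduction, is that each ingredient used in the proof of Theorem~\ref{thm:compactness} --- the $L^\infty$ bound~\eqref{eq:gamma1gamma2}, the $L^1$ bound~\eqref{eq:phil1bound}, the spatial $BV$ bound~\eqref{eq:bvest}, the $1$-Wasserstein H\"older continuity~\eqref{eq:timecont} of Lemma~\ref{lemma:g} and the tightness of Remark~\ref{rem:lambda} --- comes with constants depending on the initial datum only through its $L^\infty$ norm, its energy and its first moment, hence uniformly in $L$ and in $\lambda$, and never through a lower bound on the density.

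\emph{First part ($L\to+\infty$).} Recall that $\rho_L$ is the weak-$*$ $L^\infty$ limit of $\rho^N_L$ as $N\to\infty$, so $\|\rho_L(t)\|_{L^\infty}\leq\gamma_1+\gamma_2T$ for all $t$ and all $L$ and any limit $\rho$ of the $\rho_L$ is bounded. Along the subsequence of Theorem~\ref{thm:compactness} one has $\phi(\rho^N_L)(t)\to\phi(\rho_L)(t)$ in $L^1$ for a.e.\ $t$; hence, by lower semicontinuity of $TV$ under $L^1$ convergence, of the $L^1$ norm, and of $\mathtt d_{W_1}$ along weakly converging measures, together with Fatou's lemma in time, the bounds~\eqref{eq:phil1bound}, \eqref{eq:bvest} and the modulus of continuity~\eqref{eq:timecont} pass to $\phi(\rho_L)$ with the same constants, uniformly in $L$ (extending $\rho_L$ by $0$ outside $[-L/2,L/2]$ adds at most a jump of size $2\phi(\gamma_1+\gamma_2T)$ at the endpoints, i.e.\ a term $\leq 2T\phi(\gamma_1+\gamma_2T)$ to $\int_0^T TV$, which is harmless). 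Since $(\rho_0)_L\to\rho_0$ in $L^1(\R)$ and $\rho_0$ has finite first moment, $\Lambda$ in~\eqref{eq:Gdef} can be fixed as in Remark~\ref{rem:lambda} so that $\mathtt d_{W_1}(\rho_L(t)/\|\rho_L(t)\|_{L^1},\rho_0)\leq\Lambda$ for all $L$ and $t\in[0,T]$, which via~\eqref{eq:phirhocond} gives the tightness of $\{\phi(\rho_L)\}_L$ and of its limits. Thus $\mathcal U=\{\phi(\rho_L)\}_{L>0}$ satisfies~\eqref{eq:tight} and~\eqref{eq:gholder} on $X=L^1(\R)$ (the facts that $\mathcal G$ is a normal coercive integrand and that $g$ is a l.s.c.\ pseudo-distance compatible with $\mathcal G$ being unaffected), and Theorem~\ref{thm:aubinlions} produces a subsequence along which $\phi(\rho_L)\to\bar\phi$ in measure with values in $L^1(\R)$; the uniform $L^\infty$ bound upgrades this to convergence in $L^1([0,T]\times\R)$. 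Extracting a further subsequence, $\rho_L\to\rho$ weakly-$*$ in $L^\infty([0,T]\times\R)$, and the Young-measure argument of the proof of Theorem~\ref{thm:compactness} (using the strict monotonicity and continuity of $\phi,\phi^{-1}$) identifies $\bar\phi=\phi(\rho)$ and upgrades $\rho_L\to\rho$ to strong convergence in $L^1([0,T]\times\R)$, with $\rho$ bounded.

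\emph{Second part ($\lambda\to+\infty$).} This is the same argument with $\lambda$ in the role of $L$. For each $\lambda$ the density $\rho^\lambda$ is the one produced by the first part starting from $\rho_0^\lambda$, which belongs to $L^1(\R)\cap L^\infty(\R)$, is strictly positive and has finite first moment. Choosing $C_0=8\Fcal(\rho_0)$ and $C_1=8\|\rho_0\|_{L^\infty}$, the assumptions $\Fcal(\rho_0^\lambda)\leq 2\Fcal(\rho_0)\leq C_0/4$ and $\|\rho_0^\lambda\|_{L^\infty}\leq 2\|\rho_0\|_{L^\infty}\leq C_1/4$ hold for every $\lambda$, so the constants $\gamma_1,\gamma_2$ in~\eqref{eq:gamma1gamma2} and the threshold $\phi(\gamma_1+\gamma_2T)$ in~\eqref{eq:Gdef} are $\lambda$-independent, and so is the bound $\sup_\lambda\sup_{t\in[0,T]}\|\rho^\lambda(t)\|_{L^\infty}\leq\gamma_1+\gamma_2T$. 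Since $\rho_0^\lambda\to\rho_0$ in $L^1(\R)$ with uniformly bounded first moments one has $\mathtt d_{W_1}(\rho_0^\lambda,\rho_0)\to 0$, and combining this with~\eqref{eq:timecont} (passed to the limit in $N$ and $L$ as above) gives $\mathtt d_{W_1}(\rho^\lambda(t)/\|\rho^\lambda(t)\|_{L^1},\rho_0)\leq\Lambda$ for all $\lambda$ and $t\in[0,T]$ with $\Lambda$ fixed; hence $\{\phi(\rho^\lambda)\}_\lambda$ lies in the domain of $\mathcal G$, is tight, and satisfies~\eqref{eq:tight} and~\eqref{eq:gholder} on $X=L^1(\R)$. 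A last application of Theorem~\ref{thm:aubinlions} together with the same upgrade-and-identification step yields a subsequence with $\rho^\lambda\to\bar\rho$ strongly in $L^1([0,T]\times\R)$, the limit $\bar\rho$ being bounded by weak-$*$ lower semicontinuity of the $L^\infty$ norm.

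\emph{Main obstacle.} The only delicate point is the bookkeeping needed to ensure that the a priori estimates of the previous sections survive the two successive limit passages ($N\to\infty$, then $L\to\infty$ or $\lambda\to\infty$) with unchanged constants: one invokes lower semicontinuity of $TV$ and of $\mathtt d_{W_1}$ so that the discrete $W^{1,2}$/$BV$ bound of Lemma~\ref{lemma:w12estint} and the Wasserstein modulus of continuity of Lemma~\ref{lemma:g} hold for the limit densities $\rho_L$, $\rho^\lambda$, and one tracks the constants in Theorem~\ref{thm:existenceapprox}, Lemma~\ref{lemma:w12estint} and Lemma~\ref{lemma:g} to confirm that they depend on the data only through $L^1$, $L^\infty$, energy and first-moment bounds, never through $L$ or through $\inf\rho_0$. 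Granting this, the Aubin--Lions machinery applies \emph{mutatis mutandis} and the strict monotonicity of $\phi$ converts the compactness of $\phi(\rho_L)$ (resp.\ $\phi(\rho^\lambda)$) into strong $L^1$ compactness of the densities themselves.
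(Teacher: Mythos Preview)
Your proof is correct and follows essentially the same approach as the paper: apply Theorem~\ref{thm:aubinlions} on $X=L^1(\R)$ to $\{\phi(\rho_L)\}_L$ (and then $\{\phi(\rho^\lambda)\}_\lambda$), after observing that the estimates of Lemma~\ref{lemma:g}, Remarks~\ref{rem:lambda}--\ref{rem:linfty}, \eqref{eq:phil1bound} and \eqref{eq:bvest} depend only on $C_0,C_1$ and pass to the limit by lower semicontinuity of $TV$ and $\mathtt d_{W_1}$. Your write-up is more detailed than the paper's (which is little more than a sketch), but the strategy and the ingredients are identical.
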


In order to prove Theorem~\ref{thm:compactness2} it is sufficient to observe that the estimates of Lemma~\ref{lemma:g}, Remarks~\ref{rem:lambda} and~\ref{rem:linfty} and the upper bounds~\eqref{eq:phil1bound} and~\eqref{eq:bvest} do not depend on $N\geq\bar N,L$ but only on  $C_0$, $C_1$ as in~\eqref{eq:lambdacond}.  Moreover, by lower semicontinuity of the total variation and of the $1$-Wasserstein distance $\mathtt d_{W_1}$ such estimates  uniformly hold also for the sequences $\rho_L$,  $\phi(\rho_L)$. Hence it is possible to apply Theorem~\ref{thm:aubinlions} to the sequence $\phi(\rho_L)$ on $[0,T] \times \R$ and repeat the previous reasoning obtaining a strong $L^1$ limit $\rho:[0,T]\times\R\to[0,+\infty)$.

If $\rho^\lambda_0$ is a sequence of initial data as in the statement of the Theorem, one can assume w.l.o.g. that 
\begin{align*}
\sup_{N,L,\lambda}\bigl\|\bigl(\rho^\lambda_0\bigr)^N_L\bigr\|_{L^\infty}\leq C_1,\qquad \sup_{N,L,\lambda}\Fcal_L\bigl(\bigl(\rho^\lambda_0\bigr)^N_L\bigr)\leq C_0
\end{align*}
where $C_0$ and $C_1$ are as in~\eqref{eq:lambdacond}.

Therefore, since $C_0$ and $C_1$ are independent of $\lambda$, the estimates for the applicability of Theorem~\ref{thm:aubinlions} to the sequence $\phi((\rho^\lambda)^N_L)$ are independent of $\lambda$, thus giving as above (up to subsequences) strong $L^1$ limits in $[0,T]\times \R$  $\rho^\lambda_L$ (as $N\to+\infty$), $\rho^\lambda$ (as $L\to+\infty$) and $\bar\rho$ (as $\lambda\to+\infty$). 

.

\begin{remark}
Notice that, while in the first limit (namely $L$ fixed and $N\to\infty$) we could have restricted to functions in $X=L^1(\T_L)$ and avoided the last term in the definition~\eqref{eq:Gdef} of $\mathcal G$, in the limit as $L\to\infty$  such a term becomes essential as the supports of the functions $\rho^\lambda_L$ become larger and larger and at the same time $L^1$-compactness of the sublevels of $\mathcal{G}$ is needed.  
	\end{remark}

\subsection{Limit PDEs}

Our first goal is to prove Theorem~\ref{thm:convlfixed}.

In order to do so, we need the following preliminary lemma.

\begin{lemma}
	Let $\rho_{0,L}\in L^1(\T_L)$ be such that
	\begin{equation}
	\label{eq:lblinfty}
	\inf_{\T_L}\rho_{0,L}\geq\eps_L>0
	\end{equation}
	and let $x_0(t)<\dots<x_{N-1}(t)$ be the deterministic particles which evolve according to~\eqref{eq:ode} on a time interval $[0,T]$ starting from $\rho_{0,L}$. Then, for all $k=0,\dots,N-1$ and for all $t\in[0,T]$
	\begin{equation}\label{eq:k+1k}
	x_{k+1}(t)-x_k(t)\leq\frac{1}{N}\Big(\frac{c_L}{\eps_L}+\frac{\|K'\|_{L^\infty}}{\|K''\|_{L^\infty}}\Big)e^{c_L\|K''\|_{L^\infty}t}.
	\end{equation}
\end{lemma}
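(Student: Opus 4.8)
The plan is to track the evolution of the largest gap $d_k(t) := x_{k+1}(t) - x_k(t)$ and derive a differential inequality for it. First I would write, using the ODE~\eqref{eq:ode},
\[
\dot d_k(t) = \dot x_{k+1}(t) - \dot x_k(t) = -\frac{c_L}{N}\sum_{j\neq k+1} K'(x_{k+1}-x_j) + \frac{c_L}{N}\sum_{j\neq k} K'(x_k - x_j) - \frac{N}{c_L}\bigl(\phi(\rho_{k+1}) + \phi(\rho_{k-1}) - 2\phi(\rho_k)\bigr).
\]
The key observation (already used in the proof of Lemma~\ref{lemma:mininterv}) is that when $d_k(t)$ is the \emph{maximal} gap, the corresponding $\rho_k(t) = c_L/(N d_k(t))$ is the \emph{minimal} density, so $\phi(\rho_{k+1}) + \phi(\rho_{k-1}) - 2\phi(\rho_k) \geq 0$ by monotonicity of $\phi$, and this term has the favourable sign (it tends to shrink the maximal gap). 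Hence at a point of maximality,
\[
\dot d_k(t) \leq -\frac{c_L}{N}\sum_{j\neq k+1} K'(x_{k+1}-x_j) + \frac{c_L}{N}\sum_{j\neq k} K'(x_k - x_j).
\]

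Next I would estimate the nonlocal term. Pairing up the sums over the common indices $j \neq k, k+1$, each difference $K'(x_{k+1}-x_j) - K'(x_k - x_j)$ is bounded by $\|K''\|_{L^\infty} d_k(t)$; there are at most $N$ such terms, each carrying weight $c_L/N$, contributing at most $c_L \|K''\|_{L^\infty} d_k(t)$. The two leftover boundary terms (the $j = k$ term absent from the first sum and the $j = k+1$ term absent from the second) are each bounded by $\frac{c_L}{N}\|K'\|_{L^\infty}$, so together at most $\frac{2 c_L}{N}\|K'\|_{L^\infty}$; absorbing constants this is $\les \frac{c_L}{N}\|K'\|_{L^\infty}$. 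Therefore, writing $D(t) := \max_k d_k(t)$, one gets the differential inequality (in the sense of upper Dini derivatives, which suffices here)
\[
\frac{d}{dt} D(t) \leq c_L \|K''\|_{L^\infty}\, D(t) + \frac{2 c_L \|K'\|_{L^\infty}}{N}.
\]
Gronwall's lemma then yields
\[
D(t) \leq \Bigl(D(0) + \frac{2\|K'\|_{L^\infty}}{N\|K''\|_{L^\infty}}\Bigr) e^{c_L \|K''\|_{L^\infty} t} - \frac{2\|K'\|_{L^\infty}}{N\|K''\|_{L^\infty}},
\]
and since $D(0) = \max_k (x_{k+1}(0) - x_k(0)) \leq \frac{c_L}{N\eps_L}$ by the lower bound~\eqref{eq:lblinfty} on $\rho_{0,L}$, dropping the negative term and adjusting the constant factor in front of $\|K'\|_{L^\infty}$ gives exactly~\eqref{eq:k+1k}.

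The main obstacle is purely technical: $D(t)$ is a maximum of finitely many smooth functions, hence only Lipschitz, so one must justify the differential inequality at points where the maximizing index $k$ switches. This is standard — at a.e.\ $t$ the derivative of $D$ equals $\dot d_k$ for some maximizing $k$, or one works with the upper right Dini derivative and notes $D_+ D(t) \leq \dot d_{k^*}(t)$ for any $k^*$ achieving the max — and Gronwall applies in this generality. A minor point to double-check is the sign argument for the diffusion term at a maximal gap: it uses only that $\rho_k$ is minimal there and $\phi$ is monotone increasing, which is~\eqref{eq:phi3}, so no lower bound on the density enters the estimate beyond fixing $D(0)$.
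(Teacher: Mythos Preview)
Your approach is essentially identical to the paper's: compute $\dot x_{k+1}-\dot x_k$ from the ODE, observe that at the maximal gap the diffusion term has the favourable sign by monotonicity of $\phi$, bound the nonlocal contribution by $c_L\|K''\|_{L^\infty}(x_{k+1}-x_k)$ plus an $O(c_L\|K'\|_{L^\infty}/N)$ remainder, and apply Gronwall with the initial bound $D(0)\leq c_L/(N\eps_L)$. Your treatment is in fact slightly more careful than the paper's, both in tracking the factor of $2$ on the leftover $K'$ terms (the paper silently drops it) and in addressing the Dini-derivative issue for the Lipschitz function $D(t)=\max_k d_k(t)$, which the paper handles implicitly via an ODE comparison.
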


\begin{proof} Recalling the ODE~\eqref{eq:ode}, one has that
	\begin{align*}
	\dot x_{k+1}(t)-\dot x_k(t)&=-\frac{c_L}{N}\Big[2K'(x_{k+1}-x_k)-\sum_{j\neq k,k+1}(K'(x_{k+1}-x_j)-K'(x_k-x_j))\Big]\notag\\
	&-\frac{N}{c_L}[\phi(\rho_{k+1})+\phi(\rho_{k-1})-2\phi(\rho_k)]\notag\\
	&\leq c_L\frac{\|K'\|_{L^\infty}}{N}+c_L\|K''\|_{L^\infty}(x_{k+1}(t)-x_k(t)) -\frac{N}{c_L}[\phi(\rho_{k+1})+\phi(\rho_{k-1})-2\phi(\rho_k)]
	\end{align*}
	Let now for $t\in[0,T]$ choose $k$ such that  $x_{k+1}(t)-x_k(t)=\max_j(x_{j+1}(t)-x_j(t))$. Then by monotonicity of $\phi$ and the definition of the deterministic particle approximation one has that $\phi(\rho_{k+1})+\phi(\rho_{k-1})-2\phi(\rho_k)\geq0$, hence
	\begin{equation*}
	\dot x_{k+1}(t)-\dot x_k(t)\leq c_L\frac{\|K'\|_{L^\infty}}{N}+c_L\|K''\|_{L^\infty}(x_{k+1}(t)-x_k(t)).
	\end{equation*}
	Moreover, one has that by assumption
	\begin{equation*}
	x_{k+1}(0)-x_k(0)\leq\frac{c_L}{N\eps_L}.
	\end{equation*} 
	Let us now consider the ODE
	\[
	\dot y(t)=\frac{c_L\|K'\|_{L^\infty}}{N}+c_L\|K''\|_{L^\infty}y(t),\qquad y(0)\leq \frac{c_L}{N\eps_L}.	
	\]
	One has that
	\[
	y(t)\leq\frac1N\Big(\frac{c_L}{\eps_L}+\frac{\|K'\|_{L^\infty}}{\|K''\|_{L^\infty}}\Big)e^{c_L\|K''\|_{L^\infty}t},
	\]
	hence~\eqref{eq:k+1k} is proved.

\end{proof}

\begin{proof}[Proof of Theorem~\ref{thm:convlfixed}:]
	 Let $\rho_{0,L}$ a bounded $L^1$ density strictly bounded from below on $\T_L$ by a constant $\eps_L$ as in the assumptions of the Theorem, and let $c_L=\int_{T_L}\rho_{0,L}\leq1$. W.l.o.g. we can assume that $\rho_{0,L}=(\rho_0)_L$ for some $\rho_0\in L^1(\R)\cap L^\infty(\R)$. In particular, if $N$ is sufficiently large, one has that
\begin{equation*}
\sup_{N}\|\rho^N_L(0)\|_{L^\infty(\T_L)}\leq C_1,\qquad\sup_N\Fcal_L(\rho^N_L(0))\leq C_0,
\end{equation*}
where $\rho^N_L$ is the deterministic particle approximation on $\T_L$ starting from $\rho_{0,L}$ (see~\eqref{eq:lambdacond}). In particular, Theorem~\ref{thm:compactness} guarantees that, up to subsequences, $\rho^N_L$ strongly converges in $L^1([0,T]\times \T_L)$ to a density $\rho_L\in L^1([0,T]\times\T_L)$.

Remember that, by~\eqref{eq:pdeapprox}, for every fixed $N \in \N, L>0$ the piecewise constant function $\rho^N_L(t,x)$ satisfies, for every $\varphi \in C^\infty_c((0,T)\times \T_L)$, the following equation	
	\begin{equation}\label{eq:pde3}
	\int_0^T \int_{\T_L} \rho^N_L(t,x) \partial_t \varphi(t,x) - \rho^N_L(t,x) \big[{K^\lin}'(\rho^N_L) \ast \rho^N_L(t,x) + \Phi(\rho^N_L(t,x)) \big]\partial_x \varphi(t,x) \dx\dt = 0
	\end{equation}
where we defined 
\begin{align*}
\Phi(\rho^N_{L}(t,x)) := \frac{N}{c_L} \sum_{k=0}^{N-1} \chi_{[x_k(t), x_{k+1}(t))} (x) \Bigl[ F_k(t) + \Bigl(\frac{N}{c_L}M_{\rho^N_L}(x) - k\Bigr)(F_{k+1}(t) - F_k(t)) \Bigr]
\end{align*}
and $F_k(t) = \phi (\rho_k(t)) - \phi(\rho_{k-1}(t))$. 

In the following we want to show that as $N\to+\infty$ the equation~\eqref{eq:pde3} gives the weak formulation of the PDE~\eqref{eq:mainPDE} for the density $\rho_L$.

In particular  we will prove  that 	as $N$ tends to $+\infty$
\begin{align}
&\int_0^T \int_{\T_L} \big(\rho^N_L(t,x) - \rho_L(t,x)\big) \partial_t \varphi(t,x) \dx \dt \longrightarrow 0,\label{eq:lim1}\\
&\int_0^T \int_{\T_L} \big((\rho^N_L(t,x){K^\lin}'(\rho^N_L) \ast \rho^N_L(t,x) - \rho_{L}K' \ast \rho_L (t,x) \big) \partial_x \varphi(t,x) \dx \dt \longrightarrow 0,\label{eq:lim2}
\end{align}
and, if as in the assumptions it holds that
\begin{equation*}
\rho_{0,L}\geq\eps_L>0
\end{equation*}
then
\begin{equation}
\lim_{N\to\infty}\int_0^T \int_{\T_L}\rho^N_L(t,x) \Phi(\rho^N_L(t,x)) \partial_x \varphi(t,x) \dx \dt ={- \int_0^T \int_{\T_L} \phi(\rho)\partial_{xx} \varphi(t,x)} \dx \dt.\label{eq:lim3}
\end{equation}

The convergence in~\eqref{eq:lim1} is an immediate consequence of the $L^\infty$ weak* compactness of the densities $\rho^N_L$ implied by Theorem~\ref{thm:existenceapprox}.

 Let us focus on the convergence in~\eqref{eq:lim2}. Simple computations lead to the equivalent expression
\begin{align*}
\int_0^T \int_{\T_L} &\big((\rho^N_L(t,x){K^\lin}'(\rho^N_L) \ast \rho^N_L(t,x) - \rho_LK'\ast \rho_L (t,x) \big) \partial_x \varphi(t,x) \dx \dt = \\
= &  \int_0^T \int_{\T_L} (\rho^N_L(t,x) - \rho_L(t,x)) K' \ast \rho_L(t,x) \partial_x \varphi(t,x) \dx \dt \\
& + \int_0^T \int_{\T_L} \rho^N_L(t,x) K'\ast\big( \rho^N_L(t,x) - \rho_L (t,x)\big) \partial_x \varphi(t,x) \dx \dt \\
& + \int_0^T \int_{\T_L} \rho^N_L(t,x) \big({K^\lin}'(\rho^N_L) - K' \big) \ast \rho^N_L(t,x)  \partial_x \varphi(t,x) \dx \dt.
\end{align*}
	The first and the second term of the r.h.s. of the above converge to $0$ as $N \to \infty$ because of the $L^\infty$ weak* compactness of the $\rho^N_L$.
 On the other hand, observe that
\begin{align*}
&\int_0^T \int_{\T_L}\rho^N_L(t,x) \big({K^\lin}'(\rho^N_L) - K' \big) \ast \rho^N_L(t,x)  \partial_x \varphi(t,x) \dx \dt  \\
=&\int_0^T\sum_{k,\,i=0}^{N-1} \int_{x_k}^{x_{k+1}} \rho_k(t) \rho_i(t) \Bigl[ -\int_{x_i}^{x_{i+1}} K'(x - y) \dy  + (x_{i+1} - x_i) \Big((1 - \chi_{\{0\}}(x_k - x_i))K'(x_k - x_i) \\
&+ Z_k(x) \big( (1 - \chi_{\{0\}}(x_{k+1} - x_i))K'(x_{k+1} - x_i)  -  (1 - \chi_{\{0\}}(x_k - x_i))K'(x_k - x_i) \big) \Big)\Bigr]\partial_x\varphi(x,t) \dx \dt, 
\end{align*}	  
	where we used $Z_k(x)$ to denote the function $\bigl(\frac{N}{c_L}M_{\rho^N_L}(x) - k\bigr)$ and we did not explicit the dependence from the time variable in the points $x_i(t), x_k(t)$ for simplicity of notation.

If $i \neq k, k+1$, recalling that $Z_k(x) \leq 1$, we have the following estimate 
\begin{align}\label{eq:ikkest}
 \int_{x_k}^{x_{k+1}} \rho_{k}(t)& \rho_{i}(t) (x_{i+1} - x_i) \Big| \frac{K(x - x_i) - K(x - x_{i+1})}{x_{i+1} - x_i} - K'(x_k - x_i) \notag\\
& - Z_k(x)(K'(x_{k+1} - x_i) - K'(x_k - x_i)) \Big| \dx \notag \\
 \leq & \frac{3c_L}{N} \int_{x_k}^{x_{k+1}} \rho_{k}(t) \|K'\|_{L^\infty}  \leq   \frac{3c_L^2\|K'\|_{L^\infty}}{N^2}.
\end{align}
If now $i = k+1$, we have that 
\begin{align}\label{eq:ik1est}
& \int_{x_k}^{x_{k+1}} \rho_{k}(t) \rho_{k+1}(t) (x_{k+2} - x_{k+1}) \Big| \frac{K(x - x_{k+1}) - K(x - x_{k+2})}{x_{k+2} - x_{k+1}} - K'(x_k - x_{k+1}) \notag\\
& \qquad \quad + Z_k(x)K'(x_k - x_{k+1})) \Big| \dx \leq \frac{3c_L^2\| K' \|_{L^\infty}}{N^2} .
\end{align}
Finally, for $i=k$ we get 
\begin{equation}\label{eq:ikest}
 \int_{x_k}^{x_{k+1}} \rho^2_{k}(t)  \left|\int_{x_k}^{x_{k+1}} K'(x - y) dy - (x_{k+1} - x_k) Z_k(x) K'(x_{k+1} - x_k)  \right| \dx \leq \frac{c_L^2(1+(x_{k+1}-x_k))}{N^2} \| K'\|_{L^\infty}, 
\end{equation}
and gathering together~\eqref{eq:ikkest},~\eqref{eq:ik1est} and~\eqref{eq:ikest} we conclude that 
\begin{align*}
\int_0^T \int_{\T_L}\rho^N_L(t,x) & \big({K^\lin}'(\rho^N_L) - K' \big) \ast \rho^N_L(t,x)  \partial_x \varphi(t,x) \dx\ dt 
\leq \frac{(7+L)\| \partial _x \varphi \|_{L^\infty}T\|K'\|_{L^\infty}}{N^2}  
\end{align*}
	where we have used that $\sum_k(x_{k+1}-x_k)=L$ and $c_L\leq 1$. Hence  the convergence claimed in~\eqref{eq:lim2} follows.
 
 	Let us now prove~\eqref{eq:lim3}.

 	 One has that 
	\begin{align*}
	&\int_0^T \int_{\T_L}\rho^N_L(t,x) \Phi(\rho^N_L(t,x)) \partial_x \varphi(t,x) \dx \dt= \notag\\
	&=\sum_{k=0}^{N-1} \int_{0}^T\int_{x_k}^{x_{k+1}}\frac{\phi(\rho^N_L(x_{k}))-\phi(\rho^N_L(x_{k-1}))}{x_{k+1}-x_k}\partial_x \varphi(t,x)\dx\dt\notag\\
	&+\sum_{k=0}^{N-1} \int_{0}^T\int_{x_k}^{x_{k+1}}\Big[\frac{\phi(\rho^N_L(x_{k+1}))-\phi(\rho^N_L(x_k))}{x_{k+1}-x_k}-\frac{\phi(\rho^N_L(x_{k}))-\phi(\rho^N_L(x_{k-1}))}{x_{k+1}-x_k}\Big]\Bigl(\frac{N}{c_L} M_{\rho^N_L}(x)-k\Bigr)\partial_x \varphi(t,x)\dx\dt\notag\\
	&=:I^N_1+I^N_2.
	\end{align*}

    We will treat the terms $I^N_1$ and $I^N_2$ separately. As for $I^N_1$, one has that integrating by parts it holds
    \begin{align*}
    I^N_1&=\sum_{k=0}^{N-1} \int_{0}^T\int_{x_k}^{x_{k+1}}\frac{\phi(\rho^N_L(x_{k}))-\phi(\rho^N_L(x_{k-1}))}{x_{k+1}-x_k}\partial_x \varphi(t,x)\dx\dt\notag\\
    &=-\sum_{k=0}^{N-1} \int_{0}^T\int_{x_k}^{x_{k+1}}\bigl[\phi(\rho^N_L(x_{k-1}))+\frac{x-x_k}{x_{k+1}-x_k}\bigl(\phi(\rho^N_L(x_k)))-\phi(\rho^N_L(x_{k-1}))\bigr)\bigr]\partial_{xx}\varphi(t,x)\dx\dt.
    \end{align*}
    Now notice that, since $\phi(\rho^N_L)$ converges in $L^1([0,T]\times\T_L)$ to $\phi(\rho_L)$ then
    \begin{equation*}
    \hat I^N_1:=-\sum_{k=0}^{N-1} \int_{0}^T\int_{x_k}^{x_{k+1}}\phi(\rho^N_L(x_k))\partial_{xx}\varphi(t,x)\dx\dt\quad\longrightarrow\quad-\int_{0}^\infty\int_{T_L}\phi(\rho_L)\partial_{xx}\varphi(t,x)\dx\dt.
    \end{equation*}
    Hence we want to show that $I^N_1-\hat I^N_1\to 0$ as $N\to\infty$. One has the following estimates: if $\mathrm{spt}(\varphi)\subset[0,T]\times\T_L$,
    \begin{align*}
    |I^N_1-\hat I^N_1|&\leq\|\varphi\|_{C^2}\sum_{k=0}^{N-1} \int_{0}^T\int_{x_k}^{x_{k+1}}\Big|\phi(\rho^N_L(x_{k-1}))+\frac{x-x_k}{x_{k+1}-x_k}\bigl(\phi(\rho^N_L(x_k)))-\phi(\rho^N_L(x_{k-1}))\bigr)-\phi(\rho^N_L(x_k))\Big|\dx\dt\notag\\
    &\leq2\|\varphi\|_{C^2}\int_0^T TV(\phi(\rho^N_L)(t))\max_{k}|x_{k+1}(t)-x_k(t)|\dt\notag\\
    &\leq\frac{2\|\varphi\|_{C^2}C(1+T)}{N}\Big(\frac{c_L}{\eps_L}+\frac{\|K'\|_{L^\infty}}{\|K''\|_{L^\infty}}\Big)e^{c_L\|K''\|_{L^\infty}T}
    \end{align*}
    where in the last inequality we have used~\eqref{eq:bvest} and~\eqref{eq:k+1k}. Thus we have the desired convergence.
    
    In order to prove~\eqref{eq:lim3} we are left to show that $I^N_2\to0$ as $N\to+\infty$. One has the following
    \begin{align*}
    I^N_2=\int_0^T\sum_{k=0}^{N-1}(\phi(\rho^N_L(x_{k+1}))-\phi(\rho^N_L(x_{k})))B_k(t)\dt +\int_0^T\sum_{k=0}^{N-1}(\phi(\rho^N_L(x_{k}))-\phi(\rho^N_L(x_{k-1})))B_k(t)\dt,
    \end{align*}
    with
    \[
    B_k(t)=\fint_{[x_k,x_{k+1})}\Big(\frac{N}{c_L}M_{\rho^N_L}(x)-k\Big)\partial_x\varphi(x,t)\dx.
    \]
    Notice then that $B_k=\bar B_k+\hat B_k$, where
    \begin{align*}
    \bar B_k(t)&=\fint_{[x_k,x_{k+1})}\Big(\frac{N}{c_L}M_{\rho^N_L}(x)-k\Big)\partial_x\varphi(x_k,t)\dx\notag\\
    &=\frac12\partial_{x}\varphi(x_k,t),\\
    \hat B_k(t)&=\fint_{[x_k,x_{k+1})}\Big(\frac{N}{c_L}M_{\rho^N_L}(x)-k\Big)(\partial_x\varphi(x,t)-\partial_x\varphi(x_k,t))\dx
    \end{align*}
    and 
    \begin{equation*}
    |\hat B_k(t)|\leq\frac12\|\varphi\|_{C^2}(x_{k+1}(t)-x_k(t)).
    \end{equation*}
    In particular,
    \begin{align}
    \int_0^T\sum_{k=0}^{N-1}(\phi(\rho^N_L(x_{k+1}))-\phi(\rho^N_L(x_{k})))\hat B_k(t)\dt&\leq\int_0^T TV(\phi(\rho^N_L)(t))\|\varphi\|_{C^2}\sup_k(x_{k+1}(t)-x_k(t))\dt\notag\\
    &\leq \frac{C(T,\|\varphi\|_{C^2}, \|K''\|_{L^\infty}, \|K'\|_{L^\infty}, \eps_L)}{N}\quad\longrightarrow\quad0,\label{eq:lastest}
    \end{align}
    where the last inequality follows from~\eqref{eq:bvest} and~\eqref{eq:k+1k}.
    
    On the other hand,
    
    \begin{align*}
    \Big|\int_0^T\sum_{k=0}^{N-1}(\phi(\rho^N_L(x_{k+1}))&+\phi(\rho^N_L(x_{k-1}))-2\phi(\rho^N_L(x_{k})))\bar B_k(t)\dt\Big|=\notag\\
    &=\frac12\Big|\int_0^T\sum_{k=0}^{N-1}\bigl[\phi(\rho^N_L(x_k))-\phi(\rho^N_L(x_{k-1}))\bigr]\bigl[\partial_x\varphi(x_{k-1},t)-\partial_x\varphi(x_{k},t)\bigr]\dt\Big|\notag\\&\leq\frac12\int_0^T TV(\phi(\rho^N_L)(t))\|\varphi\|_{C_2}\sup_k(x_{k+1}(t)-x_k(t))\dt
    \end{align*}
    which converges to $0$ as $N\to+\infty$ by the same estimate as in~\eqref{eq:lastest}.

\end{proof}	  

We now proceed to the proof of Theorem~\ref{thm:convinl}.

\begin{proof}[Proof of Theorem~\ref{thm:convinl}:] Let $\hat \rho_0>0$ a bounded $L^1$ density with unit mass. In particular, for every $L$ there exists $\eps_L>0$ such that
\begin{equation*}
(\hat\rho_0)_L(x)=\hat\rho_0(x)\chi_{[-L/2,L/2)}(x)\geq\eps_L,\qquad\forall\,x\in[-L/2,L/2).
\end{equation*} 
Denote now by ${(\rho_0)}_L$ the $L$-periodic extension of $(\hat\rho_0)_L$ or its corresponding function on $\T_L$. Denoting by $\rho^N_L$ the deterministic particle approximations defined starting from ${(\rho_0)}_L$, we proved in the previous theorem that they converge, up to subsequences, to a bounded $L^1$ solution $\rho_L$ of the following PDE in weak form
\begin{align}\label{eq:varphieq}
\int_0^T \int_{\T_L}  \rho_L(t,x) \partial_t \varphi(t,x) + \rho_{L}K' \ast \rho_L (t,x) \partial_x \varphi(t,x) -\phi(\rho_L)\partial_{xx}\varphi(t,x)\dx \dt=0,
\end{align} 
  where $\varphi\in C^\infty_c((0,T)\times\T_L)$. 
  
  Let now $\hat\psi\in C^\infty_c((0,T)\times\R)$. In particular, there exists $L$ such that 
  \[
  \mathrm{spt}\hat\psi\subset\!\subset [0,T]\times (-\bar L/2,\bar L/2).
  \] 
  For $L\geq\bar L$, denote by ${\psi}$ the $L$-periodic extension of $\hat\psi$. Then, by~\eqref{eq:varphieq} one has that
  \begin{align}\label{eq:psieq}
  \int_0^T \int_{\T_L}  \rho_L(t,x) \partial_t \psi(t,x) + \rho_{L}K' \ast \rho_L (t,x) \partial_x \psi(t,x) -\phi(\rho_L)\partial_{xx}\psi(t,x)\dx \dt=0,
  \end{align} 
  Denoting by ${\rho}_L$ the $L$- periodic function on $\R$ corresponding to $\rho_L$ and setting  $\hat\rho_L={\rho}_L\chi_{[-L/2,L/2)}$ and $\hat\psi={\psi}\chi_{[-L/2,L/2)}$,~\eqref{eq:psieq} rewrites as  
 \begin{align*}
 \int_0^T \int_{\R}  \hat\rho_L(t,x) \partial_t \hat\psi(t,x) + \hat\rho_{L}(t,x)K' \ast \rho_L (t,x) \partial_x \hat\psi(t,x) -\phi(\hat \rho_L)\partial_{xx}\hat\psi(t,x)\dx \dt=0,
 \end{align*}
 
 By Theorem~\ref{thm:compactness2} we know that, up to subsequences, $\hat\rho_L$ converge in $L^1([0,T]\times\R)$ to a bounded $L^1$ density $\rho$ as $L\to\infty$. In particular, as $L\to\infty$,
 
 \begin{equation*}
  \int_0^T \int_{\R}  \bigl(\hat\rho_L(t,x)-\rho(t,x)\bigr) \partial_t \hat\psi(t,x)-\bigl(\phi(\hat\rho_L)-\phi(\rho)\bigr)\partial_{xx}\hat\psi(t,x)\dx \dt\quad\longrightarrow\quad0.
 \end{equation*}
 
 and
 \begin{equation*}
  \int_0^T \int_{\R} \bigl(\hat\rho_{L}(t,x)K' \ast\hat \rho_L (t,x) -\rho(t,x)K'\ast\rho(t,x)\bigr)\partial_x \hat\psi(t,x)\dx\dt\quad\longrightarrow\quad0.
 \end{equation*}
 Thus we are left to show that
 \begin{equation*}
  \int_0^T \int_{\R} \hat\rho_{L}(t,x)K' \ast (\rho_L-\hat{\rho}_L) (t,x)\partial_x \hat\psi(t,x)\dx\dt\quad\longrightarrow\quad0.
 \end{equation*}
 Using the fact that $\|\rho_L\|_{\infty}\leq\gamma_1+\gamma_2 T$ (see Theorem~\ref{thm:existenceapprox}) and  that $K'\in L^1(\R)\cap L^\infty(\R)$ (see~\eqref{eq:k4}), one has that
 \begin{align*}
 \int_0^T \int_{\R} &|\hat\rho_{L}(t,x)|\int_{\R}|K'(x-y)| |\rho_L(t,y)-\hat{\rho}_L(t,y)| \dy|\partial_x \hat\psi(t,x)|\dx\dt\leq\notag\\
 &= \int_0^T \int_{\R} |\hat\rho_{L}(t,x)|\int_{\R\setminus[-L/2,L/2]}|K'(x-y)| |\rho_L(t,y)|\dy|\partial_x \hat\psi(t,x)|\dx\dt\notag\\
 &=J^L_1+J^L_2,
 \end{align*}
 with
 \begin{align*}
 J^L_1&=\int_0^T \int_{[-L/2,L/2]} |\hat\rho_{L}(t,x)|\int_{\R\setminus[-3L/4,3L/4]}|K'(x-y)| |\rho_L(t,y)|\dy|\partial_x \hat\psi(t,x)|\dx\dt,\notag\\
 J^L_2&=\int_0^T \int_{[-L/2,L/2]} |\hat\rho_{L}(t,x)|\int_{[-3L/4,-L/2]\cup[L/2,3L/4]}|K'(x-y)| |\rho_L(t,y)|\dy|\partial_x \hat\psi(t,x)|\dx\dt.
 \end{align*}
 On one hand,
 \begin{equation*}
 J^L_1\leq \|\varphi\|_{C^1}c_L2R\int_{\{|z|>L/4\}}|K'(z)|\dz\quad\longrightarrow\quad0.
 \end{equation*}
 On the other hand,
 \begin{equation*}
 J^L_2\leq \|\varphi\|_{C^1}c_L\|K'\|_{L^\infty}\int_{\{L/4<|y|<L/2\}}\hat \rho_L(y)\dy\quad\longrightarrow\quad0,
 \end{equation*}
 where we used the tightness of the measures $\hat{\rho}_L$ proved in Remark~\ref{rem:lambda}.
 
 Thus the proof of Theorem~\ref{thm:convinl} is concluded.
  
\end{proof}

Let us move to the proof of the last main result of this paper, namely Theorem~\ref{thm:convfinal}.

\begin{proof}
	[Proof of Theorem~\ref{thm:convfinal}: ] Let $\rho_0$ and $\{\rho_0^\lambda\}_{\lambda\in\N}$ as in the statement of the theorem. In particular, if $\lambda$ is sufficiently large we can assume that 
	\begin{align*}
	\sup_{N,L,\lambda}\bigl\|\bigl(\rho^\lambda_0\bigr)^N_L(0)\bigr\|_{L^\infty}\leq C_1,\qquad \sup_{N,L,\lambda}\Fcal_L\bigl(\bigl(\rho^\lambda_0\bigr)^N_L(0)\bigr)\leq C_0.
	\end{align*} 
	By the previous theorem, we have as $N\to+\infty$ and then $L\to+\infty$ in the deterministic particle approximations $(\rho^\lambda)^N_L$ limit densities $\rho^\lambda$ in $L^1(\R)\cap L^\infty(\R)$ which satisfy the PDE in weak form
	\begin{equation}\label{eq:weakfinal}
	\int_0^T\int_{\R}\rho^\lambda\partial_t\varphi+\rho^\lambda K'\ast\rho^\lambda\partial_x\varphi-\phi(\rho^\lambda)\partial_{xx}\varphi\dx\dt=0.
	\end{equation}
	By the second part of Theorem~\ref{thm:compactness2}, the sequence $\{\rho^\lambda\}_{\lambda\in\N}\subset L^1\cap L^\infty([0,T]\times\R)$ admits a (not relabeled) converging subsequence to some density $\rho$ with initial datum $\rho_0$. Since the weak formulation~\eqref{eq:weakfinal} is continuous with respect to strong $L^1$ limits, $\rho$ satisfies as well the PDE~\eqref{eq:mainPDE} in the weak sense and therefore  the theorem is proved. 
\end{proof}


\begin{thebibliography}{10}

    	

    	
    	
    
    	
    	
    	
    	\bibitem{bedrossian} Bedrossian, J.
    	\newblock Global minimizers for free energies of subcritical aggregation equations with degenerate diffusion
    	\newblock {\em Appl. Math. Lett.} 24, no. 11, 1927--1932 (2011).
    	
   
    	
    	
    	
    	\bibitem{BCL} Bertozzi, A.L., Carrillo, J.A. and Laurent, T.
    	\newblock  Blowup in multidimensional aggregation equations with mildly singular interaction kernels.
    	\newblock{\em Nonlinearity} 22, 683--710 (2009).
    	
    	\bibitem{BL} Bertozzi, A.L.  and Laurent, T.
    	\newblock  Finite-time blow-up of solutions of an aggregation equation in $\R^n$.
    	\newblock{\em Comm. Math. Phys.} 274, 717--735 (2007).
    	
    	 	\bibitem{BS} Bertozzi, A.L. and  Slep\v cev, D.
    	\newblock Existence and uniqueness of solutions to an aggregation equation with degenerate diffusion
    	\newblock {\em Comm. Pure. Appl. Anal.} 9, no. 6, 1617--1637 (2010).
    	
    	    
    	
    	\bibitem{BV} Bodnar, M. and Velazquez, J.J.L.
    	\newblock An integro-differential equation arising as a limit ofindividual cell-based models
    	\newblock{\em J. Diff. Eq.} 222, 341--380 (2006). 
    	
    	
    
    	
    	\bibitem{BDF} Burger, M. and Di Francesco, M.
    	\newblock Large time behavior of nonlocal aggregation models with nonlinear diffusion.
    	\newblock{\em Networks and Heterogeneous Media} 3 (4), 749--785 (2008).
    	
    	\bibitem{BDFF} Burger, M., Di Francesco, M.  and  Franek, M.
    	\newblock Stationary states of quadratic diffusion equations with long-range attraction.
    	\newblock{\em Commun. Math. Sci.} 11, 3:709--738 (2012).
    	
    	\bibitem{CCH} Carrillo, J.A., Choi, Y.P. and Hauray, M.
    	\newblock The derivation of swarming models: Mean-field limit and Wasserstein distances.
    	\newblock{\em Collective Dynamics from Bacteria to Crowds.} CISM International Centre for Mechanical Sciences, vol 553. Springer, Vienna. (2014).
    	
    	\bibitem{CCP} Carrillo, J.A., Craig, K. and Patacchini, F.S.
    	\newblock A blob method for diffusion.
    	\newblock{\em Calculus of Variations and Partial Differential Equations} 58, 53 (2019).    	
    	
    	\bibitem{CCY} Carrillo, J.A.,  Craig, K. and Yao, Y.
    	\newblock Aggregation-diffusion equations: dynamics, asymptotics, and singular limits.
    	\newblock{\em to appear in Active Particles} Volume 2, preprint arXiv:1810.03634 (2018).
    	
    	\bibitem{CDFS}  Carrillo, J.A., Di Francesco, M.,  Figalli, A., Laurent, T. and Slep\v{c}ev, D. \newblock Global-in-time weak measure solutions and finite-time aggregation for nonlocal interaction equations.
    	\newblock{\em Duke Math. J.}, 156(2):229--271, (2011).









\bibitem{chayes-kim-yao} Chayes, L., Kim, I. and Yao, Y. 
\newblock An aggregation equation with degenerate diffusion: Qualitative property of solution.
\newblock {\em SIAM Journal on Mathematical Analysis} 45, no. 5, 2295--3018 (2013).

\bibitem{ChTo} Choksi, R., Fetecau, R. C. and Topaloglu, I.
\newblock On  minimizers  of  interaction  functionals  with  competing attractive  and  repulsive  potentials.
\newblock{\em Ann.  Inst.  H.  Poincar\'e   Anal.  Non  Lin\'eaire}32,  no.  6,  1283--1305 (2015).
    	 
    	  	
    	\bibitem{craig} Craig, K. 
    	\newblock Nonconvex gradient flow in the Wasserstein metric and applications to constrained nonlocal interactions. 	
    	\newblock {\em Proc. London Math. Soc.} 114, 60--102 (2017).
    	
     \bibitem{CKY} Craig, K.,  Kim, I.  and Yao, Y.
     \newblock Congested aggregation via Newtonian interaction. \newblock{\em Arch. Rat. Mech. Anal.} 1, 1--67 (2018).
     
     \bibitem{CT} Craig, K and Topaloglu
     \newblock Aggregation-diffusion to constrained interaction: minimizers and gradient flows in the slow diffusion limit.
     \newblock{\em preprint arXiv: 1806.07415} to appear in Annales de l'Insitut Henri Poincare C, Analyse non lin\'eaire. 

  
  	
  	\bibitem{DR1} Daneri, S. and Runa, E.
  	\newblock Exact periodic stripes for a minimizers of a local/non-local interaction
  	functional in general dimension.
  	\newblock{\em Arch. Rat. Mech. Anal.} 231 1, 519--589 (2019).
  	
  	
  	\bibitem{DR2} Daneri, S. and Runa, E.
  	\newblock Pattern formation for a local/nonlocal interaction functional arising in colloidal systems.
  	\newblock{\em SIAM J. Math. Anal.} 52(3), 2531--2560 (2020).
  	
  	\bibitem{DR3} Daneri, S. and Runa, E.
  	\newblock On the symmetry breaking and structure of the miniumizers for a family of local/nonlocal interaction functionals.
  	\newblock {\em Rendiconti Sem. Mat. Univ. Pol. Torino}
  	 77, 2, 33--48 (2019).
  
  	
  	
  	\bibitem{DKR} Daneri, S., Kerschbaum, A. and Runa, E.
  	\newblock One-dimensionality of the minimizers for a diffuse interface generalized antiferromagnetic model in general dimension.
  	\newblock{\em Preprint arXiv:1907.06419 }(2019).
  	
  



  	 

  	
  	
  	\bibitem{DFR} Di Francesco, M., Fagioli, S. and Radici, E.
\newblock Deterministic particle approximation for nonlocal transport equations with nonlinear mobility.
\newblock {\em J. Diff. Eq.} 266, 2830--2868 (2019).



\bibitem{difrancesco-rosini} Di Francesco, M. and Rosini, M.D. 
\newblock Rigorous derivation of nonlinear scalar conservation laws from follow-the-leader type models via many particle limit. 
\newblock {\em Arch. Ration. Mech. Anal.} 217, no. 3, 831--871 (2015).

\bibitem{DFY} Di Francesco, M. and Jaafra, Y.
\newblock Multiple large-time behavior of nonlocal interaction equations with quadratic diffusion 
\newblock{\em Kinetic and Related Models} 12 (2), 303--322 (2019).


\bibitem{fagioli-radici-1}  Fagioli, S. and Radici, E.
\newblock Solutions to aggregation diffusion equations with nonlinear mobility constructed via a deterministic particle approximation.
\newblock  {\em Math. Mod. and Meth. in App. Sci} 28, 1801--1829 (2018).


\bibitem{fagioli-radici-2}  Fagioli, S. and Radici, E.	
\newblock Opinion formation system via deterministic particles approximation.
\newblock to appear on {\em Kinetic and Related Models}, arXiv:2004.04038 (2020).



  	
  	\bibitem{GR} Goldman, M. and Runa, E.
  	\newblock On the optimality of stripes in a variational model with nonlocal interactions.
  	\newblock{\em Calc. Var.} 58, 103 (2019).
  	
  	\bibitem{GT} Gosse, L. and Toscani, G.
  	\newblock Lagrangian Numerical Approximations to One‐Dimensional Convolution‐Diffusion Equations
  \newblock{\em SIAM J. Sci. Comp.} 28(4), 1203--1227 (2006).	
  	


  	
  	
  	
  		
  	\bibitem{kim-yao} Kim, I. and Yao, Y. 
  	\newblock The Patlak-Keller-Segel model and its variations: properties of solutions via maximum principle.
  	\newblock {\em SIAM Journal on Mathematical Analysis} 44, no. 2, 568--602 (2012).
  	
  	\bibitem{L}  Laurent,  T.
  	\newblock Local and global existence for an aggregation equation.
  	\newblock {\em Commun. Partial Differ. Equations} 32(12):1941--1964 (2007).
  	
  	\bibitem{LT}
  	  Li, H. and  Toscani, G.
  	  \newblock Long-time asymptotics of kinetic models of granular flows.
  	  \newblock{\em Arch. Ration. Mech.Anal.} 172(3):407--428 (2004).
  	  
  	  \bibitem{MO} 
  	  Matthes, D. and Osberger, H.
  	  \newblock Convergence of a variational lagrangian scheme for a nonlinear drift diffusion equation.
  	  \newblock{\em ESAIM:Mathematical modelling and numerical analysis}
  	  8,  697--726 (2014).
  	  
  	  \bibitem{MS}
  	  Matthes, D. and S\"ollner, B.
  	  \newblock Convergent Lagrangian Discretization for Drift-Diffusion with Nonlocal Aggregation.
  	  \newblock{\em Innovative Algorithms and Analysis} Springer INdAM Series 16 (2017).
  	
  	
  	
  	
  	
  	
  	
  	\bibitem{piccoli-rossi} Piccoli, B. and Rossi, F. 
  	\newblock Transport equation with nonlocal velocity in Wasserstein spaces: convergence of numerical schemes.
  	\newblock {\em Acta Appl. Math.} 124, no. 1, 73--105 (2013).
  	
    \bibitem{RossiSav} Rossi, R. and Savar\'e G.
    \newblock Tightness, integral equicontinuity and compactness for evolution problems in Banach spaces.
    \newblock{\em Ann. Sc. Norm. Super. Pisa Cl. Sci.} (5) 2, 395--431 (2003).
    
    
    \bibitem{R1} Russo, G.
    \newblock Deterministic diffusion of particles
    \newblock{\em Comm. Pure Appl. Math.} 43, 697--733 (1990).
     
    \bibitem{R2} Russo, G. 
    \newblock A particle method for collisional kinetic equations I. Basic theory and one-dimensional results.
    \newblock{\em J. of Comp. Phys.} 87, 270--300 (1990). 
    

    
    \bibitem{S} Sugiyama, Y.
    \newblock Global existence in sub-critical cases and finite time blow-up in super-critical cases to degenerate Keller-Segel systems.
    \newblock{\em Diff. Integral Equations} 19 (8) 841--876 (2006).	
  	
  	
  
  	
  \end{thebibliography}
\end{document}